\documentclass[11pt]{article}

\usepackage[utf8]{inputenc}
\usepackage[T1]{fontenc}
\usepackage{amsmath,amssymb,amsthm}
\usepackage{mathtools}
\usepackage{geometry}
\usepackage{enumitem}
\usepackage{titlesec}
\usepackage[hidelinks]{hyperref}
\hypersetup{colorlinks=true,linkcolor=blue,citecolor=blue,urlcolor=blue}

\setlist{topsep=3pt,itemsep=1pt,parsep=0pt,partopsep=0pt}
\titlespacing*{\section}{0pt}{2.0ex plus .5ex minus .2ex}{0.8ex}
\titlespacing*{\subsection}{0pt}{1.5ex plus .4ex minus .2ex}{0.6ex}
\titlespacing*{\subsubsection}{0pt}{1.2ex plus .3ex minus .2ex}{0.5ex}
\theoremstyle{plain}
\newtheorem{theorem}{Theorem}[section]
\newtheorem{proposition}[theorem]{Proposition}
\newtheorem{lemma}[theorem]{Lemma}
\newtheorem{corollary}[theorem]{Corollary}
\newtheorem*{theorem*}{Theorem}
\theoremstyle{definition}
\newtheorem{definition}[theorem]{Definition}
\newtheorem{example}[theorem]{Example}
\theoremstyle{remark}
\newtheorem{remark}[theorem]{Remark}

\newcommand{\PP}{\mathbb P}
\newcommand{\CC}{\mathbb C}
\newcommand{\RR}{\mathbb R}
\newcommand{\Sing}{\operatorname{Sing}}

\newcommand{\reg}{\mathrm{reg}}
\newcommand{\Mcal}{\mathcal M}
\newcommand{\F}{\mathcal F}
\newcommand{\Hcal}{\mathcal H}

\title{Singular Levi-flats and holomorphic foliations on projective spaces}

\author{Bruno Sc\'ardua\\
Instituto de Matem\'atica, Universidade Federal do Rio de Janeiro\\
Av. Athos da Silveira Ramos, 149, Centro de Tecnologia, Bloco C\\
Cidade Universit\'aria, Rio de Janeiro, RJ 21941-909, Brazil\\
\texttt{bruno.scardua@gmail.com}}

\date{}

\begin{document}

\maketitle

\begin{abstract}
We study codimension-one holomorphic foliations admitting invariant singular real-analytic Levi-flat varieties. Our main problem is to determine when the local meromorphic first integrals furnished by the Levi-flat geometry can be continued coherently and globalized. We introduce a pencil-singularity condition and a projective compatibility condition for local meromorphic parameters. In the projective plane these yield a characterization of foliations admitting a primitive rational first integral, while in higher-dimensional projective space they give a rational integrability criterion under a generic local-product hypothesis along the dicritical locus. We also obtain semi-global and Stein versions of these results. In a different direction, we study foliations admitting infinitely many invariant Levi-flat varieties and prove a Darboux-type alternative: either the foliation admits a rational first integral, or infinitely many Levi-flats contain a common invariant algebraic curve whose virtual holonomy is abelian. Under additional assumptions on the singularities, this leads to the existence of a closed rational, and in the absence of saddle-nodes a closed logarithmic, defining \(1\)-form.\end{abstract}

\tableofcontents
\medskip
\noindent\textbf{Keywords.}
Holomorphic foliation; Levi-flat variety; meromorphic first
integral; rational first integral; dicritical singularity; local product
singularity.

\medskip
\noindent\textbf{MSC 2020.}
32S65, 32V40, 37F75, 34M45.

\section{Introduction}
\label{sec:introduction}

 Let $X$ be a connected complex manifold and
let
\(
f:X\longrightarrow \mathbb C
\)
be a nonconstant holomorphic function. Write
\(
f=u+iv,
\qquad
u=\operatorname{Re}(f),
\qquad
v=\operatorname{Im}(f).
\)
If $c\in\mathbb R$ is a regular value of $u$, then
\(
M=\{u=c\}
\)
is a smooth real hypersurface of $X$. Since $u$ is pluriharmonic,
\(
\partial\bar\partial u=0,
\)
the Levi form of $M$ vanishes identically. Hence $M$ is {\it Levi-flat}.

At the same time, the holomorphic function $f$ defines, on its regular
locus, a codimension-one holomorphic foliation by level complex hypersurfaces
\(
\mathcal F=\{f=\text{\rm constant}\}.
\)
The hypersurface $M$ is invariant by $\mathcal F$, and is therefore equipped with a transversely analytic real codimension one foliation by complex immersed submanifolds, more precisely  the restriction of $\mathcal F$ to $M$. There are more sophisticated examples of nonsingular real hypersurfaces invariant  by nonsingular complex codimension one holomorphic foliations,  but the interesting case appears when we allow both, $M$ and the foliation $\mathcal F$, to have singularities. Real logarithmic
foliations are an interesting class of examples (cf. Section~\ref{sec:Examples}). In this paper we look for  the converse of these two examples. More precisely, we look for sufficient (or even equivalent) conditions for a complex foliation admitting Levi-flat varieties to admit a rational first integral or 
to be given by a closed rational 1-form. For this we first recall some previous works that we shall use. 
The first we mention is due to Cerveau-Lins Neto where 
the authors prove that  a germ of codimension-one holomorphic foliation at the origin $0\in \mathbb C^n, n \geq 2$ which is tangent to a germ
of real-analytic Levi-flat hypersurface admits a nonconstant
meromorphic first integral; see \cite{CLN}. Brunella subsequently gave
a geometric proof and established an inspiring further discussion in \cite{Brunella}. 
This is the local material we need. 

Let $\F$ be a codimension one holomorphic foliation on $X$ and $p\in \Sing(\F)$ a singularity of $\F$ admitting a meromorphic first integral. Assuming that  $p$ is dicritical  choose such a minimal first integral
\(
F_p=\frac{f}{g}:U_p\dashrightarrow\PP^1
\)
and write
\(
V_\lambda=\{f-\lambda g=0\},\qquad \lambda\in\PP^1,
\)
for its local pencil.

Below we find one of the main notions we introduce in this paper.

\begin{definition}[Pencil singularity]
Let $M$ be a real-analytic hypersurface invariant by $\F$ and satisfying
$\Sing(M)\subset\Sing(\F)$. Let
\(
p\in M\cap\Sing(\F)
\)
be a dicritical singularity, and let
\(
F_p=\frac{f}{g}:U_p\dashrightarrow\PP^1
\)
be a minimal local meromorphic first integral of the germ $(\F,p)$.
We say that $p$ is a \emph{pencil singularity for $M$} if:
\begin{enumerate}[label=\textup{(\roman*)}]
\item
\[
p\in\overline L
\]
for every Levi leaf $L\subset M_{\reg}$;
\item for every generic global Levi leaf $L\subset M_{\reg}$, the
restriction of $F_p$ to $L\cap U_p$ is constant (where the
restriction is understood off the indeterminacy set of $F_p$).
\end{enumerate}
Equivalently, for every generic global Levi leaf $L$ there exists a
unique $\lambda(L)\in\PP^1$ such that every local component of
$L\cap U_p$ sufficiently near $p$ is contained in
\[
V_{\lambda(L)}=\{f-\lambda(L)g=0\}.
\]
This property is independent of the chosen minimal first integral,
since two minimal generators differ by a M\"obius transformation.
\end{definition}

With this notion we obtain  the two-dimensional characterization that motivates the
rest of the paper:

\begin{theorem}[Characterization in the projective plane]
\label{thm:plane-characterization}
Let $\F$ be a holomorphic foliation on $\PP^2$. The following
conditions are equivalent.

\begin{enumerate}[label=\textup{(\roman*)}]
\item $\F$ admits a primitive nonconstant rational first integral
\[
R:\PP^2\dashrightarrow\PP^1.
\]

\item There exists an irreducible real-analytic variety of pure real codimension one 
\(
M\subset\PP^2
\)
invariant by $\F$, with
\(
\Sing(M)\subset\Sing(\F),
\)
and all the dicritical singularities
\(
p_1,\ldots,p_r\in M\cap\Sing(\F)
\)
of $\F$ on $M$, together with local meromorphic first integrals
\[
H_j:(U_j,p_j)\dashrightarrow\PP^1,
\qquad j=1,\ldots,r,
\]
such that every Levi leaf $L\subset M_{\reg}$ accumulates at least
one of the points $p_j$, and:
\begin{enumerate}[label=\textup{(\alph*)}]
\item for every generic global Levi leaf $L\subset M_{\reg}$ approaching
$p_j$, the restriction $H_j|_{L\cap U_j}$ is constant; denote its value
by $\lambda_j(L)$;
\item there exist transformations
\[
A_{ji}\in\operatorname{PGL}(2,\CC)
\]
satisfying
\[
A_{ii}=\operatorname{id},
\qquad
A_{kj}\circ A_{ji}=A_{ki}
\]
whenever the corresponding continuations are defined;
\item whenever a generic global Levi leaf $L$ accumulates at both
$p_i$ and $p_j$, one has
\[
\lambda_j(L)=A_{ji}\bigl(\lambda_i(L)\bigr).
\]
\end{enumerate}
\end{enumerate}
\end{theorem}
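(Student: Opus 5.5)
The two implications are quite different in nature, so I treat them separately.

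For (i)$\Rightarrow$(ii), take the primitive rational first integral $R=P/Q$ and a real-analytic curve $\gamma\subset\PP^1$, for instance a generic circle or line $\{\operatorname{Im}\lambda=0\}$ in a chart. Set $M=\overline{R^{-1}(\gamma)}$, the closure taken off the indeterminacy set $I(R)$. I would do three things here.
\begin{enumerate}[label=(\alph*)]
\item Show that $M$ is a real-analytic variety of pure real codimension one, invariant by $\F$. This is clear off $I(R)$; at points of $I(R)$ one uses that $M=\{\,\phi(P,Q)=0\,\}$ for a real bihomogeneous polynomial $\phi$ defining $\gamma$.
\item Show that $M$ is irreducible and that $\Sing(M)\subset\Sing(\F)$. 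The singular set of $M$ lies in $I(R)\cup\{dR=0\}$. Choosing $\gamma$ to avoid the finitely many critical values of $R$, primitivity gives irreducible generic fibres, and $\Sing(M)$ is then contained in $I(R)\subset\Sing(\F)$.
\item Identify the remaining data. The dicritical points of $\F$ on $M$ are the points of $I(R)$. Near each $p_j$ a minimal local first integral $H_j$ differs from $R$ by a Möbius map $B_j$, because $R$ is primitive and the generic fibre of $R$ near $p_j$ is locally irreducible there after composing with the Stein factorization. Each Levi leaf is an open piece of a fibre $R^{-1}(\lambda)$, whose closure passes through every point of $I(R)$. Putting $A_{ji}=B_j\circ B_i^{-1}$ gives (a), (b) and (c).
\end{enumerate}
The delicate point in (c) is the local irreducibility of the germs of $H_j$ versus those of $R$. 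If needed, I would take $H_j:=R|_{U_j}$ directly, since the statement only asks for local meromorphic first integrals and not minimal ones; then every $A_{ji}$ is the identity.

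For (ii)$\Rightarrow$(i), the plan is to build a global meromorphic first integral on a neighbourhood of $M$ and then extend it.
\begin{enumerate}[label=(\arabic*)]
\item Fix $i=1$ and define $\Phi$ on $M_{\reg}$ by $\Phi(L)=A_{1j}(\lambda_j(L))$ for any $p_j$ that $L$ accumulates. By the cocycle relation (b) and the compatibility (c), this does not depend on $j$. By (a), $\Phi$ is constant on generic leaves, so it gives a map from the generic leaf space of $M_{\reg}$ to $\PP^1$.
\item On $U_1$ we have $\Phi=H_1$. The leaves of $\F$ through $M$ are the complexifications of the Levi leaves, and these leaves cover a neighbourhood of $M$ together with the $U_j$. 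So $H_1$, continued along leaves via the $A_{1j}\circ H_j$, gives a meromorphic first integral on the open set $W=\bigcup_j U_j\cup\bigl(\text{saturation of } M\bigr)$.
\item The key step, and the main obstacle, is showing that this continuation is single-valued and meromorphic on an open set containing a real hypersurface. The difficulty is the multivalued holonomy along leaves away from the $p_j$. I would argue as follows. Any two $H_j$ agree, via the $A$'s, on the open sets of generic leaves. Since $M$ is a real hypersurface, it is a uniqueness set for holomorphic continuation in the transverse variable. Hence monodromy around loops in $M_{\reg}$ acts trivially.
\item Once a meromorphic first integral exists on a connected open set meeting every leaf of a real hypersurface, the saturation is large and contains a nonempty open set in which $\F$ has infinitely many closed leaves. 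At that point I would invoke the Darboux–Jouanolou theorem, or Levi extension (Hartogs-type extension of meromorphic functions across pseudoconcave complements in $\PP^2$), to conclude that $\F$ has a rational first integral.
\item Finally, pass to a primitive rational first integral via Stein factorization.
\end{enumerate}
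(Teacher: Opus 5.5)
\textbf{Direction (i)$\Rightarrow$(ii).} You end up with the paper's choice: $M=\overline{R^{-1}(\gamma)}$, $H_j:=R|_{U_j}$, $A_{ji}=\mathrm{id}$. The fallback to $H_j:=R|_{U_j}$ is forced, not optional, because global primitivity does not give local primitivity at base points. For example, $R=y^2(z+x)/(x^2z)$ has irreducible generic fibres, yet near $[0:0:1]$ one has $R=\bigl(y\sqrt{1+x}/x\bigr)^2$. So a minimal germ there takes two values on each generic Levi leaf, and (a) fails.

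\textbf{The gap: step (4) of (ii)$\Rightarrow$(i).} This is where all the global content sits, and what you wrote does not prove it.
\begin{itemize}
\item A meromorphic first integral on ``a connected open set meeting every leaf'' makes no leaf closed, and ``the saturation is large'' does not imply anything.
\item Darboux--Jouanolou needs infinitely many invariant \emph{algebraic} curves, not leaves that are closed inside some open set.
\item The Hartogs alternative is only named. The paper's globalization needs three ingredients, none of which appears in your plan:
\begin{enumerate}
\item the first integral on a full neighbourhood of $M$: through the $p_j$ via $A_{1j}\circ H_j$, and through the non-dicritical points $q\in M\cap\Sing(\F)$ via a local holomorphic integral $H_q$, using $H_q^{-1}(H_q(q))\subset M$ to write $G=\varphi\circ H_q$ (Lemma~\ref{lem:nondicritical});
\item local pseudoconvexity of each component $\Omega$ of $\PP^2\setminus M$, also at singular points of $M$ (Proposition~\ref{prop:pseudoconvex}), so that $\Omega$ is Stein by Takeuchi;
\item Hartogs extension from a boundary collar of $\Omega$ (Lemma~\ref{lem:boundary-hartogs}).
\end{enumerate}
\end{itemize}
Separately, in your step (2) the leaves through $M$ do not cover a neighbourhood: by invariance they stay in $M$. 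The neighbourhood comes from holonomy transport of the $H_j$ to transversals $\Sigma_x$, $x\in M_{\reg}$, as in Proposition~\ref{prop:nearMreg}. Your step (3) then makes this single-valued correctly.

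\textbf{How to repair the Jouanolou route.} It can be made to work, and it would bypass items 1--3, but the mechanism differs from what you wrote.
\begin{itemize}
\item Let $G$ be the single-valued integral near $M\setminus\Sing(\F)$. This set lies in $M_{\reg}$ because $\Sing(M)\subset\Sing(\F)$.
\item For a Levi leaf $L$, $G|_L\equiv c$ and $\overline L\subset M$.
\item At $x\in\overline L\setminus\Sing(\F)$, write $G=\phi(t)$ in a flow box with $\phi$ nonconstant. Continuity gives $\phi(t(x))=c$. Shrink the box so that $t(x)$ is the only solution of $\phi=c$. Then $L$ meets the box only in the plaque through $x$, so $x\in L$.
\item Hence $L$ is closed in $\PP^2\setminus\Sing(\F)$. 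Since $\Sing(\F)$ is finite, Remmert--Stein and Chow make $\overline L$ an invariant algebraic curve.
\item Distinct leaves give distinct curves, so infinitely many Levi leaves give infinitely many invariant algebraic curves, and Jouanolou applies.
\end{itemize}
The inputs your step (4) omits are exactly the compactness of $M$, the hypothesis $\Sing(M)\subset\Sing(\F)$, and the finiteness of $\Sing(\F)$.
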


A particularly simple consequence is the typical situation, when we have a single dicritical singularity.

\begin{corollary}[A single dicritical singularity]
\label{cor:single-dicritical-plane}
Let $\F$ be a codimension-one holomorphic foliation on $\PP^2$
preserving an irreducible real-analytic variety of pure real codimension one  $M$. Assume
\[
\Sing(M)\subset\Sing(\F).
\]
Suppose that $p\in M\cap\Sing(\F)$ is the only dicritical
singularity of $\F$ on $M$ and that $p$ is a pencil singularity for $M$.
Then $\F$ admits a nonconstant rational first integral on $\PP^2$.
\end{corollary}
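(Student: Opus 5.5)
The plan is to deduce the corollary from Theorem~\ref{thm:plane-characterization} by verifying condition (ii) with $r=1$. Take $p_1=p$, and let $H_1=F_p=f/g$ be a minimal local meromorphic first integral of $(\F,p)$ on a neighbourhood $U_1=U_p$.

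First, the hypotheses of (ii) are checked directly. $M$ is irreducible, real-analytic, of pure real codimension one, invariant by $\F$, and satisfies $\Sing(M)\subset\Sing(\F)$. By hypothesis, $p$ is the only dicritical singularity of $\F$ on $M$, so the list $p_1,\ldots,p_r$ consists of $p$ alone.

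Next, each clause of (ii) follows from the pencil-singularity hypothesis or holds trivially:
\begin{enumerate}[label=\textup{(\alph*)}]
\item Condition (i) in the definition of a pencil singularity says that $p\in\overline L$ for every Levi leaf $L\subset M_{\reg}$. So every Levi leaf accumulates at $p_1$, which is the accumulation requirement of (ii). Condition (ii) of that definition says that $F_p|_{L\cap U_p}$ is constant for every generic global Levi leaf $L$, which is exactly clause (a).
\item With only one point, the cocycle is forced: $A_{11}=\operatorname{id}$, and the relation $A_{kj}\circ A_{ji}=A_{ki}$ reduces to $\operatorname{id}\circ\operatorname{id}=\operatorname{id}$.
\item The only pair is $i=j=1$, where the required identity is $\lambda_1(L)=\lambda_1(L)$.
\end{enumerate}
Applying the implication (ii)$\Rightarrow$(i) of Theorem~\ref{thm:plane-characterization} then gives a primitive, hence nonconstant, rational first integral $R:\PP^2\dashrightarrow\PP^1$.

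The main point to watch is whether the theorem's statement implicitly requires all dicritical singularities of $\F$ in $\PP^2$, and not just those on $M$. Read literally, (ii) refers only to the dicritical singularities of $\F$ on $M$, and the hypothesis of the corollary matches this exactly. If the theorem is meant more broadly, the fallback is to redo its globalization argument in this special case:
\begin{enumerate}
\item Continue $F_p$ along each Levi leaf $L$ by declaring it equal to $\lambda(L)$. This is well defined because there is no second dicritical point, so no compatibility issue arises.
\item This produces a function on $M_{\reg}$ that is constant on leaves and meromorphic transversally.
\item Extend it to a neighbourhood of $M$ by invariance.
\item Use Levi's extension theorem, or the algebraicity of $\F$-invariant hypersurfaces accumulating on a dicritical point in $\PP^2$, to obtain a rational function. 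One route for this step is to show that infinitely many leaves are algebraic and then apply Jouanolou/Darboux.
\end{enumerate}
This fallback would be the real obstacle. The primary route, however, is the direct application of the theorem.
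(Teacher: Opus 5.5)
Your proposal is correct and is essentially the paper's argument. The paper proves this as the case $r=1$ of Corollary~\ref{cor:finite-dicritical-plane}, and the implication (ii)$\Rightarrow$(i) of Theorem~\ref{thm:plane-characterization} is itself proved by invoking that corollary, so your check of the accumulation requirement and of clauses (a)--(c) from the two pencil-singularity conditions is exactly what is needed; the fallback sketch is unnecessary, and its last step differs from the paper's globalization, which uses local pseudoconvexity, Takeuchi's theorem and Hartogs extension on the complementary components.
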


The higher-dimensional projective criterion is the following.

\begin{theorem}[Projective integrability criterion]
\label{thm:projective-main}
Let $\F$ be a codimension-one holomorphic foliation on $\PP^n$,
$n\ge3$, and let $M\subset\PP^n$ be an irreducible real-analytic variety of pure real codimension one invariant by $\F$. Assume:

\begin{enumerate}[label=\textup{(\roman*)}]
\item
\(
\Sing(M)\subset\Sing(\F);
\)

\item all dicritical singularities of $\F$ contained in $M$ belong to
an irreducible codimension-two analytic subset
\(
D\subset M\cap\Sing(\F),
\)
and $D$ is generically of local product type;

\item there exists a pencil singularity for $M$ in the local-product
locus of $D$;
\end{enumerate}

Then $\F$ admits a nonconstant rational first integral
\[
R:\PP^n\dashrightarrow\PP^1.
\]
\end{theorem}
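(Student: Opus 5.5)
The plan is to reduce to the existence of infinitely many algebraic invariant hypersurfaces, and then apply the Darboux–Jouanolou theorem: a codimension-one foliation on $\PP^n$ with infinitely many invariant algebraic hypersurfaces admits a rational first integral. First, fix a pencil singularity $p$ in the local-product locus of $D$. Near $p$ the germ $(\F,p)$ is, after a local biholomorphism, the pull-back of a dicritical planar germ $(\mathcal G,0)$ under a projection $(\CC^n,p)\to(\CC^2,0)$, with $D$ corresponding to $\{0\}\times\CC^{n-2}$. The minimal meromorphic first integral $F_p=f/g$ of $\F$ at $p$ is the pull-back of a minimal first integral of $\mathcal G$, whose existence follows from Cerveau--Lins Neto \cite{CLN}. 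By the local-product structure, this first integral extends along the local-product part of $D$. Condition (ii) of the definition then fixes, for every generic global Levi leaf $L\subset M_{\reg}$, a value $\lambda(L)$ with $L\cap U_p\subset V_{\lambda(L)}$.

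Next, use the pencil-singularity hypothesis to show that the Levi leaves are algebraic. Each generic $L$ accumulates $p$, and near $p$ it coincides with an open piece of the analytic hypersurface germ $V_{\lambda(L)}$. I would show that the closure $\overline L$ is a complex-analytic subset of $\PP^n$ of pure dimension $n-1$. The leaf $L$ is a leaf of $\F$, and $\F$ restricted to a neighbourhood of $M$ is tangent to the Levi foliation of $M$. The issue is to control the accumulation of $L$ away from $p$. Here I would use the fact that all dicritical points of $\F$ on $M$ lie in $D$, and that along the local-product locus the local first integral is constant on $L$. So near any point of $D$ to which $L$ accumulates (generically of product type), $L$ lies in a single fibre of a local meromorphic first integral. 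Near nondicritical singularities on $M$, the leaves cannot accumulate in a nonanalytic way. Indeed, the Levi foliation of $M$ near such points has a local holomorphic first integral, by Cerveau--Lins Neto and Brunella \cite{Brunella}, whose fibres are finitely many separatrix-type hypersurfaces. Thus $\overline L$ is locally analytic everywhere, with $\Sing(M)\subset\Sing(\F)$ handling the singular points of $M$. By Remmert--Stein and Chow's theorem, $\overline L$ is then an algebraic hypersurface invariant by $\F$.

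Finally, the map $L\mapsto\lambda(L)$ takes infinitely many values, since distinct generic Levi leaves give disjoint pieces of distinct fibres $V_\lambda$, and there are uncountably many Levi leaves. Hence we obtain infinitely many distinct $\F$-invariant algebraic hypersurfaces, and Jouanolou's theorem gives a rational first integral $R:\PP^n\dashrightarrow\PP^1$, which is nonconstant.

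The main obstacle is the second step: proving that $\overline L$ is analytic near points of $M$ far from $p$, in particular near the non-product-type points of $D$ and near codimension-$\ge3$ singular sets. The codimension-two set of non-product points can be handled by Remmert--Stein, since analyticity off a set of codimension $\ge 2$ inside a hypersurface closure suffices. The real difficulty is excluding recurrence of $L$ in $M_{\reg}$, which would make $\overline L$ a real hypersurface. To handle this I would first try to show that the Levi foliation of $M$ has trivial holonomy. The value $\lambda(L)$ gives a continuous transverse parameter that is locally injective near $p$. Since every leaf passes near $p$, this parameter extends to a global transverse first integral of the Levi foliation on $M_{\reg}$. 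That prevents recurrence and makes the leaves closed in $M_{\reg}$.
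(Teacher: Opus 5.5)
Your proposal is correct, but it follows a genuinely different route from the paper's.

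\textbf{The paper's route.} It builds the holonomy-continued first integral $G$ near $M_{\reg}$ (Proposition~\ref{prop:nearMreg}). It uses the local-product structure and Lemma~\ref{lem:monodromy} to normalize the parameters along $D^\circ$. It then restricts to a generic plane $P\simeq\PP^2$ that misses $E_D$, extends $G_P$ across $P\cap D$ and across the nondicritical points, and globalizes on $P$ via Takeuchi's theorem and Hartogs extension on the Stein components of $P\setminus M_P$. Finally it lifts to $\PP^n$ with the Cerveau--Lins Neto extension theorem.

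\textbf{Your route.} You use the pencil condition to show that Levi leaves are algebraic, and then invoke Jouanolou. Your crux should be stated as follows. For $x\in M\setminus\Sing(\F)$ (which lies in $M_{\reg}$ because $\Sing(M)\subset\Sing(\F)$), take a path in $L_x$ to a regular point $y\in U_p$. For a generic leaf $L$, the holonomy $h_\gamma$ sends each plaque of $L$ in a flow box at $x$ to a plaque on which $F_p\equiv\lambda(L)$. So the transverse values of these plaques lie in one fibre of the nonconstant one-variable meromorphic function $F_p\circ h_\gamma$, and that fibre is discrete. Hence $L$ has locally finitely many plaques and cannot accumulate at $x$, neither on itself nor on another leaf. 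It is discreteness of fibres, not continuity or local injectivity of $\lambda$, that does the work; this is exactly the mechanism of Proposition~\ref{prop:nearMreg}.

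\textbf{Simplifications to your write-up.} Since $\overline L\subset M$, the leaf $L$ is a closed hypersurface of $\PP^n\setminus\Sing(\F)$. Remmert--Stein then applies at once across all of $\Sing(\F)$, which has dimension at most $n-2$. So your separate local analysis near $D$ and near nondicritical points is unnecessary. It is also inaccurate as written: only the fibre of $H_q$ through $q$ consists of separatrices. Infinitude is easier than you state: $L=\overline L\setminus\Sing(\F)$, so distinct leaves give distinct invariant hypersurfaces, and injectivity of $L\mapsto\lambda(L)$ is not needed.

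\textbf{What each approach buys.} Yours uses only $\Sing(M)\subset\Sing(\F)$ and the pencil singularity. Hypothesis (ii) is never used: no local product type, no exceptional locus, no monodromy lemma, and no plane section, pseudoconvexity or Cerveau--Lins Neto step. The argument also works verbatim on $\PP^2$, so it would give Corollary~\ref{cor:single-dicritical-plane} without the assumption that $p$ is the only dicritical point. The paper's approach produces the rational first integral as the continuation of $F_p$ itself. It also shares its machinery with the semi-global and Stein theorems, where Chow's theorem and Jouanolou's theorem are unavailable.
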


The local product hypothesis is discussed in
Section~\ref{sec:dicritical-component}. It includes the Kupka case and,
on the generic local-product locus of an irreducible component, fixes
the transverse analytic type; see Lins Neto \cite{LNproduct}. Notice
that Theorem~\ref{thm:projective-main} does not require the
no-transverse-degree-jump condition introduced later for the semi-global
statement: its proof uses a generic linear two-plane avoiding the
exceptional set $D\setminus D^\circ$.

A useful concrete criterion for the last hypothesis of Theorem~\ref{thm:projective-main} comes
from a real transverse first integral (cf. Definition~\ref{definition:transvseparation}).

\begin{corollary}[Local transverse separation criterion]
\label{cor:local-separation}
Let $\F$ be a codimension-one holomorphic foliation on $\PP^n$,
$n\ge3$, and let $M\subset\PP^n$ be an irreducible real-analytic variety of pure real codimension one invariant by $\F$. Assume that
\[
\Sing(M)\subset\Sing(\F),
\]
that all dicritical singularities of $\F$ contained in $M$ belong to
an irreducible codimension-two analytic subset
\[
D\subset M\cap\Sing(\F)
\]
which is generically of local product type. Assume that each Levi leaf
$L\subset M_{\reg}$ accumulates at some point in the local-product
locus of $D$. Fix a point $p$ in this locus and let
\[
F_p:U_p\dashrightarrow\PP^1
\]
be a minimal local meromorphic first integral of the germ $(\F,p)$.
Suppose that, after shrinking $U_p$, there exist a real-analytic
one-dimensional manifold $S$ and a real-analytic map
\[
h:M_{\reg}\cap U_p\longrightarrow S
\]
which separates the nearby Levi leaves at $p$ and has the following
global-leaf compatibility property: for every generic global Levi leaf
$L\subset M_{\reg}$, the restriction $h|_{L\cap U_p}$ is constant.
Then $p$ is a pencil singularity for $M$, and consequently $\F$ admits a
nonconstant rational first integral on $\PP^n$.
\end{corollary}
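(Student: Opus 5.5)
The strategy is to check the two defining conditions of a pencil singularity at $p$ for $M$, and then appeal to Theorem~\ref{thm:projective-main}. Hypotheses (i) and (ii) of that theorem are exactly the ones assumed here. So the only thing left to prove is hypothesis (iii): that $p$ is a pencil singularity for $M$.

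\textbf{Step 1: condition (i) of the definition.} We must show $p\in\overline L$ for every Levi leaf $L$. The statement only gives that each Levi leaf accumulates at \emph{some} point of the local-product locus $D^\circ$ of $D$, so we transport this to the fixed point $p$. Along $D^\circ$ the transverse type of $\F$ is constant (Lins Neto \cite{LNproduct}). Hence near any $q\in D^\circ$ the set $M$ is a product of the two-dimensional transverse Levi-flat germ with a germ of $D^\circ$. Since $D$ is irreducible, $D^\circ$ is connected. The set of points of $D^\circ$ accumulated by a given leaf $L$ is therefore open, because of the product structure, and closed in $D^\circ$. It follows that $L$ accumulates at every point of $D^\circ$, and in particular at $p$. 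If the intended reading is instead that every leaf accumulates at the given $p$, this step is immediate.

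\textbf{Step 2: condition (ii) of the definition.} Let $L$ be a generic global Levi leaf. By the compatibility hypothesis, $h$ is constant on $L\cap U_p$, say with value $s(L)$. Since $h$ separates the nearby Levi leaves at $p$, the fibre $h^{-1}(s(L))$ near $p$ consists of a single local Levi leaf $\ell$. So every local component of $L\cap U_p$ lies in $\ell$. Because $M$ is invariant by $\F$, $\ell$ is an open piece of a leaf of $\F|_{U_p}$. Any leaf of $\F$ near a dicritical point on which $F_p$ is defined is contained in a fibre $V_\lambda=\{f-\lambda g=0\}$ of the minimal first integral. Hence $F_p|_{L\cap U_p}\equiv\lambda(L)$ off the indeterminacy set, which is condition (ii).

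\textbf{Conclusion and main obstacle.} Steps 1 and 2 show that $p$ is a pencil singularity lying in the local-product locus of $D$. Theorem~\ref{thm:projective-main} then produces the rational first integral $R:\PP^n\dashrightarrow\PP^1$. I expect Step 1, the propagation of accumulation along $D^\circ$, to be the only delicate point. If the product structure of $M$ near $D^\circ$ is awkward to justify directly, I would first try restricting to a generic transverse two-plane section through nearby points of $D^\circ$ and using the local triviality of $(\F,M)$ along $D^\circ$. A secondary subtlety is in Step 2: checking that ``separates nearby Levi leaves'' yields connected fibres near $p$, so that distinct local components of $L\cap U_p$ cannot land in distinct fibres $V_\lambda$.
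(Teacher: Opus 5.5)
Your proposal is correct and follows the paper's proof essentially step for step: your Step~1 is exactly the open--closed argument of Lemma~\ref{lem:product-propagation}, which the paper simply cites, and your Step~2 is the paper's uniqueness argument, which the paper phrases by factoring $h=\eta\circ F_p$ with $\eta$ injective and deducing $\lambda_1=\lambda_2$ from $\eta(\lambda_1)=\eta(\lambda_2)$. Your direct version (the fibre of $h$ over $s(L)$ is a single connected local leaf, hence lies in one $V_\lambda$) is slightly cleaner, since it avoids using connectedness of the generic fibres of $F_p$ to define $\eta$, and the ``secondary subtlety'' you flag is already settled by the definition of separation.
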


In Section~\ref{sec:Examples} we present examples illustrating
the hypotheses of Theorem~\ref{thm:plane-characterization},
Theorem~\ref{thm:projective-main}, and
Corollary~\ref{cor:local-separation}.

Our results are related to the projective and local studies of
Fern\'andez-P\'erez, Mol and Rosas \cite{FMRdyn,FMRsing,FMRchow}. In
particular, the Chow-type theorem of \cite{FMRchow} assumes that the
Levi leaves are contained in algebraic hypersurfaces, whereas our
hypotheses are formulated in terms of the dicritical geometry and the
compatibility of local meromorphic first integrals. The two settings
are therefore substantially different.

Finally, we obtain the following Darboux type theorem for projective foliations admitting infinitely many invariant Levi-flat varieties, 
proved in Section~\ref{sec:many-leviflats-plane}.

\begin{theorem}[Infinite-family alternative]
\label{thm:leviflat-intersection-alternative}
Let $\F$ be a holomorphic foliation on $\PP^2$ and let
$\{M_\alpha\}_{\alpha\in A}$ be infinitely many pairwise distinct
irreducible invariant real-analytic Levi-flat varieties. Assume that
there is one member $M_0$ for which
\(
(M_0\cap M_\alpha)\setminus\Sing(\F)\ne\varnothing
\)
for infinitely many $\alpha$. Assume moreover that
\(
\operatorname{Sing}(M_\alpha)\subset\operatorname{Sing}(\mathcal F)
\)
for every $\alpha\in A$. Then either $\F$ admits a nonconstant
rational first integral, or there is an irreducible invariant algebraic
curve $C$ contained in infinitely many $M_\alpha$. In the second
alternative the virtual holonomy group of every connected regular leaf
component of $C\setminus\Sing(\F)$ is abelian.
\end{theorem}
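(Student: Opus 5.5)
The argument analyses the curves $\Gamma_\alpha=M_0\cap M_\alpha$ and splits into two cases according to whether they are transverse to $\F$ or lie inside leaves.

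\emph{Step 1: structure of $\Gamma_\alpha$.} Fix a regular point $q\in(M_0\cap M_\alpha)\setminus\Sing(\F)$. Near $q$ choose a holomorphic first integral $z$ of $\F$, so that $\F=\{z=\text{const}\}$. Both $M_0$ and $M_\alpha$ are invariant, and their singular sets lie in $\Sing(\F)$, so near $q$ they are smooth. Hence $M_0=\{z\in\gamma_0\}$ and $M_\alpha=\{z\in\gamma_\alpha\}$ for real-analytic curves $\gamma_0,\gamma_\alpha$ in the transverse $z$-line. The intersection is therefore a union of leaves over $\gamma_0\cap\gamma_\alpha$. Since $M_\alpha\neq M_0$ and both are irreducible, $\gamma_0\cap\gamma_\alpha$ is a discrete set. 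So locally $M_0\cap M_\alpha$ is a finite union of plaques. Globally, each such plaque lies in a leaf $L$ of $\F$ contained in $M_0\cap M_\alpha$, and therefore $\overline L\subset M_0\cap M_\alpha$. This gives infinitely many leaves $L_\alpha\subset M_0$, each contained in some $M_\alpha$.

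\emph{Step 2: dichotomy via Darboux--Jouanolou.} Suppose first that infinitely many distinct leaves among the $L_\alpha$ are algebraic. By Jouanolou's theorem (in the Ghys/Darboux form), a foliation on $\PP^2$ with infinitely many invariant algebraic curves has a rational first integral, and we are in the first alternative. Otherwise only finitely many distinct leaves occur as $L_\alpha$ with $\overline{L_\alpha}$ algebraic. In that case I will show that only finitely many leaves occur at all, so that one leaf $L$ lies in infinitely many $M_\alpha$.

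\emph{Step 3: only finitely many leaves occur.} This is the step I expect to be the main obstacle. The set $M_0\cap M_\alpha$ is a real-analytic set of real dimension $\le 2$. By Step 1 it is invariant, so near regular points it is a finite union of plaques. A non-algebraic leaf in it would have closure of real dimension $2$ that is analytic off $\Sing(\F)$, and hence is a compact analytic curve. By Remmert--Stein, together with the fact that the set is compact of real dimension $2$ away from the finitely many singular points, that closure is algebraic by Chow's theorem. So every leaf contained in $M_0\cap M_\alpha$ has algebraic closure. Now take the finitely many algebraic invariant curves from Step 2. Each $M_\alpha$ meets $M_0$ off $\Sing(\F)$, so by pigeonhole a single curve $C$ lies in infinitely many $M_\alpha$. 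This gives the second alternative. The delicate point is the extension across $\Sing(\F)$ and the claim that $M_0\cap M_\alpha$ has no real-$2$-dimensional non-leaf pieces; both rest on $\Sing(M_\alpha)\subset\Sing(\F)$.

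\emph{Step 4: abelian holonomy.} Let $C$ be contained in $M_{\alpha_k}$ for infinitely many $k$, and let $\Sigma$ be a transverse disc at a regular point of $C$ with coordinate $z$, $C=\{z=0\}$. Each $M_{\alpha_k}$ traces on $\Sigma$ a real-analytic curve $\gamma_k$ through $0$. The curves $\gamma_k$ are pairwise distinct germs, and each is invariant under the virtual holonomy group $G$, because every $M_\alpha$ is $\F$-invariant. Each invariant curve germ is a union of real-analytic branches through $0$, and $G$ permutes their tangent lines. So a finite-index subgroup $G'$ has linear parts that fix infinitely many real directions, which forces the linear parts to be real multipliers. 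If $G$ contained a nonlinearizable or non-abelian subgroup, then by the Nakai/Shcherbakov density theorem (nonsolvable case) or the Cerveau--Moussu classification (solvable non-abelian case) its orbits would be dense in sectors or have a rigid finite set of invariant real curves, contradicting the existence of infinitely many distinct invariant real-analytic germs. Hence $G$ is abelian, which completes the second alternative.
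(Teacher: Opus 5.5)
Your Steps 1--3 follow the paper's proof. You use a flow box at a regular point of $M_0\cap M_\alpha$ and get discreteness of $\gamma_0\cap\gamma_\alpha$ from irreducibility. Hence $(M_0\cap M_\alpha)\setminus\Sing(\F)$ is a closed invariant curve in $\PP^2\setminus\Sing(\F)$. Then come Remmert--Stein across the finite set $\Sing(\F)$, Chow, and Jouanolou versus pigeonhole. The detour through non-algebraic leaves is superfluous: once every such leaf has algebraic closure, the dichotomy is simply infinitely versus finitely many curves $C_\alpha$.

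The genuine departure is Step 4. The paper proves abelianness with the elementary Lemma~\ref{lem:transverse-rigidity}. Every $g$ in the virtual holonomy group $G$ commutes with each Schwarz reflection $\sigma_{\gamma_k}$.
\begin{itemize}
\item If infinitely many tangent directions occur, multipliers are real. A hyperbolic element linearizes and forces the $\gamma_k$ to be lines, so $G$ consists of real homotheties. Otherwise multipliers are $\pm1$, and nontrivial parabolic elements have only finitely many invariant directions.
\item If only finitely many directions occur, two of the curves are tangent. Then $\sigma_2\circ\sigma_1$ is a nontrivial parabolic germ, and $G$ lies in its centralizer, which is abelian.
\end{itemize}

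Your alternative says a non-abelian $G$ is either nonsolvable or solvable non-abelian, and each kind preserves only finitely many real-analytic germs. This can be made to work, but as written it contains two false assertions and one unproved load-bearing one.

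\textbf{The two false assertions.} The claim that the linear parts fix infinitely many real directions is false. So is the claim that nonlinearizable subgroups are excluded. Take $F(z)=z/(1-z)$, the time-one map of $z^2\,\partial/\partial z$. It becomes $T\mapsto T+1$ in $T=-1/z$ and preserves every germ $\{\operatorname{Im}(-1/z)=c\}$. These are the real axis and the circles $x^2+y^2=y/c$, all tangent to $\RR$ at $0$, while $\langle F\rangle$ is abelian and not linearizable. Neither claim is needed, so delete them. Each $\gamma_k$ is smooth because $\Sing(M_\alpha)\subset\Sing(\F)$, so every $g$ fixes each tangent line and $g'(0)\in\RR^*$.

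\textbf{The gap: the solvable non-abelian case.} The Cerveau--Moussu normal form $G\subset\mathbb H_k=\{\lambda z(1+az^k)^{-1/k}\}$ does not by itself bound the number of invariant real-analytic germs. In $T=-1/(kz^k)$ the group acts by affine maps. $[G,G]$ contains a translation $T\mapsto T+\tau$ with $\tau\neq0$. Some $h\in G$ acts by $T\mapsto\mu T+b$ with $\mu\neq1$; otherwise $G$ would be abelian. You must show two things.
\begin{itemize}
\item Every smooth real-analytic germ invariant under the translation lies over a line parallel to $\tau$. Translation-invariant curves with periodic wiggles are not analytic at $z=0$. The clean proof: $\sigma_\gamma$ commutes with $F$, hence with the real flow of $\tau z^{k+1}\partial/\partial z$. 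It is therefore a reflection $T'\mapsto\overline{T'}+2i\kappa$ in the coordinate $T'=T/\tau$.
\item $h$ preserves at most one such line.
\end{itemize}
That is exactly the reflection mechanism the paper uses to conclude directly, without Nakai's theorem.

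\textbf{The nonsolvable case.} Your appeal to Nakai is acceptable once you note one point. Virtual holonomy maps are leaf-preserving, so they preserve the whole trace $M_{\alpha_k}\cap\Sigma$ on their domains, not just the germ $\gamma_k$. Pseudo-orbits of points of $\gamma_k$ therefore stay in a one-dimensional set. This is incompatible with Nakai's density off the finitely many separatrices.
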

\begin{corollary}
\label{corollary:abelian}
Under (ii) in Theorem~\ref{thm:leviflat-intersection-alternative}, assume, in addition, that the singularities of $\F$ on $C$ satisfy the following conditions:
\begin{enumerate}
\item every non-dicritical singularity of $\F$ on $C$ is reduced and nondegenerate, i.e., it is of the form
\[
x\,dy-\lambda y\,dx+(\cdots)=0,
\qquad \lambda\in\mathbb C^*\setminus\mathbb Q_{>0};
\]
\item the dicritical singularities are $p_1,\ldots,p_m\in C\cap\Sing(\F)$ and, after one blow-up at each $p_j$, every singularity of the transformed foliation lying on the strict transform of $C$ is reduced, non-dicritical, and is not a saddle-node.
\end{enumerate}
 Put
\[
d_j=\operatorname{mult}_{p_j}(C).
\]
If
\[
\sum_{j=1}^m d_j<\deg C,
\]
then $\F$ is defined on $\PP^2$ by a nonzero closed rational $1$-form.
If, in addition, the reduction of singularities of $\F$ contains no
saddle-nodes, then $\F$ is defined by a closed logarithmic rational
$1$-form.
\end{corollary}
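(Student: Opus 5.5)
We assume the second alternative of Theorem~\ref{thm:leviflat-intersection-alternative}. So there is an irreducible invariant algebraic curve $C$ such that the virtual holonomy group of each regular leaf component of $C\setminus\Sing(\F)$ is abelian. The plan is to build, near $C$, a closed meromorphic $1$-form defining $\F$, extend it to all of $\PP^2$, and read off logarithmic type from the singularities. This follows the classical scheme of Cerveau--Mattei and of Camacho--Scárdua for foliations with abelian holonomy along an invariant curve.

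\textbf{Step 1 (local normal forms and the holonomy form).} Fix a regular point $q\in C$ and a transversal $\Sigma\simeq(\CC,0)$. The virtual holonomy group $G$ is an abelian subgroup of $\mathrm{Diff}(\CC,0)$. By the classification of abelian groups of germs, either $G$ is linearizable, or $G$ is embedded in the flow of a formal vector field $x^{k+1}/(1+\lambda x^k)\,\partial_x$. In both cases there is a $G$-invariant closed meromorphic $1$-form $\eta$ on $\Sigma$, namely $\eta=dx/x$ in the linearizable case and $\eta=(1+\lambda x^k)\,dx/x^{k+1}$ in the other case. Convergence in the nonlinearizable case comes from the fact that $G$ is generated by the local holonomies at the reduced nondegenerate singularities. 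By hypothesis~(1) and hypothesis~(2) after blow-up, these holonomies are linearizable or resonant and analytically normalizable (no saddle-nodes). Using the nondegenerate normal forms $x\,dy-\lambda y\,dx$, we check that at each non-dicritical point of $C$ the local holonomy preserves $\eta$. Hence $\eta$ extends to a closed meromorphic form $\omega_0$ defining $\F$ near that point.

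\textbf{Step 2 (extension along $C$, including the dicritical points).} We lift $\eta$ along paths in $C\setminus\Sing(\F)$. Invariance of $\eta$ under the full virtual holonomy group makes the resulting form well defined on a neighbourhood $W$ of $C\setminus\Sing(\F)$. At a dicritical point $p_j$ we blow up once. The exceptional divisor $E_j$ is generically transverse to $\F$, and the strict transform $\tilde C$ meets $E_j$ in $d_j$ points, counted with multiplicity, all reduced and non-saddle-node by hypothesis~(2). The form near those points is again obtained from Step~1. Across the transverse part of $E_j$ we extend using a first integral along the dicritical fibres, which exists because $\omega_0$ is closed and $E_j$ is a transversal. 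Hartogs/Levi extension then gives a closed meromorphic $1$-form $\omega_0$ on a neighbourhood $W$ of $C$ defining $\F$.

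\textbf{Step 3 (globalization, the main obstacle).} Extending $\omega_0$ from $W$ to $\PP^2$ is the hard step. The inequality $\sum d_j<\deg C$ is what I would use to make $W$ a \emph{pseudoconcave} neighbourhood whose complement behaves well. The curve $C$ has self-intersection $\deg(C)^2>0$. After blowing up the $p_j$, the strict transform has self-intersection $\deg(C)^2-\sum d_j^2$, which stays positive because $\sum d_j^2\le(\sum d_j)^2<\deg(C)^2$. The strict transform is therefore an ample-type curve in the blown-up surface $\tilde X$, so $\tilde X\setminus\tilde C$ is holomorphically convex-like and the Levi problem applies. By Ivashkovich/Siu-type extension of meromorphic objects from a neighbourhood of a positive curve (Hartogs phenomenon for meromorphic functions across pseudoconcave boundaries), the meromorphic function $\omega_0/\Omega$ extends to $\tilde X$, where $\Omega$ is a fixed rational $1$-form defining $\F$. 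Hence $\omega_0$ is rational, and it is closed by the identity principle. Pushing it down to $\PP^2$ gives the closed rational $1$-form.

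\textbf{Step 4 (logarithmic case).} Assume the reduction of singularities of $\F$ has no saddle-nodes. Then every local holonomy is of the form considered in Step~1, and the poles of $\omega$ are controlled. We write $\omega=\sum\lambda_i\,df_i/f_i+d(h/\prod f_i^{n_i-1})$, the standard decomposition of closed rational forms. The exact term is then ruled out: a higher-order pole along an invariant component would force a nonlinearizable resonant holonomy, that is, a saddle-node in the reduction. Hence $\omega$ is logarithmic.
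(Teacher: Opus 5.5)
\textbf{There is a genuine gap: the closed form you build in Step~1 is in general only formal, so Steps~2 and~3 have nothing to act on. Step~4 is also incorrect as stated.}

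\textbf{Step~1 and its consequences.} For an abelian group $G\subset\operatorname{Diff}(\CC,0)$ the invariant differential is in general only formal. In the non-linearizable case it is, up to a constant, $dz/a(z)$, where $a\,\partial_z$ is the formal logarithm of a tangent-to-identity element of $G$. It diverges as soon as that element is not embeddable in an analytic flow (\'Ecalle--Voronin), and even a linearization may be only formal. Your reason for convergence does not hold, for two reasons:
\begin{itemize}
\item The holonomy of $C\setminus\Sing(\F)$ is not generated by the local holonomies at the singular points. The normalization of $C$ may have positive genus, and the virtual holonomy group is in general larger than the holonomy group.
\item ``Reduced, nondegenerate, not a saddle-node'' does not give analytic normalizability. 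Resonant saddles with $\lambda\in\mathbb Q_{<0}$ carry Martinet--Ramis moduli, and saddles with $\lambda\in\RR_{<0}\setminus\mathbb Q$ may be non-linearizable.
\end{itemize}
So you never obtain an actual closed meromorphic $\omega_0$ on a neighbourhood $W$. The justification in Step~3 is also loose: $\widetilde C^{\,2}>0$ makes $\widetilde C$ nef and big, not ample, so $S\setminus\widetilde C$ need not be Stein.

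The paper stays formal throughout:
\begin{itemize}
\item It suspends the formal invariant differential of Lemma~\ref{lem:abelian-invariant-form} along $C\setminus\Sing(\F)$.
\item At each singular point of the strict transform $\widetilde C$, it uses the Cerveau--Lins Neto first integral (available because $C$ lies in infinitely many Levi-flats) to reach the linearized model $H=x^ny^m$. Then $\widehat\eta_L=\varphi(H)\,dH/H$ extends formally through the point.
\item It checks compatibility at corners through the Dulac correspondence, a point your proposal does not address.
\item It replaces your analytic extension by Hironaka--Matsumura: from $\widetilde C^{\,2}>0$ one gets $K(\widehat S_{\widetilde C})=K(S)$, so the formal form has rational coefficients.
\end{itemize}
In this scheme your extension across the exceptional curves $E_j$ in Step~2 is unnecessary. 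As written it is also unjustified: a germ of transverse form near the points of $E_j\cap\widetilde C$ has no reason to continue meromorphically along all of $E_j$. One works on the blow-up along $\widetilde C$ only and descends via $K(S)=K(\PP^2)$.

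\textbf{Step~4.} A pole of order at least two along an invariant curve does not force a saddle-node. Multiply the closed form $\frac{dx}{x}+\mu\frac{dy}{y}+d\bigl(\frac{1}{xy}\bigr)$ by $x^2y^2$. You get $(xy^2-y)\,dx+(\mu x^2y-x)\,dy$, a reduced resonant saddle with linear part $-(y\,dx+x\,dy)$, non-linearizable for $\mu\ne1$. A non-linearizable resonant holonomy is not the same thing as a saddle-node. Nor can the exact part be discarded for the given $\omega$ when $\F$ has a rational first integral. For example, $d(x/y)$ defines a pencil whose reduction has no saddle-nodes, yet it has a double pole along $y=0$; one must pass to $dH/H$.

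The missing argument is the global one of Lemma~\ref{lem:closed-rational-logarithmic}:
\begin{itemize}
\item Write $\omega=\alpha+dR$ and resolve $R$ to a morphism $f$.
\item Show that a horizontal polar component of $\alpha$ produces a saddle-node where it meets $f^{-1}(\infty)$.
\item For vertical components, use Zariski's lemma, the residue theorem and $H^0(S,\Omega^1_S)=0$ to obtain $\widetilde\alpha=f^*\beta_0$.
\item Then $f$ is a rational first integral, and $dH/H$ is the required logarithmic form.
\end{itemize}
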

Corollary~\ref{corollary:abelian} is proved right after Theorem~\ref{thm:leviflat-intersection-alternative}; the final assertion follows from
Lemma~\ref{lem:closed-rational-logarithmic}. 

The paper is organized as follows. Section~\ref{section:definitions} contains the basic material and definitions we shall refer to in our main results. 
Section~\ref{sec:Examples}
presents the motivating pencil models, primitive first integrals, and
the transverse-separation model used in Corollary~\ref{cor:local-separation};
it also illustrates the two geometric branches of the infinite-family
results. Sections~\ref{sec:continuation}--\ref{sec:semiglobal} carry out
the continuation and semi-global extension. Sections~\ref{sec:pseudoconvexity}--\ref{sec:globalization}
prove local pseudoconvexity and projective globalization. The subsequent
sections contain the real-transverse and holonomy-rigidity results and
the projective-plane consequences for infinite families of invariant
Levi-flats. The remaining  sections are dedicated to   Stein version and multiple-dicritical singularities
variants, as well as  the proof of Theorem~\ref{thm:plane-characterization}.

\section{Levi-flat varieties and complex foliations}
\label{section:definitions}

We shall extend to the singular case the classical notion of Levi-flat hypersurface in a connected complex manifold (see \cite{BarrettForness}). 
Let  $M\subset X$ be a  real-analytic subvariety of pure real codimension one  in a complex
manifold $X$. Denote by $M_{\reg}\subset M$ its regular (nonsingular) part. The {\it complex tangent distribution} on $M_{\reg}$ is
\[
T^{\CC}M=TM\cap J(TM).
\]
 The subvariety $M$ will be  called \emph{Levi-flat} when its complex distribution is
integrable; equivalently, its Levi form vanishes identically on
$M_{\reg}$ (see \cite{BarrettForness}). The resulting foliation of $M_{\reg}$ by complex
hypersurfaces is the Levi foliation. In the projective setting this
notion is naturally tied to holomorphic foliations. In particular,
Lins Neto's results \cite{LinsNeto1999} show that {\em smooth} real-analytic
projective Levi-flats are strongly constrained; in dimensions
$n\ge3$ they do not occur. This is one reason why singular
real-analytic Levi-flat varieties  are the natural objects in the
higher-dimensional projective problem considered here.

We also extend to the singular case the  natural notion of invariance by a foliation. 
Let $\F$ be a codimension one holomorphic foliation with singularities on a connected complex manifold $X$.
A real-analytic subvariety of pure real codimension one $M\subset X$ is \emph{invariant by $\F$} if, for every
$x\in M\setminus\Sing(\F)$, the leaf $\F_x$ through $x$ is contained
in $M$. Under the additional condition
\[
\Sing(M)\subset\Sing(\F),
\]
this is equivalent, for every
$x\in M_{\reg}\setminus\Sing(\F)$, to
\(
T_x\F\subset T_xM.
\)
Since $T_x\F$ and the complex tangent hyperplane
$T_x^{\CC}M$ have the same complex dimension, one has
\(
T_x\F=T_x^{\CC}M.
\)
Thus $M_{\reg}$ is automatically Levi-flat and its Levi foliation is
the restriction of $\F$.

We shall use the following standard notion of a minimal local meromorphic first
integral. Let  $(\F,p)$ be a germ of codimension one holomorphic foliation with a singularity at $p$. If the germ $(\F,p)$ admits a nonconstant meromorphic first
integral, then there exists a meromorphic first integral
\[
F_p:(X,p)\dashrightarrow\PP^1
\]
such that every meromorphic first integral $H$ of $(\F,p)$ can be
written
\[
H=R\circ F_p,\qquad R\in\CC(z).
\]
Such an $F_p$ is called a \emph{minimal meromorphic first integral}.
The existence of a minimal generator for the field of meromorphic first
integrals is due to Cerveau--Mattei; see \cite[Chap.~V]{CM}.
Equivalently, $F_p$ is minimal precisely when it admits no nontrivial
factorization
\[
F_p=R\circ G,\qquad \deg R>1,
\]
with $G$ another meromorphic first integral. If $F_p$ and
$\widetilde F_p$ are both minimal, then
$\widetilde F_p=R\circ F_p$ and $F_p=S\circ\widetilde F_p$ for
rational functions $R,S$. Hence $S\circ R=\operatorname{id}$, so
$R$ has degree one. Thus two minimal first integrals differ by a
M\"obius transformation. The field-theoretic argument is an
application of L\"uroth's theorem; see \cite{CM}.

\section{Examples}
\label{sec:Examples}

This section presents examples and local constructions illustrating the
hypotheses of the main results. We begin with rational pencils, discuss
primitivity and the geometry of their base loci, and conclude with the
transverse-separation model used in
Corollary~\ref{cor:local-separation}.

\subsection{Rational pencils}
\label{subsec:basic-pencil}
A nonconstant rational function
$R:\PP^m\dashrightarrow\PP^1$ will be called \emph{primitive} if its
generic fiber is connected. This is the usual notion; see, for instance, \cite{GalindoMonserrat}.

\begin{example}
\label{ex:pencil}
Let
\[
R=\frac{F}{G}:\PP^n\dashrightarrow\PP^1,
\]
where $F$ and $G$ are homogeneous polynomials of the same degree, and
suppose that $R$ is a rational first integral of a codimension-one
foliation $\F$.  Let $\gamma\subset\PP^1$ be a real-analytic curve and
set
\[
M=\overline{R^{-1}(\gamma)}.
\]
Away from the critical and indeterminacy loci of $R$, use local
coordinates
\[
(z_1,\ldots,z_{n-1},t),\qquad t=R.
\]
If $\rho(t,\overline t)=0$ is a real-analytic equation for $\gamma$,
then $M$ is locally defined by the same equation and
\[
T^{1,0}M=\ker dR.
\]
Its Levi leaves are the connected components of
\[
R^{-1}(\lambda),\qquad \lambda\in\gamma.
\]

For $\gamma=\mathbb RP^1$ one has
\[
M=\overline{R^{-1}(\mathbb RP^1)},
\]
and, away from the base locus $\{F=G=0\}$,
\[
F\overline G-\overline F\,G=0.
\]
The Levi leaves are
\[
F-\lambda G=0,\qquad \lambda\in\mathbb RP^1.
\]
When $D=\{F=G=0\}$ is irreducible of codimension two and generically
Kupka (hence generically of local product type), this is the basic model for the dicritical configuration in
Theorem~\ref{thm:projective-main}.

The same construction also anticipates the two alternatives for
infinite families of invariant Levi-flats on $\PP^2$. Suppose now that
$n=2$. If two real-analytic curves $\gamma_1,\gamma_2\subset\PP^1$
meet at a regular value $\lambda$, then
\[
\overline{R^{-1}(\gamma_1)}\cap
\overline{R^{-1}(\gamma_2)}
\]
contains the algebraic fiber $R^{-1}(\lambda)$. Thus intersections of
parameter curves produce invariant algebraic leaves. On the other hand,
one may choose infinitely many pairwise disjoint real-analytic curves
in an affine coordinate of the parameter sphere, for example concentric
circles
\[
|z|=r_j,\qquad r_i\ne r_j.
\]
Their inverse images are pairwise separated away from the base locus
and meet there only through the indeterminacy of the pencil. This is the
basic model for the separated pencil-type situation considered in
Section~\ref{sec:many-leviflats-plane}.
\end{example}

\subsection{Primitive pencils and pencil singularities}
\label{subsec:primitive-pencils}

We first recall the notion of primitivity in the form used below. A
nonconstant local meromorphic germ
\[
F:(X,p)\dashrightarrow(\PP^1,\lambda_0)
\]
is called \emph{primitive} if, after choosing a local coordinate on
the target, it admits no factorization
\[
F=\varphi\circ G
\]
with $G$ meromorphic and $\varphi$ a finite holomorphic germ of degree
strictly larger than one. After resolving the indeterminacy and
restricting to suitable representatives, the analytic Stein
factorization has the form
\[
\widetilde X\xrightarrow{\,G\,}S\xrightarrow{\,\varphi\,}\PP^1,
\]
where $G$ has connected fibers and $\varphi$ is finite; see
\cite[Ch.~10, \S6]{GR}. Thus primitivity is equivalent to the finite
factor having degree one, or, equivalently, to connected generic
fibers. The same description applies to a global rational map after
resolution of its indeterminacy. We shall use ``primitive rational
first integral'' in this sense.

Consider now a meromorphic first integral of the form
\[
R=\frac{f^m}{g^n},
\qquad m,n\ge1,
\]
written locally with $f$ and $g$ having no common divisorial factor.

The following conditions give a convenient generic model:
\[
\gcd(m,n)=1,
\]
the base locus
\[
B=\{f=g=0\}
\]
is irreducible of pure codimension two, $df\wedge dg\neq0$ generically
along $B$, the pencil is primitive, and the smooth real-analytic curve
$\gamma\subset\PP^1$ defining
\[
M=\overline{R^{-1}(\gamma)}
\]
avoids the multiple-fiber values $0,\infty$ and the ordinary critical
values of $R$ away from the base locus. Such curves are plentiful; in
the generic projective situation only finitely many values have to be
avoided, and after a change of coordinate on $\PP^1$ one may take, for
instance, a circle contained in $\CC^*$.

These assumptions also explain why the standing condition
\[
\Sing(M)\subset\Sing(\F)
\]
is natural in pencil examples. Since $\gamma$ avoids the bad values,
$M$ is smooth away from $B$. On the other hand, after saturation the
foliation defined by $R=f^m/g^n$ is locally represented by
\[
\omega=m g\,df-n f\,dg.
\]
Hence $\omega$ vanishes along $B$, and therefore
\[
\Sing(M)\subset B\subset\Sing(\F).
\]
At points where $df\wedge dg\neq0$ one moreover has
\[
d\omega=-(m+n)\,df\wedge dg\neq0,
\]
so the base locus is generically Kupka and therefore generically of local product type. The relevant fibers over
$\gamma$ are connected and smooth after resolution and hence
irreducible. Consequently the closure of each corresponding Levi leaf
contains the generic base locus. Moreover, $R$ is constant on every
such global Levi leaf. Hence, after replacing $R$ locally by a minimal
generator, every generic point of $B$ is a pencil singularity for $M$.

The coprimality condition on $m$ and $n$ excludes the elementary
factorization
\[
\frac{f^m}{g^n}
=
\left(\frac{f^{m/d}}{g^{n/d}}\right)^d,
\qquad d=\gcd(m,n)>1,
\]
while primitivity excludes the remaining nontrivial compositeness of
the meromorphic map.

\subsection{Logarithmic foliations}
\label{subsec:logarithmic-leviflats}

A natural class of examples is provided by {\em logarithmic foliations}. Let
$X$ be a connected complex manifold and let
$f_j\in\mathcal O(X)$, $\lambda_j\in\mathbb C^*$, $j=1,\ldots,r$. Consider
\(
\eta=\sum_{j=1}^r\lambda_j\frac{df_j}{f_j}.
\)
This is a closed logarithmic $1$-form, holomorphic on
\(
X^\circ=X\setminus D,\) with polar divisor \(
D=\bigcup_{j=1}^r\{f_j=0\}.
\)
The associated codimension one singular holomorphic foliation on $X$ will be denoted by $\mathcal F_\eta$. If one
clears denominators in order to obtain a holomorphic defining form, common
divisorial factors are understood to be removed. We may assume that this form is given by $\Omega=(f_1\cdots f_m)\eta$. 

On every simply connected open set $U\subset X^\circ$ one may choose a
holomorphic primitive
\[
F_U=\sum_{j=1}^r\lambda_j\log f_j,
\qquad
\eta|_U=dF_U.
\]
Hence the local leaves of $\mathcal F_\eta$ are the level hypersurfaces
of $F_U$. On overlaps, two such primitives differ by an additive
constant. Analytic continuation therefore defines an additive monodromy
(or period) group. In the usual reduced logarithmic representation, in
which the divisors $\{f_j=0\}$ are the irreducible components of the
polar divisor $D$, this group is
\[
\Gamma=
\left\langle
2\pi i\lambda_1,\ldots,2\pi i\lambda_r
\right\rangle
\subset\mathbb C.
\]
More generally, one may replace $\Gamma$ below by the actual period group
of $\eta$.

We first observe that the appearance of a one-dimensional real-analytic
set in the transverse parameter is not an additional hypothesis: it is a
local consequence of invariance.

\begin{lemma}[Invariant Levi-flats and logarithmic transverse structure]
\label{lem:leviflat-transverse-set}
Let $M\subset X$ be a real-analytic variety of pure real codimension one
invariant by $\mathcal F_\eta$. At every point
\[
p\in M\setminus\bigl(D\cup\Sing(\mathcal F_\eta)\bigr)
\]
there are a neighborhood $U$ of $p$, a holomorphic primitive
\[
F_U:U\longrightarrow\mathbb C,
\qquad dF_U=\eta,
\]
and a real-analytic subset $\gamma_U\subset\mathbb C$ of pure real
dimension one such that
\[
M\cap U=F_U^{-1}(\gamma_U).
\]
Moreover, if $F_V=F_U+c_{UV}, \, c_{UV}\in \mathbb C$ on an overlap $U\cap V \ne \emptyset$, then the corresponding
transverse sets satisfy
\[
\gamma_V=\gamma_U+c_{UV}.
\]
\end{lemma}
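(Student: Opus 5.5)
The plan is to straighten the foliation near $p$ using a local primitive of $\eta$, and then read off the transverse set $\gamma_U$ as the image of $M$ under that primitive.

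Since $p\notin D$, the form $\eta$ is holomorphic and closed near $p$. Since $p\notin\Sing(\mathcal F_\eta)$, and $\Omega=(f_1\cdots f_m)\eta$ with $f_1\cdots f_m(p)\neq0$, we get $\eta(p)\neq0$.

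\textbf{Step 1 (chart).} Take a small polydisc-like simply connected neighborhood $U$ of $p$ in $X\setminus(D\cup\Sing(\mathcal F_\eta))$. Choose a primitive $F_U$ with $dF_U=\eta$. Then $dF_U(p)\ne0$, so after shrinking $U$ we have holomorphic coordinates $(z_1,\dots,z_{n-1},t)$ with $t=F_U$, and $U\cong B\times\Delta$ where $B$ is a ball and $\Delta\subset\CC$ a disc. The plaques of $\mathcal F_\eta$ in $U$ are exactly the connected slices $B\times\{t\}$.

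\textbf{Step 2 (saturation).} Invariance gives that for every $x\in M\cap U$ (note $x\notin\Sing(\mathcal F_\eta)$), the leaf through $x$ lies in $M$. In particular the plaque through $x$ lies in $M$, since the plaque is contained in the leaf. Hence $M\cap U=B\times\gamma_U$ with
\[
\gamma_U:=F_U(M\cap U)=\{t\in\Delta:(0,t)\in M\}.
\]
Equivalently $\gamma_U=M\cap(\{0\}\times\Delta)$, the intersection of $M$ with a holomorphic transversal. This shows $\gamma_U$ is a real-analytic subset of $\Delta\subset\CC$, and that $M\cap U=F_U^{-1}(\gamma_U)$.

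\textbf{Step 3 (dimension).} The main point to verify is that $\gamma_U$ has pure real dimension one. Since $M\cap U\cong B\times\gamma_U$, the real dimension of $M$ at $(z,t)$ equals $2n-2+\dim_t\gamma_U$. Pure real codimension one of $M$, i.e. pure real dimension $2n-1$, then forces $\gamma_U$ to have dimension exactly one at each of its points. This uses that local dimension of real-analytic sets is compatible with products with a manifold, which is standard.

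\textbf{Step 4 (overlaps).} On $U\cap V$ we have $F_V=F_U+c_{UV}$. Locally on $U\cap V$ this gives $F_V(M\cap U\cap V)=F_U(M\cap U\cap V)+c_{UV}$, which is the relation $\gamma_V=\gamma_U+c_{UV}$ as germs along the common plaques. Globally it holds after shrinking so that $F_U(U\cap V)$ and $F_U(U)$ agree on the relevant transverse disc, as is natural for small flow-box charts. I expect this to be the only place requiring care about what ``transverse set'' means: it should be read as equality of the sets on the overlap of their domains.
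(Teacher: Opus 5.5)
Your proposal is correct and follows essentially the same route as the paper. You straighten the foliation by a local primitive $F_U$ of the nonvanishing closed form $\eta$, justifying nonvanishing via the standing normalization $\Omega=(f_1\cdots f_m)\eta$. Invariance then makes $M\cap U$ a union of plaques, hence $F_U^{-1}(\gamma_U)$ with $\gamma_U$ the trace on a transversal. Pure real codimension one of $M$ gives pure real dimension one of $\gamma_U$, and on overlaps you translate by $c_{UV}$.

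Your remark that $\gamma_V=\gamma_U+c_{UV}$ should be read as an equality on $F_V(U\cap V)$ is exactly what the paper's identity $F_U^{-1}(\gamma_U)=F_U^{-1}(\gamma_V-c_{UV})$ implicitly means.
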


\begin{proof}
At a point outside $D\cup\Sing(\mathcal F_\eta)$, the form $\eta$ is
holomorphic and nonvanishing. After shrinking the neighborhood, there is
a holomorphic submersion $F_U$ with $dF_U=\eta$, and its fibers are the
local leaves of $\mathcal F_\eta$.

Since $M$ is invariant, $M\cap U$ is saturated by these fibers. In a
holomorphic flow box this means precisely that $M\cap U$ is the inverse
image of a real-analytic subset of the one-dimensional transverse
parameter. Since $M$ has pure real codimension one, this transverse set
has pure real dimension one. Thus
\[
M\cap U=F_U^{-1}(\gamma_U).
\]
If $F_V=F_U+c_{UV}$ on an overlap, the two descriptions of the same set
$M$ give
\[
F_U^{-1}(\gamma_U)=F_V^{-1}(\gamma_V)
=F_U^{-1}(\gamma_V-c_{UV}),
\]
and therefore
\[
\gamma_V=\gamma_U+c_{UV}.
\]
\end{proof}

The following elementary lemma describes the possible invariant
real-analytic subsets of the additive transverse line.

\begin{lemma}[Period groups and singular real-analytic sets]
\label{lem:period-group-singular-transverse}
Let
\(
\Gamma=
\left\langle
2\pi i\lambda_1,\ldots,2\pi i\lambda_r
\right\rangle
\subset\mathbb C
\) be the additive subgroup. 
Then the following are equivalent:
\begin{enumerate}[label=\textup{(\roman*)}]
\item $\Gamma$ is not dense in $\mathbb C$;
\item there exists a nonempty proper closed real-analytic subset
\(
\gamma\subset\mathbb C
\)
of pure real dimension one satisfying
\(
\gamma+\Gamma=\gamma.
\)
\end{enumerate}
Thus no proper $\Gamma$-invariant real-analytic subset of pure real
dimension one exists if and only if
\(
\overline\Gamma=\mathbb C.
\)
\end{lemma}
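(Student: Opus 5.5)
We use the structure theorem for closed subgroups of $\CC\cong\RR^2$. Every closed subgroup $H\subset\RR^2$ splits as $H=V\oplus\Lambda$, where $V$ is a real linear subspace and $\Lambda$ is a discrete subgroup of a complementary subspace. We apply this to $H=\overline\Gamma$. The direction (ii)$\Rightarrow$(i) is a closure argument. For (i)$\Rightarrow$(ii), we produce $\gamma$ as the zero set of an explicit real-analytic function $\sin(\pi\ell/c)$, where $\ell$ is a real linear functional.

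\emph{(ii)$\Rightarrow$(i).} Let $\gamma$ be closed, nonempty, proper, and satisfy $\gamma+\Gamma=\gamma$. We first show that $\gamma+\overline\Gamma=\gamma$. Take $x\in\gamma$ and $g\in\overline\Gamma$, and choose $g_n\in\Gamma$ with $g_n\to g$. Then $x+g_n\in\gamma$ and $x+g_n\to x+g$, so $x+g\in\gamma$ because $\gamma$ is closed. Now suppose $\overline\Gamma=\CC$. Then $\gamma\supset x+\CC=\CC$ for any $x\in\gamma$, which contradicts properness. Hence $\Gamma$ is not dense. This direction uses neither analyticity nor the dimension hypothesis.

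\emph{(i)$\Rightarrow$(ii).} Write $\overline\Gamma=V\oplus\Lambda$ with $\dim_{\RR}V\le1$, since $\overline\Gamma\neq\CC$. The key step is to find a nonzero $\RR$-linear functional $\ell:\CC\to\RR$ such that $\ell(\overline\Gamma)\subset c\mathbb Z$ for some $c>0$. We distinguish two cases.
\begin{enumerate}[label=\textup{(\alph*)}]
\item If $\dim V=1$, take $\ker\ell=V$. Then $\ell(\overline\Gamma)=\ell(\Lambda)$, and $\Lambda$ is a discrete subgroup of rank at most one in a complement of $V$, so $\ell(\Lambda)$ is a discrete subgroup of $\RR$ and lies in some $c\mathbb Z$.
\item If $\dim V=0$, then $\overline\Gamma$ is discrete of rank at most $2$. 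Enlarge it to a full lattice $\mathbb Z e_1\oplus\mathbb Z e_2$, and let $\ell$ be the dual coordinate with $\ell(e_1)=1$ and $\ell(e_2)=0$; take $c=1$.
\end{enumerate}

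In both cases set
\[
\gamma=\{z\in\CC:\ \sin(\pi\ell(z)/c)=0\}=\ell^{-1}(c\mathbb Z).
\]
This set is a closed real-analytic subset of $\CC$: it is a disjoint union of parallel real lines, so it has pure real dimension one and is nonempty and proper. It is $\Gamma$-invariant, because for $g\in\Gamma$ we have $\ell(z+g)=\ell(z)+\ell(g)$ with $\ell(g)\in c\mathbb Z$. (In case (a) with $\ell(\Lambda)=0$ one could equally take $\gamma=V$, but the formula above already covers this case.)

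The final sentence of the statement is simply the contrapositive form of the equivalence. The only point that needs care is the structure theorem for closed subgroups of $\RR^2$, which we cite as standard. Once it is available, the rest is routine, and the finite generation of $\Gamma$ plays no role.
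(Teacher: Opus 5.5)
Your proof is correct and follows essentially the same route as the paper: the closure argument $\gamma+\overline\Gamma=\gamma$ for one direction, and, for the other, the structure theorem for closed subgroups of $\CC\simeq\RR^2$ together with explicit sine-defined invariant sets. The only difference is cosmetic: you handle all non-dense cases with a single functional $\ell$ satisfying $\ell(\overline\Gamma)\subset c\mathbb Z$, so $\gamma=\ell^{-1}(c\mathbb Z)$ is always a smooth family of parallel lines, whereas the paper treats the subgroup types separately and in the lattice case uses the singular grid $\{\sin(\pi x)\sin(\pi y)=0\}$.
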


\begin{proof}
Assume first that $\overline\Gamma=\mathbb C$, and let
$\gamma\subset\mathbb C$ be nonempty and closed with
$\gamma+\Gamma=\gamma$. Since translations depend continuously on the
translating parameter,
\[
\gamma+\overline\Gamma=\gamma.
\]
Hence $\gamma+a=\gamma$ for every $a\in\mathbb C$. Choosing
$z_0\in\gamma$ gives $z_0+\mathbb C\subset\gamma$, and therefore
$\gamma=\mathbb C$.

Conversely, suppose that $\Gamma$ is not dense. Its closure is a proper
closed subgroup of the real Lie group
$\mathbb C\simeq\mathbb R^2$. Up to a real-linear change of coordinates,
the possibilities are
\[
\{0\},\qquad
\mathbb Z v,\qquad
\mathbb Z v+\mathbb Z w,\qquad
\mathbb R v,\qquad
\mathbb R v+\mathbb Z w,
\]
where $v$ and $w$ are linearly independent over $\mathbb R$ whenever
both occur.

If the group is contained in a real line $\mathbb R v$, an affine line
parallel to $v$ is invariant. If
\[
\overline\Gamma=\mathbb R v+\mathbb Z w,
\qquad w\notin\mathbb R v,
\]
then
\[
\gamma=
\bigcup_{k\in\mathbb Z}\bigl(\mathbb R v+kw\bigr)
\]
is a proper invariant real-analytic set of pure real dimension one; in
suitable real-linear coordinates it is given by
\[
\sin(\pi y)=0.
\]
If $\overline\Gamma$ is a rank-two lattice, then in lattice coordinates
one may take
\[
\gamma=
\{\sin(\pi x)\sin(\pi y)=0\}.
\]
The trivial and discrete cyclic cases are immediate. Thus every proper
closed subgroup of $\mathbb C$ preserves a proper real-analytic subset
of pure real dimension one.
\end{proof}

The period group therefore gives a useful sufficient condition for the
existence of singular invariant Levi-flats compatible with the chosen
logarithmic primitive.

\begin{proposition}[Period-compatible Levi-flats]
\label{prop:period-compatible-leviflats}
Assume that there exists a nonempty proper real-analytic subset
\[
\gamma\subset\mathbb C
\]
of pure real dimension one satisfying
\[
\gamma+\Gamma=\gamma.
\]
Then the local sets
\[
M_{\gamma,U}=F_U^{-1}(\gamma)
\]
glue on $X^\circ$ to an invariant real-analytic Levi-flat variety
$M_\gamma$. Moreover, the translates $\gamma+c$, $c\in\mathbb C$,
produce infinitely many pairwise distinct invariant Levi-flat varieties
of the same type.

In particular, if $\Gamma$ is not dense in $\mathbb C$, then
$\mathcal F_\eta|_{X^\circ}$ admits infinitely many invariant
real-analytic Levi-flat varieties obtained in this way.
\end{proposition}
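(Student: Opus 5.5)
The plan has three parts: glue the local sets $M_{\gamma,U}$ into a well-defined subset of $X^\circ$, check that this subset is an invariant real-analytic variety, and then produce infinitely many distinct members by translating $\gamma$.

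\textbf{Step 1: gluing.} Cover $X^\circ$ by simply connected open sets $U$, and on each one fix a holomorphic primitive $F_U$ with $dF_U=\eta|_U$. On a connected component $W$ of an overlap $U\cap V$, the two primitives satisfy $F_V=F_U+c_{UV}$ for some constant $c_{UV}$. This constant is obtained by continuing $F_U$ along a path, so $c_{UV}$ is a period of $\eta$ and hence $c_{UV}\in\Gamma$. Here $\Gamma$ is understood as the actual period group, as allowed in the text. Using $\gamma+\Gamma=\gamma$ we get
\[
F_V^{-1}(\gamma)\cap W=F_U^{-1}(\gamma-c_{UV})\cap W=F_U^{-1}(\gamma)\cap W .
\]
Consequently the sets $M_{\gamma,U}$ agree on overlaps, and their union $M_\gamma\subset X^\circ$ is well defined.

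\textbf{Step 2: real-analyticity and invariance.} Locally $\gamma$ is the zero set of a real-analytic function $\rho$ on $\mathbb C$. Then $M_\gamma\cap U$ is the zero set of $\rho\circ F_U$, which is real-analytic, so $M_\gamma$ is a real-analytic subset of $X^\circ$. Away from $\Sing(\mathcal F_\eta)$ the map $F_U$ is a submersion, so $M_\gamma$ has pure real codimension one there; this is the same flow-box picture as in Lemma~\ref{lem:leviflat-transverse-set}. Invariance holds because each leaf is locally a fiber of $F_U$. Any such fiber is either contained in $F_U^{-1}(\gamma)$ or disjoint from it, and this property propagates along leaves by Step 1. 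At regular points $T\mathcal F_\eta=\ker dF_U\subset TM_\gamma$, so the discussion in Section~\ref{section:definitions} shows $M_\gamma$ is Levi-flat with Levi foliation $\mathcal F_\eta$.

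\textbf{Step 3: infinitely many distinct varieties.} Since $\Gamma$ is abelian, the translate $\gamma+c$ is again $\Gamma$-invariant, so Steps 1–2 apply to it and give $M_{\gamma+c}$. Distinctness is the main obstacle. If $M_{\gamma+c}=M_{\gamma+c'}$, then applying $F_U$ on a fixed $U$ gives $\gamma+c$ and $\gamma+c'$ equal on the open set $F_U(U)$. This open set is nonempty because $\eta\not\equiv0$.
\begin{itemize}
\item I would first pass from local equality to equality of the full curves. Both are real-analytic curves, and analytic continuation along the branches of $\gamma$ suggests they coincide globally, though this needs care for disconnected $\gamma$. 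Alternatively one can choose $U$ so that $F_U(U)$ meets $\gamma$ in a single smooth arc. Then for $c$ in a small transverse segment through $0$, the translated arcs $\gamma+c$ are pairwise distinct on $F_U(U)$, hence so are the $M_{\gamma+c}$.
\item The stabilizer $\{a\in\mathbb C:\gamma+a=\gamma\}$ is a closed subgroup of $\mathbb C$. It is proper, since $\gamma$ is a proper subset by the argument of Lemma~\ref{lem:period-group-singular-transverse}, and it cannot contain an open set. So the uncountably many cosets $c+\operatorname{Stab}(\gamma)$ give infinitely many distinct translates.
\end{itemize}

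\textbf{Last statement.} If $\Gamma$ is not dense, Lemma~\ref{lem:period-group-singular-transverse} produces a nonempty proper $\Gamma$-invariant real-analytic $\gamma$ of pure real dimension one, and the first part of the proposition applies.
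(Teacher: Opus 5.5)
Your proposal is correct and follows the paper's proof in its main lines. The local preimages glue because the transition constants $c_{UV}$ lie in $\Gamma$ and $\gamma+\Gamma=\gamma$. In Step~1, as in the paper, this tacitly requires the $F_U$ to be branches of a single multivalued primitive, since independently chosen primitives differ by arbitrary constants. The supply of translates then comes from $\operatorname{Stab}(\gamma)$ being a proper closed subgroup of $\mathbb C$.

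Where you go beyond the paper is Step~3. The stabilizer argument, which is all the paper gives, only shows that the curves $\gamma+c\subset\mathbb C$ are pairwise distinct. But $M_{\gamma+c}$ depends only on $(\gamma+c)\cap\Omega$, where $\Omega$ is the union of the images of all branches of the primitive. When $\Omega\neq\mathbb C$, distinct translates can give the same variety. For example, take $\eta=dw/w$ on $\{e^{-1}<|w|<e\}\times\mathbb C$ and $\gamma=\{\operatorname{Re}z=0\}\cup\{\operatorname{Re}z=10\}$; then $M_\gamma=M_{\gamma-10}=\{|w|=1\}$.

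Your transverse-segment argument is what actually yields pairwise distinct varieties, and it makes the stabilizer argument unnecessary. Concretely: take $U$ with $dF_U\neq0$, so that $F_U(U)$ is a small disc meeting some translate $\gamma+c_0$ in a single smooth arc, and move $c$ along a short segment transverse to that arc. Note that you may need such a translate $c_0$, since $\gamma$ itself need not meet $\Omega$. The ``analytic continuation'' alternative in your first bullet should be dropped, since it fails exactly in the disconnected situation of this example.

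A minor point in Step~2: $F_U$ is a submersion only off the zero set of $\eta$, which may contain hypersurfaces along which $\mathcal F_\eta$ is regular. Pure real codimension one still holds there, because $F_U$ is open and, near any point, its only critical value is the value at that point.
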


\begin{proof}
Let $\{U\}$ be a simply connected covering of $X^\circ$ and choose on
$U$ a primitive $F_U$ of $\eta$. On an overlap one has
\[
F_V=F_U+c_{UV},
\qquad c_{UV}\in\Gamma.
\]
Since $\gamma+\Gamma=\gamma$,
\[
F_V^{-1}(\gamma)
=F_U^{-1}(\gamma-c_{UV})
=F_U^{-1}(\gamma).
\]
Hence the local inverse images glue to a global real-analytic subset
$M_\gamma\subset X^\circ$, saturated by the leaves of
$\mathcal F_\eta$. Its regular part is therefore Levi-flat.

For every $c\in\mathbb C$, the translate $\gamma+c$ is again
$\Gamma$-invariant and gives another global invariant Levi-flat. Since
$\gamma$ is proper, its translation stabilizer
\[
\operatorname{Stab}(\gamma)
=\{c\in\mathbb C;\ \gamma+c=\gamma\}
\]
is a proper closed subgroup of $\mathbb C$. Hence the quotient
$\mathbb C/\operatorname{Stab}(\gamma)$ is infinite, and the family of
translates contains infinitely many distinct members.
The last assertion follows from
Lemma~\ref{lem:period-group-singular-transverse}.
\end{proof}

\begin{remark}
The converse to Proposition~\ref{prop:period-compatible-leviflats} is
false for arbitrary invariant Levi-flats. Let $R$ be a
nonconstant meromorphic first integral and choose distinct constants
$a_1,\ldots,a_r$. Then
\[
\eta=
\sum_{j=1}^r
\lambda_j\frac{d(R-a_j)}{R-a_j}
=
\left(
\sum_{j=1}^r\frac{\lambda_j}{R-a_j}
\right)dR
\]
defines, away from its polar and zero divisors, the same foliation as
$dR=0$. The coefficients $\lambda_j$ may be chosen so that the period
group of $\eta$ is dense in $\mathbb C$, while the foliation still
admits infinitely many invariant Levi-flat varieties obtained as inverse
images under $R$ of real-analytic curves in $\mathbb P^1$. 
\end{remark}

There is an important and particularly simple subcase. Suppose that the
residues are collinear over $\mathbb R$. Equivalently, there exists
$a\in\mathbb C^*$ such that
\[
a\lambda_j\in\mathbb R,
\qquad j=1,\ldots,r.
\]
Replacing $\eta$ by $a\eta$, we may assume that all residues are real.
Then
\[
h=\operatorname{Re}(F)
 =\sum_{j=1}^r\lambda_j\log|f_j|
\]
is single-valued on $X^\circ$, because all periods of $F$ are purely
imaginary, and
\[
2\partial h=\eta.
\]
Consequently every regular level
\[
M_c=\{h=c\}
\]
is an invariant Levi-flat hypersurface, and varying $c$ gives an
infinite one-parameter family.

The converse is best formulated at the level of this real-part model.

\begin{proposition}[Periods and real-part levels]
\label{proposition:logarithmic}
Let $\Pi_\eta\subset\CC$ be the actual period group of $\eta$ on
$X^\circ$. For the logarithmic foliation $\mathcal F_\eta$, the
following are equivalent:
\begin{enumerate}[label=\textup{(\roman*)}]
\item there exists $a\in\mathbb C^*$ such that
\[
\operatorname{Re}(a\tau)=0
\qquad\text{for every }\tau\in\Pi_\eta;
\]
\item after multiplication of $\eta$ by a nonzero constant, the
multivalued primitive has a single-valued real part on $X^\circ$;
\item $\mathcal F_\eta|_{X^\circ}$ admits an infinite one-parameter
family of invariant Levi-flat level hypersurfaces of the form
\[
\{\operatorname{Re}(aF)=c\},
\qquad c\in\mathbb R.
\]
\end{enumerate}
In particular, the collinearity condition
$a\lambda_j\in\RR$ for all $j$ implies these equivalent statements.
When the actual periods are generated by meridians about distinct
reduced polar components with respective residues $\lambda_j$, the
collinearity condition is also necessary.
\end{proposition}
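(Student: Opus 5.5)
The plan is to prove the cycle (i)$\Rightarrow$(ii)$\Rightarrow$(iii)$\Rightarrow$(i). We write $F$ for the multivalued primitive of $\eta$ on $X^\circ$, so $dF=\eta$ locally. Two continuations of $F$ along paths in $X^\circ$ differ by some $\tau\in\Pi_\eta$.

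\emph{(i)$\Rightarrow$(ii).} Take $a$ as in (i). Analytic continuation of $aF$ around a loop changes it by $a\tau$, and $\operatorname{Re}(a\tau)=0$. Hence $h=\operatorname{Re}(aF)$ is a well-defined real-analytic function on $X^\circ$. Replacing $\eta$ by $a\eta$ gives (ii).

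\emph{(ii)$\Rightarrow$(iii).} Assume $h=\operatorname{Re}(aF)$ is single-valued. It is pluriharmonic, and $2\partial h=a\eta$. So $dh$ vanishes exactly where $\eta$ does, i.e.\ on $\Sing(\mathcal F_\eta)\cap X^\circ$. Every level set $\{h=c\}$ is then saturated by the local fibers of $aF$, hence invariant by $\mathcal F_\eta$. Off the singular set it is a smooth real hypersurface whose complex tangent space is $\ker\eta$, so it is Levi-flat, as in the introduction's computation.

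I would check two further points. First, $h$ is nonconstant: otherwise $a\eta=2\partial h=0$. Hence $h(X^\circ)$ contains an interval, and infinitely many values $c$ give nonempty levels. Second, for $c$ in the image the levels are of pure real codimension one. Nonemptiness is the key point here; on a connected $X^\circ$ the image of $h$ is an interval, so "$c\in\RR$" in (iii) should be read as $c$ ranging over this interval.

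\emph{(iii)$\Rightarrow$(i).} Suppose the sets $\{\operatorname{Re}(aF)=c\}$ form a family of well-defined subsets of $X^\circ$. Fix a point $x_0$ and a loop $\sigma$ based at $x_0$ with period $\tau$. Continuation of $aF$ along $\sigma$ replaces it by $aF+a\tau$. The level through $x_0$ must be the same set, computed with either branch. Locally near $x_0$ it is both $\{\operatorname{Re}(aF)=c\}$ and $\{\operatorname{Re}(aF)=c-\operatorname{Re}(a\tau)\}$. Since $aF$ is a local submersion, these two germs coincide only if $\operatorname{Re}(a\tau)=0$.

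This is the step I expect to need the most care. One has to phrase (iii) precisely: the family is indexed by $c$ and globally well defined, so $\operatorname{Re}(aF)$ is at least defined up to a consistent normalization of $c$. An alternative route I would fall back on is that a single-valued real part is exactly condition (ii), giving the cycle directly.

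\emph{Collinearity.} If $a\lambda_j\in\RR$ for all $j$, then every period is an integer combination of the $2\pi i\lambda_j$ (or lies in the actual period group generated by residues along meridians). So $a\tau\in i\RR$, which is (i).

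For necessity, assume the periods are generated by meridians $\delta_j$ around distinct reduced polar components, with $\int_{\delta_j}\eta=2\pi i\lambda_j$. Then (i) applied to $\tau=2\pi i\lambda_j$ gives $\operatorname{Re}(2\pi i a\lambda_j)=0$, i.e.\ $\operatorname{Im}(a\lambda_j)=0$, so $a\lambda_j\in\RR$ for each $j$. The only input needed is that each meridian period actually lies in $\Pi_\eta$ and equals $2\pi i\lambda_j$, which is the residue theorem for the reduced component.
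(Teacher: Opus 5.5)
Your proposal is correct and follows essentially the paper's route. Purely imaginary periods make $\operatorname{Re}(aF)$ single-valued with invariant Levi-flat regular levels, a branch-independent labelling of the levels forces $\operatorname{Re}(a\tau)=0$, and the collinearity statements come from integrality of the winding numbers of the $f_j$ and from the meridian periods $2\pi i\lambda_j$; your extra care (nonconstancy of $h$, $c$ ranging over the image, explicit branch-independent reading of (iii)) sharpens the paper's terse argument, but note that $\eta$ may vanish along divisorial components inside $X^\circ$ not contained in $\Sing(\mathcal F_\eta)$, so your claim that $dh$ vanishes only on $\Sing(\mathcal F_\eta)\cap X^\circ$ is too strong and you should, as the paper does, use the regular levels of $h$.
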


\begin{proof}
The equivalence of \textup{(i)} and \textup{(ii)} follows from the
actual period group. If all periods become purely imaginary after
multiplication by $a$, then $\operatorname{Re}(aF)$ is single-valued and
its regular levels give \textup{(iii)}. Conversely, the expression in
\textup{(iii)} can be globally single-valued only if every period of
$aF$ has zero real part. If $a\lambda_j$ is real for every $j$,
then every period of $a\eta$ is purely imaginary, since the winding
numbers of the functions $f_j$ are integers. Under the final
meridian-generation assumption, the periods $2\pi i\lambda_j$ occur
among the generators, so the same condition is also necessary.
\end{proof}

Notice that Proposition~\ref{prop:period-compatible-leviflats} is more
general in the singular category: a non-dense period group need not be
contained in a real line. For example, a rank-two lattice preserves
singular periodic real-analytic sets, and therefore produces singular
invariant Levi-flats on $X^\circ$, although the residues are not
collinear over $\mathbb R$.

\subsection{A transverse separation model}
\label{subsec:transverse-separation}
Next we introduce and illustrate the notions in Corollary~\ref{cor:local-separation}.
\begin{definition}[Separation of nearby Levi leaves]
\label{definition:transvseparation}
Let $p$ be a dicritical singularity of $\F$ contained in an invariant irreducible real-analytic variety of pure real codimension one $M$ with
$\Sing(M)\subset\Sing(\F)$. Let $U_p$ be a neighborhood of $p$,
let $S$ be a real-analytic one-dimensional manifold, and let
\[
h:M_{\reg}\cap U_p\longrightarrow S
\]
be a real-analytic map on the regular part near $p$. We say that $h$
\emph{separates the nearby Levi leaves at $p$} if $h$ is constant on
each local Levi leaf sufficiently near $p$ and the values assigned to
distinct such leaves are distinct. Equivalently, the induced map on
the set of nearby local Levi leaves is injective. The target $S$ may,
in particular, be $\RR$ or $\mathbb{RP}^1$.
\end{definition}

\begin{example}
Consider in $\CC^2$ the local meromorphic first integral
\[
F(x,y)=\frac{x}{y}
\]
and the irreducible real-analytic variety of pure real codimension one 
\[
M=\overline{\left\{(x,y)\in\CC^2;\ y\neq0,\quad
\frac{x}{y}\in\RR\right\}}
=\{x\overline y-\overline x y=0\}.
\]
Its Levi leaves are
\[
L_\lambda=\{x-\lambda y=0\}\cap M_{\reg}
\quad(\lambda\in\RR),\qquad
L_\infty=\{y=0\}\cap M_{\reg},
\]
and the origin is a pencil singularity for $M$. On all of
$M_{\reg}$ near the origin the real-analytic map
\[
h(x,y)=[x:y]\in\mathbb RP^1
\]
is constant on the local pieces of the Levi leaves and satisfies
\[
h|_{L_\lambda}=\lambda.
\]
In the chart $y\neq0$ its real coordinate is
$\operatorname{Re}(x/y)$; the chart $x\neq0$ covers the parameter
$\lambda=\infty$. Thus $h$ separates all nearby Levi leaves. Notice that
$h$ does not extend continuously across the dicritical center $(0,0)$,
as is forced by the pencil-singularity property.
\end{example}

\section{Continuation along the regular Levi foliation}
\label{sec:continuation}

This section constructs the first integral on a neighborhood of the regular
part of the invariant hypersurface. The pencil-singularity condition is
used to show that holonomy continuation of the local meromorphic parameter
is single-valued.

\begin{proposition}
\label{prop:nearMreg}
Let $X$ be a connected complex manifold, let $\F$ be a codimension-one
holomorphic foliation on $X$, and let $M\subset X$ be a connected irreducible real-analytic variety of pure real codimension one invariant by $\F$. Let
\[
p\in M\cap\Sing(\F)
\]
be a pencil singularity for $M$, and let
\[
F_p=\frac{f}{g}:U_p\dashrightarrow\PP^1
\]
be a minimal local meromorphic first integral for the germ $(\F,p)$.
Then $F_p$ extends by holonomy to a single-valued meromorphic first
integral on an open neighborhood of $M_{\reg}$.
\end{proposition}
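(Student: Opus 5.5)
We build the extension by attaching to each Levi leaf the pencil value it carries near $p$, and then checking that this assignment is locally a meromorphic (in fact holomorphic, $\PP^1$-valued) first integral on a neighborhood of $M_{\reg}$.

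\textbf{Step 1: the leaf parameter.} By condition (ii) in the definition of pencil singularity, every generic global Levi leaf $L\subset M_{\reg}$ determines a unique value $\lambda(L)\in\PP^1$ with $L\cap U_p\subset V_{\lambda(L)}$ near $p$. By condition (i) every Levi leaf accumulates at $p$, so $\lambda$ is defined on all generic leaves. We then define
\[
\Lambda:M_{\reg}^{\mathrm{gen}}\longrightarrow\PP^1,\qquad \Lambda(x)=\lambda(L_x),
\]
where $M_{\reg}^{\mathrm{gen}}$ is the union of the generic Levi leaves. This function is constant along leaves by construction. Single-valuedness is automatic here, because $\lambda(L)$ is attached to the global leaf $L$ and not to a path. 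This is precisely the point where the pencil condition replaces a monodromy computation: continuing $F_p$ along any path in $L$ that leaves $U_p$ and returns lands on a component of $L\cap U_p$, and all such components lie in the same fibre $V_{\lambda(L)}$.

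\textbf{Step 2: local holomorphic extension near a regular point.} Take $x\in M_{\reg}\setminus\Sing(\F)$ and a holomorphic flow box $W\ni x$ for $\F$ with transversal disc $T\cong\mathbb D$ and submersion $\pi:W\to T$. As in Lemma~\ref{lem:leviflat-transverse-set}, $M\cap W=\pi^{-1}(\gamma)$ for a real-analytic curve $\gamma\subset T$, and $\Lambda$ descends to a function $\lambda_W$ on (a dense subset of) $\gamma$. Next we show that $\lambda_W$ is real-analytic. Follow a fixed local plaque along a path in its global leaf to a point of $U_p$, and transport the transversal $T$ along this path using the holonomy of $\F$ (holonomy of the regular foliation $\F$ in $X$, not only in $M$). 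This produces a holomorphic germ $h:T\to T'$, where $T'\subset U_p$ is a transversal, on which $F_p|_{T'}$ is holomorphic and nonconstant off the indeterminacy set. Since nearby plaques of $M$ in $W$ correspond under $h$ to nearby plaques of $M\cap U_p$ of the same global leaves, we get $\lambda_W=F_p\circ h$ on $\gamma$. Consequently $\lambda_W$ is the restriction to $\gamma$ of the holomorphic map $F_p\circ h$, and $F_W:=F_p\circ h\circ\pi$ is a holomorphic first integral on $W$ extending $\Lambda$. Independence of the path follows because two choices give holomorphic functions on $T$ that agree on the real-analytic curve $\gamma$ (generic part), hence agree by the identity principle. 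This gluing is the crucial use of genericity and irreducibility.

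\textbf{Step 3: gluing.} On overlaps $W\cap W'$ the maps $F_W$ and $F_{W'}$ agree on $M\cap W\cap W'$, which is a real hypersurface and hence a uniqueness set for holomorphic functions. They therefore agree on the connected overlap (shrinking the boxes if needed). This yields a single-valued holomorphic map to $\PP^1$ on a neighborhood of $M_{\reg}\setminus\Sing(\F)$, which coincides with $F_p$ near $p$.

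\textbf{Step 4: points of $M_{\reg}\cap\Sing(\F)$.} The remaining points of $M_{\reg}$ are those in $\Sing(\F)$. This set has complex codimension $\ge2$ in $X$ (singular sets of foliations are taken of codimension $\ge2$), so we try a Levi-type (Hartogs) extension of meromorphic functions across it. The difficulty is that we only have the function in a one-sided, thin neighborhood of $M$ rather than a full punctured neighborhood. To handle this, we would instead observe that near such a point $q$, $\F$ is tangent to $M$, which is smooth at $q$, so by Cerveau--Lins Neto \cite{CLN} the germ $(\F,q)$ has a meromorphic first integral $F_q$. Our extension is then a first integral on the complement of $\Sing(\F)$ near $q$, and hence has the form $R\circ F_q$ with $R$ rational. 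This factorization follows from the minimality/L\"uroth discussion in Section~\ref{section:definitions}, applied on a connected neighborhood minus a codimension-two set. Therefore it extends meromorphically across $q$.

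The main obstacle is Step 2: showing that the plaque-to-value map coincides with $F_p\circ h$ for a holonomy germ $h$ that is defined on a whole transversal disc, and is not merely defined along $\gamma$. This requires that the transported transversal stays inside $U_p$ off the indeterminacy set. To secure it, we would first choose the path within $L$ avoiding $\Sing(\F)$ (possible since $L\subset M_{\reg}\setminus\Sing(\F)$), shrink $T$ accordingly, and treat non-generic leaves afterwards by density and the identity principle.
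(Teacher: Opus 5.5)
Your Steps 1--3 are essentially the paper's proof. You transport $F_p$ by $\F$-holonomy along a path in the Levi leaf into $U_p$, get path independence from the pencil constancy on generic leaves together with the identity principle on the transverse real curve $\Sigma_x\cap M$, and glue over flow boxes.

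Your Step 4 is superfluous, and as written it is not valid. The continued function is only known on a thin neighborhood of $M$, not on a punctured neighborhood of $q$, so the factorization through $R\circ F_q$ does not follow. But no such $q$ exists, so nothing is lost. At a smooth point of $M$, Cartan's normal form gives $M=\{\operatorname{Im} w=0\}$, and tangency on the dense set $M\setminus\Sing(\F)$ forces $\omega\wedge dw\equiv0$. Hence $\omega=u\,dw$, and $u$ must be a unit because $\operatorname{codim}\Sing(\F)\ge2$. Thus $M_{\reg}\cap\Sing(\F)=\varnothing$, and Steps 1--3 already cover all of $M_{\reg}$.
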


\begin{proof}[Proof of Proposition~\ref{prop:nearMreg}]
Fix $x\in M_{\reg}$ and denote by $L_x$ the Levi leaf through $x$.
By the accumulation hypothesis,
\[
p\in\overline{L_x}.
\]
Hence $L_x$ meets $U_p$.  Choose a regular point
$y\in L_x\cap U_p$ and a path $\gamma\subset L_x$ joining $x$ to
$y$.  Choose small complex transversals $\Sigma_x$ and $\Sigma_y$.
Holonomy along $\gamma$ defines
\[
h_\gamma:\Sigma_x\longrightarrow\Sigma_y.
\]
The meromorphic function
\[
g_\gamma=F_p\circ h_\gamma
\]
on $\Sigma_x$ extends constantly along plaques to a meromorphic first
integral $G_\gamma$ on a flow box around $x$.

Let $\gamma_1,\gamma_2$ be two such choices, and let $G_1,G_2$ be the
corresponding first integrals.  Put
\[
I=\Sigma_x\cap M_{\reg}.
\]
For generic $z\in I$, the two continuations follow the same global
Levi leaf.  By the constancy hypothesis,
\[
G_1(z)=G_2(z)
\]
for generic $z\in I$.  Since $G_1|_{\Sigma_x}$ and
$G_2|_{\Sigma_x}$ are one-variable meromorphic functions and agree on
a real set with an accumulation point,
\[
G_1|_{\Sigma_x}\equiv G_2|_{\Sigma_x}.
\]
Both are constant along plaques, hence $G_1\equiv G_2$ in the flow
box.  The same argument on overlaps gives a meromorphic function
\[
G\in\Mcal(N_{\reg})
\]
on an open neighborhood $N_{\reg}$ of $M_{\reg}$, and
\[
dG\wedge\omega=0
\]
for every local defining integrable $1$-form $\omega$ of $\F$.
\end{proof}

\section{The irreducible dicritical component}
\label{sec:dicritical-component}

We now extend the meromorphic first integral through the irreducible
dicritical component. The local-product structure provides a fixed
transverse analytic type on its generic locus, while the
pencil-singularity condition eliminates the resulting projective
monodromy.

\begin{definition}[Generic local product type]
\label{def:generic-ltp}
Let $D$ be an irreducible component of $\Sing(\F)$. Following the local
transversely product viewpoint of Lins Neto \cite{LNproduct}, we say that
$D$ is \emph{generically of local product type} if there exists a proper
complex-analytic subset
\[
E_D\subsetneq D,
\qquad \Sing(D)\subset E_D,
\]
such that
\[
D^\circ:=D\setminus E_D
\]
is connected and dense, and there exists a fixed germ $\mathcal G$ of a
singular foliation in a transverse two-dimensional space such that, near
every point of $D^\circ$, the foliation $\F$ is analytically the pull-back
of $\mathcal G$ by a submersion. We call $D^\circ$ the generic
local-product locus and $E_D$ the exceptional locus.
\end{definition}

Thus the transverse analytic type is constant along $D^\circ$, while the
exceptional locus is analytic and has positive codimension in $D$. This
formulation is compatible with the classical Kupka situation. Indeed, on
a codimension-two component the Kupka locus carries the local product
structure, while the non-Kupka locus is analytic: locally, if $\omega$
defines the foliation, it is cut out on the singular component by
$d\omega=0$. See \cite{CLNKupka,LNproduct,CAS}.

Thus one may choose minimal local meromorphic first integrals
\[
H_q:U_q\dashrightarrow\PP^1,\qquad q\in D^\circ,
\]
whose changes of transverse parameter are elements of
$\operatorname{PGL}(2,\CC)$.

\begin{lemma}
\label{lem:product-propagation}
Let $L\subset M_{\reg}$ be a Levi leaf. If
\[
\overline L\cap D^\circ\neq\varnothing,
\]
then
\[
D^\circ\subset\overline L.
\]
\end{lemma}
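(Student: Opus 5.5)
The plan is an open–closed argument inside $D^\circ$. Set
\[
A=\overline L\cap D^\circ .
\]
By hypothesis $A$ is nonempty, and it is closed in $D^\circ$ because $\overline L$ is closed. Since $D^\circ$ is connected by Definition~\ref{def:generic-ltp}, it suffices to prove that $A$ is open in $D^\circ$. We then conclude $A=D^\circ$, that is, $D^\circ\subset\overline L$.

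To prove openness, fix $q\in A$ and use the local product structure. There is a neighborhood $U_q$ with holomorphic coordinates $(u,v)\in\Delta^{n-2}\times\Delta^2$ in which $D^\circ\cap U_q=\{v=0\}$ and $\F|_{U_q}=\pi^*\mathcal G$, where $\pi(u,v)=v$ and $\mathcal G$ is the fixed transverse germ. The leaves of $\F|_{U_q}$ are then products $\Delta^{n-2}\times\ell$, with $\ell$ a leaf of $\mathcal G$ on $\Delta^2\setminus\{0\}$. The invariant set $M\cap U_q$ is saturated by these leaves off $\Sing(\F)$. Its regular part is therefore of the form $\Delta^{n-2}\times\mu$, where $\mu$ is a real-analytic $\mathcal G$-invariant curve-type set in $\Delta^2$, and each local Levi plaque of $L$ in $U_q$ has the form $\Delta^{n-2}\times\ell$ for some leaf $\ell$ of $\mathcal G$ contained in $\mu$.

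Since $q\in\overline L$, there are points $(u_k,v_k)\in L$ with $u_k\to u(q)$ and $v_k\to0$. Let $\ell_k$ be the $\mathcal G$-leaf through $v_k$. Then $\Delta^{n-2}\times\ell_k$ is contained in the connected component of $L\cap U_q$ through $(u_k,v_k)$, because $L$ is a leaf of $\F$ and these plaques lie in $M_{\reg}$ (shrinking $U_q$ if necessary, using $\Sing(M)\subset\Sing(\F)$). For any $q'=(u',0)\in D^\circ\cap U_q$, the points $(u',v_k)$ lie in $L$ and converge to $q'$. Hence $D^\circ\cap U_q\subset\overline L$, and $A$ is open.

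The main obstacle is making the product description of $M$ and of the plaques of $L$ precise near $q$. Two points need care. First, we need $L\cap U_q$ to contain the full product plaques $\Delta^{n-2}\times\ell_k$, not only pieces of them. This holds because a leaf of $\F|_{U_q}$ is connected and is contained in a single leaf of $\F$. Second, the $(u_k,v_k)$ must avoid $\Sing(\F)=\{v=0\}$ locally, which is immediate since they lie on $L\subset M_{\reg}\setminus\Sing(\F)$. No dicriticity or pencil assumption is needed. The lemma uses only the local product structure and the connectedness of $D^\circ$.
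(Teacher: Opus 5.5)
Your proposal is correct and follows the paper's proof: $\overline L\cap D^\circ$ is nonempty and closed in the connected set $D^\circ$, and openness comes from the local product coordinates at a point $q\in\overline L\cap D^\circ$. Your write-up makes explicit what the paper states in one line, namely that the full plaque $\Delta^{n-2}\times\ell_k$ lies in $L$, so its points $(u',v_k)$ converge to every $q'=(u',0)\in D^\circ\cap U_q$. (The description of $M\cap U_q$ as $\Delta^{n-2}\times\mu$ is not needed, and there $\mu$ is a real hypersurface of $\Delta^2$ rather than a curve.)
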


\begin{proof}
Set $S_L=\overline L\cap D^\circ$. It is closed in $D^\circ$. Let
$q\in S_L$. In local-product coordinates near $q$, the foliation is the
product of its fixed transverse germ with the regular foliation along
$D^\circ$. Since $L$ accumulates at $q$, the closure of the corresponding
local leaf contains the whole local plaque of $D^\circ$ through $q$.
Hence $S_L$ is open in $D^\circ$. Since $D^\circ$ is connected,
$S_L=D^\circ$.
\end{proof}

\begin{lemma}
\label{lem:monodromy}
Let $D$ be an irreducible dicritical component which is generically of
local product type, let $D^\circ$ be its connected generic local-product
locus, and fix $p\in D^\circ$. Suppose that a minimal local meromorphic first integral
\[
F_p:U_p\dashrightarrow\PP^1
\]
has the property that, for every generic global Levi leaf $L$
represented near $p$, the restriction $F_p|_{L\cap U_p}$ is constant. Then the projective
monodromy of the transverse $\PP^1$-parameter along $D^\circ$ is trivial.
\end{lemma}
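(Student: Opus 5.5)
The strategy is to realize the projective monodromy as a representation of $\pi_1(D^\circ,p)$ into $\operatorname{PGL}(2,\CC)$. We then show that each monodromy element fixes infinitely many points of $\PP^1$, hence is the identity.

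\textbf{Step 1: the monodromy representation.} Let $\beta$ be a loop in $D^\circ$ based at $p$. Cover $\beta$ by finitely many local-product charts $U_{q_0}=U_p,U_{q_1},\dots,U_{q_k}=U_p$ with minimal first integrals $H_{q_i}$. By Definition~\ref{def:generic-ltp}, on each overlap the foliation is the pull-back of the same transverse germ $\mathcal G$. So $H_{q_{i+1}}=A_i\circ H_{q_i}$ on $U_{q_i}\cap U_{q_{i+1}}$ for some $A_i\in\operatorname{PGL}(2,\CC)$; this uses the uniqueness of minimal first integrals up to Möbius maps (Section~\ref{section:definitions}). Continuing $F_p$ around $\beta$ returns $A_\beta\circ F_p$ with $A_\beta=A_{k-1}\circ\cdots\circ A_0$. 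Since the local first integral in a product chart is constant along the $D^\circ$-direction, $A_\beta$ depends only on the homotopy class of $\beta$. We must show $A_\beta=\operatorname{id}$.

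\textbf{Step 2: generic Levi leaves give fixed points.} Let $L$ be a generic global Levi leaf whose local pieces near $p$ lie in $V_{\lambda(L)}$. By Lemma~\ref{lem:product-propagation}, $\overline L\supset D^\circ$, so $L$ enters every chart $U_{q_i}$. The goal is that along the chain of charts the local pieces of the single global leaf $L$ lie in fibers $V^{(i)}_{\mu_i}$ of $H_{q_i}$ with $\mu_{i+1}=A_i(\mu_i)$. The point is that a local piece of $L$ in $U_{q_i}$ near the overlap is a product (plaque of $\mathcal G$)$\times$(piece of $D^\circ$), so it passes into $U_{q_{i+1}}$ as a piece of the same leaf. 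After returning to $U_p$ we get a local piece of $L$ contained in $V_{A_\beta(\lambda(L))}$. The hypothesis that $F_p|_{L\cap U_p}$ is constant equal to $\lambda(L)$ then gives
\[
A_\beta(\lambda(L))=\lambda(L).
\]

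\textbf{Step 3: conclusion.} The values $\lambda(L)$, as $L$ ranges over generic global Levi leaves, fill a set with infinitely many points. Indeed, $M\cap U_p$ has real codimension one and is a union of fibres $V_\lambda$, and $F_p$ is nonconstant, so near a regular point of $M$ the image $F_p(M\cap U_p)$ contains a real arc. Removing nongeneric leaves discards only a meagre or measure-zero set of parameters. A Möbius transformation with more than two fixed points is the identity, so $A_\beta=\operatorname{id}$ for every $\beta$, and the monodromy is trivial.

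The main obstacle is Step 2. We must justify that continuing the parameter along $D^\circ$ tracks the same global leaf, i.e.\ that the piece of $L$ in $U_{q_i}$ continues into $U_{q_{i+1}}$ rather than jumping to a different leaf. Here I would use the product structure: in each chart the local Levi pieces are saturated in the $D^\circ$-direction. Any residual ambiguity, for instance several local components of $L\cap U_p$, is absorbed by condition (ii) of the pencil-singularity definition, which says all local components near $p$ lie in the single fibre $V_{\lambda(L)}$.
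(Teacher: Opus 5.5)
Your proposal is correct and follows the paper's proof. Like the paper, you treat continuation around loops in $D^\circ$ as a representation $\pi_1(D^\circ,p)\to\operatorname{PGL}(2,\CC)$, use the constancy of $F_p$ on generic global Levi leaves to get $\rho(\gamma)(\lambda(L))=\lambda(L)$ for the infinitely many parameters supplied by the local Levi-flat, and conclude because a nonidentity M\"obius map has at most two fixed points. Your Step~2, which tracks a product-type piece of $L$ through the chain of local-product charts back to a piece of the same Levi leaf, just spells out what the paper asserts in one line ("the same leaf is assigned the parameter $\rho(\gamma)(\lambda(L))$").
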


\begin{proof}[Proof of Lemma~\ref{lem:monodromy}]
Normalize the parameter at $p$ by $H_p=F_p$.  Analytic continuation
around a loop $\gamma\in\pi_1(D^\circ,p)$ gives an element
\[
\rho(\gamma)\in\operatorname{PGL}(2,\CC).
\]
For a generic Levi leaf $L$ represented near $p$, let
\[
\lambda(L)=H_p(L).
\]
After continuation around $\gamma$, the same leaf is assigned the
parameter $\rho(\gamma)(\lambda(L))$.  The constancy hypothesis at $p$ gives
\[
\rho(\gamma)(\lambda(L))=\lambda(L)
\]
for every generic Levi leaf.  The local Levi-flat supplies infinitely
many such parameters with an accumulation point.  A nonidentity
M\"obius transformation has at most two fixed points.  Therefore
$\rho(\gamma)=\operatorname{id}$ for every $\gamma$, as claimed.
\end{proof}

The local first integrals $H_q$, $q\in D^\circ$, can consequently be
normalized so that
\[
\widehat H_q=A_q\circ H_q,\qquad
A_q\in\operatorname{PGL}(2,\CC),
\]
and the normalized parameter agrees with $H_p$ on generic global Levi
leaves.

\begin{definition}[No transverse degree jump]
\label{def:no-degree-jump}
We say that the generic transverse parameter along $D^\circ$ has
\emph{no transverse degree jump along $D$} if, for every
$q\in D\setminus D^\circ$ and every minimal local meromorphic first
integral
\[
H_q:U_q\dashrightarrow\PP^1
\]
of the germ $(\F,q)$, the restriction of $H_q$ near a generic point
$r\in D^\circ\cap U_q$ is still minimal. Equivalently, if
$\widehat H_r$ is a normalized minimal transverse parameter at such an
$r$ and
\[
H_q=R_q\circ\widehat H_r,
\]
then $\deg R_q=1$.
\end{definition}

\begin{proposition}
\label{prop:dicritical-extension}
The meromorphic first integral $G$ constructed in
Proposition~\ref{prop:nearMreg} extends meromorphically through every
point of $D^\circ$. If, in addition, the no-transverse-degree-jump
condition of Definition~\ref{def:no-degree-jump} holds, then $G$
extends meromorphically through every point of $D$.
\end{proposition}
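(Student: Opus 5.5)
The plan is to extend $G$ first across $D^\circ$ and then across $E_D=D\setminus D^\circ$, using the local product structure and then a Levi-type (Hartogs) extension argument.

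\emph{Step 1: a normalized parameter on $D^\circ$.} Fix $q\in D^\circ$. Definition~\ref{def:generic-ltp} gives local-product coordinates on a neighborhood $U_q$, so $\F|_{U_q}$ is the pull-back of the fixed transverse germ $\mathcal G$. Since $D$ is dicritical, $\mathcal G$ has a minimal meromorphic first integral. Pulling it back produces a minimal first integral $H_q$ on $U_q$. By Lemma~\ref{lem:monodromy}, the projective monodromy along $D^\circ$ is trivial. Hence continuing $H_p=F_p$ along any path in $D^\circ$ from $p$ to $q$ yields a well-defined normalized parameter $\widehat H_q=A_q\circ H_q$, independent of the path. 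Covering $D^\circ$ by such charts, we obtain a meromorphic function $\widehat H$ on a neighborhood $W$ of $D^\circ$.

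\emph{Step 2: $\widehat H$ agrees with $G$.} On $W\cap N_{\reg}$, I would show that $G$ and $\widehat H$ coincide. Every Levi leaf accumulates at $p$, so by Lemma~\ref{lem:product-propagation} it accumulates along all of $D^\circ$. In particular, each $U_q$ contains pieces of generic global Levi leaves. On such a leaf $L$:
\begin{itemize}
\item $G$ takes the value $\lambda(L)=F_p(L)$, by the construction in Proposition~\ref{prop:nearMreg};
\item $\widehat H_q$ also takes the value $\lambda(L)$, by the normalization and the pencil condition at $p$ transported along $D^\circ$.
\end{itemize}
On a transversal $\Sigma$ near $q$, the set $\Sigma\cap M_{\reg}$ is a real curve. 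As in the proof of Proposition~\ref{prop:nearMreg}, two one-variable meromorphic functions agreeing on a real set with an accumulation point are equal. Both are constant along plaques, so $G=\widehat H_q$ on the connected component of $U_q\setminus(D\cup\Sing M)$ meeting $M$.

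\emph{Step 3: extension through $D^\circ$.} Gluing $G$ with the $\widehat H_q$ defines a meromorphic extension of $G$ across $D^\circ$. The main subtlety is that $N_{\reg}\cap U_q$ may not be connected. I would handle this by taking $U_q$ to be a product polydisc, so that $U_q\setminus\Sing(\F)$ is connected. Then $\widehat H_q$ is defined there and is the unique continuation.

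\emph{Step 4: extension through $E_D$ under no degree jump.} Let $q\in E_D$ and take a minimal local first integral $H_q$. Such an integral exists: near $q$, $\F$ has meromorphic first integrals on the complement of the codimension-$\geq 3$ set $E_D$, namely $G$. By Levi's extension theorem for meromorphic functions across analytic sets of codimension $\geq 2$, $G$ extends meromorphically across $E_D$, since $E_D$ has codimension $\ge 1$ in $D$ and hence codimension $\ge3$ in $X$. This step is then essentially formal.

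The no-degree-jump hypothesis is needed to identify the extension with the minimal parameter and keep the normalization coherent. On $U_q$, the extended function is a first integral, so $G=R\circ H_q$. Near generic $r\in D^\circ\cap U_q$, $G=\widehat H_r$ is minimal, while $H_q=R_q\circ\widehat H_r$ with $\deg R_q=1$ by Definition~\ref{def:no-degree-jump}. Hence $R$ is Möbius and $G$ is minimal at $q$.

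I expect the main obstacle to be Step 2/3: justifying that the germs of Levi leaves near each $q\in D^\circ$ are enough to pin down $\widehat H_q$, and that $M$ really meets each local-product chart in a real curve's worth of leaves. If Levi's theorem is unavailable for $E_D$ in low codimension, I would fall back on the no-degree-jump identification: define $G:=A\circ H_q$ near $q$ and check agreement on $U_q\cap D^\circ$.
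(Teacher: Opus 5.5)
Steps 1--3 are essentially the paper's treatment of $D^\circ$. The normalized parameter $\widehat H_q$ takes the same values as $G$ on generic Levi leaves, so it coincides with $G$ near $M_{\reg}\cap U_q$. Being meromorphic on all of $U_q$, it extends $G$ through $q$. One small correction: in Step 1 the first integral of $\mathcal G$ does not follow from dicriticity alone. It comes from restricting $F_p$ to a transverse section at $p$, or from the Cerveau--Lins Neto theorem.

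The genuine gap is Step 4. Levi's extension theorem applies to a function meromorphic on $U\setminus A$ for a full neighborhood $U$ of $q$. Nothing in your argument shows that $G$ is defined on a punctured neighborhood $U_q\setminus E_D$. As constructed, $G$ lives only on a neighborhood of $M_{\reg}$ together with neighborhoods of points of $D^\circ$. Near $q\in E_D$ this open set has the following defects:
\begin{enumerate}
\item it may pinch toward $E_D$;
\item it need not contain the points of $U_q$ away from $M$;
\item at this stage it does not contain points of other components of $M\cap\Sing(\F)$ through $q$, which are only treated in Lemma~\ref{lem:nondicritical}.
\end{enumerate}
So the obstacle is not the codimension of $E_D$ but the domain of $G$. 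Your existence argument for $H_q$, which you derive from this extension, is therefore unsupported as well.

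The step also bypasses exactly the point where Definition~\ref{def:no-degree-jump} is needed. A priori, near generic points $r\in D^\circ\cap U_q$, the function $G=\widehat H_r$ is only a local branch of $R_q^{-1}\circ H_q$. When $\deg R_q\ge2$, nothing in your argument makes such a branch come from a function meromorphic on all of $U_q$; compare Example~\ref{ex:degree-jump}, where the branch $x/(a(z)y)$ of $H^{1/d}$ is not single-valued near $q$.

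The repair is your fallback, which is exactly the paper's proof, provided $H_q$ is obtained independently of $G$:
\begin{enumerate}
\item The Cerveau--Lins Neto theorem, applied to the invariant Levi-flat germ $(M,q)$, gives a minimal first integral $H_q$ on $U_q$.
\item At a generic $r\in D^\circ\cap U_q$, the no-degree-jump condition forces $\deg R_q=1$. Hence $\widehat H_r=A_q\circ H_q$ with $A_q\in\operatorname{PGL}(2,\CC)$.
\item Then $A_q\circ H_q$ is meromorphic on all of $U_q$ and agrees with $G$ on a nonempty open set, so it extends $G$ through $q$.
\end{enumerate}
The minimality of the extension at $q$, which you also derive, is not needed for the statement.
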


\begin{proof}[Proof of Proposition~\ref{prop:dicritical-extension}]
Let $q\in D^\circ$. On the nonempty overlap of $U_q$ with
$N_{\reg}$ the functions $G$ and $\widehat H_q$ are first integrals
of the same foliation and have the same values on generic Levi leaves.
Hence they agree. Since $\widehat H_q$ is meromorphic on $U_q$, it
extends $G$ through $q$.

Now let $q\in D\setminus D^\circ$ and assume the no-transverse-degree-jump
condition. By the local theorem of Cerveau--Lins Neto there is a minimal
local meromorphic first integral
\[
H_q:U_q\dashrightarrow\PP^1.
\]
Choose a generic point $r\in D^\circ\cap U_q$. By hypothesis the
restriction of $H_q$ near $r$ is minimal. Since $\widehat H_r$ is also
minimal, the two first integrals differ by a M\"obius transformation:
\[
\widehat H_r=A_q\circ H_q,
\qquad A_q\in\operatorname{PGL}(2,\CC).
\]
The function $A_q\circ H_q$ is meromorphic on $U_q$ and agrees with
$G$ on a nonempty open set. Uniqueness of meromorphic continuation
therefore gives the extension through $q$.
\end{proof}

\begin{example}[A genuine transverse degree jump]
\label{ex:degree-jump}
The condition in Definition~\ref{def:no-degree-jump} is not automatic.
Let $d\ge2$ and consider in $(\CC^3,0)$ the meromorphic first integral
\[
H(x,y,z)=\frac{x^d}{y^d z}.
\]
The associated foliation is defined, after clearing denominators, by
\[
\omega=d yz\,dx-d xz\,dy-xy\,dz,
\]
and its singular set contains
\[
D=\{x=y=0\}.
\]
At $q=(0,0,0)$ the first integral $H$ is primitive:  For a generic $c\in\CC^*$, the fiber germ
\[
x^d-cy^dz=0
\]
is irreducible. Indeed, viewed as a polynomial in $x$ over
$\CC\{y,z\}$, it satisfies Eisenstein's criterion with respect to
the prime element $z$ \cite[\S 9.4]{DummitFoote}. On the other hand, at a nearby point
$p=(0,0,z_0)\in D$, $z_0\ne0$, choose a holomorphic branch
$a(z)$ with $a(z)^d=z$. Then
\[
G_p(x,y,z)=\frac{x}{a(z)y}
\]
is a primitive local meromorphic first integral and
\[
H=G_p^d.
\]
Thus a primitive first integral at the exceptional point $q$ restricts
near generic points of $D$ as a nonprimitive degree-$d$ quotient.
\end{example}

\section{Extension through the remaining singularities}
\label{sec:remaining-singularities}

After treating the dicritical component, it remains to cross the other
singular points of the foliation lying on $M$. These points are
non-dicritical, and their local holomorphic first integrals provide the
required extension.

Let
\[
q\in M\cap\Sing(\F),\qquad q\notin D.
\]
Such a singularity is non-dicritical.

\begin{lemma}
\label{lem:nondicritical}
The meromorphic first integral $G$ extends meromorphically to a
neighborhood of $q$.
\end{lemma}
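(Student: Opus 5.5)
The strategy is to combine the local theorem of Cerveau--Lins Neto with the fact that a non-dicritical germ has its first integral generated by a holomorphic one. Then $G$ is compared with that generator on a nonempty overlap and extended by uniqueness of meromorphic continuation, as in the proof of Proposition~\ref{prop:dicritical-extension}.

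\emph{Step 1 (local first integral).} Since $M$ is an invariant real-analytic variety of pure real codimension one through $q$, its germ at $q$ is a germ of Levi-flat (singular) hypersurface tangent to $(\F,q)$. By \cite{CLN} (cf.\ \cite{Brunella}), $(\F,q)$ admits a nonconstant meromorphic first integral, hence by \cite{CM} a minimal one, $H_q=f/g:U_q\dashrightarrow\PP^1$. Because $q\notin D$ and all dicritical singularities on $M$ lie in $D$, the germ is non-dicritical: only finitely many (in fact, locally, one) separatrices pass through $q$. After composing with a M\"obius map, the indeterminacy set $\{f=g=0\}$ must then be empty near $q$; otherwise the pencil $\{f-\lambda g=0\}$ would give infinitely many leaves through $q$. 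So we may take $H_q$ holomorphic, $H_q:U_q\to\CC$, with $H_q(q)=0$.

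\emph{Step 2 (comparison on the overlap).} Shrink $U_q$ so that $U_q\cap N_{\reg}\neq\varnothing$, where $N_{\reg}$ is the neighborhood of $M_{\reg}$ from Proposition~\ref{prop:nearMreg}. This is possible because $\Sing(M)\subset\Sing(\F)$ has positive codimension in $M$, so $M_{\reg}$ accumulates on $q$. On a connected component $W$ of $U_q\cap N_{\reg}$ meeting $M_{\reg}$, both $G$ and $H_q$ are first integrals of $\F$. By minimality, $G|_W=R\circ H_q|_W$ for some $R\in\CC(z)$. If the overlap is disconnected, I would instead work with a small flow box at a point of $M_{\reg}\cap U_q$ and argue there.

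\emph{Step 3 (extension).} The function $R\circ H_q$ is meromorphic on all of $U_q$ and agrees with $G$ on the open set $W$. It therefore furnishes a meromorphic extension of $G$ to a neighborhood of $q$, provided it is compatible with $G$ on every component of $U_q\cap N_{\reg}$.

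The main obstacle is this compatibility: the factor $R$ could a priori differ from component to component, since $G$ was built by holonomy continuation along Levi leaves and not from $H_q$. To handle it, I would choose $U_q$ to be a Milnor-type ball in which $M\cap U_q\setminus\Sing(\F)$ is connected modulo the level structure of $H_q$. For a non-dicritical germ, the fibers $\{H_q=c\}$ for $c\neq0$ are connected (using minimality and Stein factorization as in Subsection~\ref{subsec:primitive-pencils}). Hence each local Levi leaf in $U_q$ lies in a single fiber of $H_q$, and $G$ is constant on it. The two functions $G$ and $R\circ H_q$ then agree on generic points of the real curve $H_q(M\cap U_q)$, which has accumulation points. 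By the same identity argument for one-variable meromorphic functions used in Proposition~\ref{prop:nearMreg}, they coincide on every component, and the extension is well defined.
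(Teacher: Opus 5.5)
Your overall plan (take a minimal holomorphic first integral $H_q$ at the non-dicritical point, compare it with $G$, extend by uniqueness) is the paper's. However, the step that actually produces the extension is not justified.

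In Step~2 you assert $G|_W=R\circ H_q|_W$ with $R\in\CC(z)$ ``by minimality''. Minimality of $H_q$ only says that every meromorphic first integral of the \emph{germ} $(\F,q)$, i.e.\ one defined on a neighborhood of $q$, is a rational function of $H_q$. But $G$ is only known on $N_{\reg}$, a neighborhood of $M_{\reg}$, and $q\notin W$ (otherwise there would be nothing to prove). What you actually get, near a point $x'\in W$ where $H_q$ is a submersion, is a local factorization $G=\varphi\circ H_q$, with $\varphi$ a one-variable meromorphic germ at $H_q(x')$ and nothing more.

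If $H_q(x')\neq H_q(q)$, then $\varphi\circ H_q$ is defined only on the preimage of a small disc around $H_q(x')$, which does not contain $q$. So the claim in Step~3 that $R\circ H_q$ is meromorphic on all of $U_q$ fails. Your fallback of working in a small flow box at an arbitrary point of $M_{\reg}\cap U_q$ has the same defect. Your compatibility argument in Step~3 takes this $R$ as given, so it cannot close the gap.

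The missing idea is \emph{where} to compare. Since $q\in M$ and $M\cap U_q$ is saturated by the fibers of $H_q$, one has $M\cap U_q=\overline{H_q^{-1}(\gamma)}$ for a real-analytic curve germ $\gamma$ through $c=H_q(q)$. In particular the central fiber $H_q^{-1}(c)$ lies in $M$. Being a complex hypersurface through $q$, it is not contained in $\Sing(\F)$, which has codimension at least two. Hence it contains points $x\in M_{\reg}\setminus\Sing(\F)$ arbitrarily close to $q$, where $G$ is defined. Comparing at such an $x$ gives $G=\varphi\circ H_q$ with $\varphi$ a meromorphic germ at $c$ itself. Then $\varphi\circ H_q$ is meromorphic on a shrunken neighborhood of $q$; this is exactly the paper's proof.

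With this in hand, your connected-nearby-fibers argument becomes useful. Using the fibers $H_q^{-1}(c')$, $c'\in\gamma\setminus\{c\}$, which lie in $M_{\reg}\setminus\Sing(\F)$, it shows two things. First, $\varphi\circ H_q$ agrees with $G$ on the other components of $U_q\cap N_{\reg}$. Second, different components of $H_q^{-1}(c)\setminus\Sing(\F)$ yield the same germ $\varphi$. The paper covers this point only by appeal to uniqueness of continuation. Incidentally, your parenthetical that a non-dicritical germ has locally a single separatrix is false (take $H_q=xy$), though you never use it.
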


\begin{proof}[Proof of Lemma~\ref{lem:nondicritical}]
Since $q$ is non-dicritical, a minimal local meromorphic first integral
has no indeterminacy at $q$: otherwise a reduced representation $f/g$
with $f(q)=g(q)=0$ would produce infinitely many distinct local
separatrices through $q$. After shrinking $U_q$ and choosing the
appropriate target chart, we may therefore take a minimal holomorphic
first integral
\[
H_q:U_q\longrightarrow\CC,\qquad H_q(q)=c.
\]
On the regular locus of $H_q$, invariance of $M$ means that $M$ is
saturated by the level hypersurfaces of $H_q$. Hence the image of the
regular part of the relevant local component of $M$ in the one-dimensional
leaf space is a real-analytic curve germ. After selecting that component,
there is a germ of a real-analytic curve $\gamma\subset\CC$, with
$c\in\gamma$, such that
\[
M\cap U_q=\overline{H_q^{-1}(\gamma)}.
\]
In particular,
\[
H_q^{-1}(c)\subset M.
\]

The fiber $H_q^{-1}(c)$ is a complex hypersurface through $q$. Since
$\Sing(\F)$ has complex codimension at least two, it is not contained
in $\Sing(\F)$. We may therefore choose
\[
x\in H_q^{-1}(c)\cap M,
\qquad x\notin\Sing(\F),
\]
arbitrarily close to $q$. Since $\Sing(M)\subset\Sing(\F)$, we have
$x\in M_{\reg}$.

Near $x$, both $G$ and $H_q$ are first integrals of the same regular
codimension-one foliation. Since $H_q(x)=c$, there exists a one-variable
meromorphic germ $\varphi$ at $c$ such that
\[
G=\varphi\circ H_q
\]
on a neighborhood of $x$. Therefore $\varphi\circ H_q$ is meromorphic
near $q$ and agrees with $G$ on a nonempty open subset of their common
domain. By uniqueness of meromorphic continuation, it extends $G$
through $q$.
\end{proof}

\section{Semi-global integrability near the Levi-flat}
\label{sec:semiglobal}

The preceding continuation results are assembled here into a semi-global
integrability statement. They yield a single meromorphic first integral
on a neighborhood of the entire invariant hypersurface.

\begin{theorem}[Semi-global integrability]
\label{thm:semiglobal}
Let $X$ be a connected complex manifold of dimension $n\ge3$, let $\F$ be a
codimension-one holomorphic foliation on $X$, and let $M\subset X$ be
a connected irreducible real-analytic variety of pure real codimension one  invariant
by $\F$. Assume:

\begin{enumerate}[label=\textup{(\roman*)}]
\item
\[
\Sing(M)\subset\Sing(\F);
\]

\item all dicritical singularities of $\F$ contained in $M$ belong to
an irreducible codimension-two complex analytic subset
\[
D\subset M\cap\Sing(\F),
\]
and $D$ is generically of local product type;

\item there exists a pencil singularity for $M$ in the local-product
locus of $D$;

\item the generic transverse parameter along $D^\circ$ has no
transverse degree jump along $D$ in the sense of
Definition~\ref{def:no-degree-jump}.
\end{enumerate}

Then there exist an open neighborhood $N\supset M$ and a nonconstant
meromorphic first integral
\[
G\in\Mcal(N)
\]
for $\F|_N$.
\end{theorem}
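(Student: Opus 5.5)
The plan is to assemble the three continuation results already proved and check that they glue into one meromorphic function on a neighborhood of all of $M$. Write
\[
M=M_{\reg}\cup\Sing(M),\qquad \Sing(M)\subset M\cap\Sing(\F)\subset D\cup\bigl((M\cap\Sing(\F))\setminus D\bigr),
\]
and cover $M$ by three kinds of open sets. The first kind is the neighborhood $N_{\reg}$ of $M_{\reg}$ from Proposition~\ref{prop:nearMreg}. The second kind are the small neighborhoods $U_q$ of points $q\in D$. The third kind are neighborhoods $U_q$ of points $q\in(M\cap\Sing(\F))\setminus D$.

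\textbf{Step 1 (start on $M_{\reg}$).} Hypothesis (iii) gives a pencil singularity $p\in D^\circ$ for $M$. Applying Proposition~\ref{prop:nearMreg} to $p$ and a minimal $F_p$, I obtain a single-valued $G\in\Mcal(N_{\reg})$ with $dG\wedge\omega=0$. Nonconstancy is clear: near $p$, $G$ restricts to the continuation of $F_p$, which is nonconstant.

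\textbf{Step 2 (extend across $D$).} I use Proposition~\ref{prop:dicritical-extension} to extend $G$ meromorphically across every point of $D^\circ$. Lemma~\ref{lem:monodromy} is what guarantees that the normalized parameters $\widehat H_q$ are single-valued along $D^\circ$. Then I use hypothesis (iv) and the same proposition to extend across $D\setminus D^\circ$.

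\textbf{Step 3 (remaining singular points).} For the points $q\in M\cap\Sing(\F)$ not in $D$, hypothesis (ii) says $q$ is non-dicritical. So Lemma~\ref{lem:nondicritical} extends $G$ to a neighborhood $U_q$.

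\textbf{Step 4 (gluing).} Let $N$ be the union of $N_{\reg}$ and all the $U_q$. Each local extension is meromorphic on $U_q$ and agrees with $G$ on $U_q\cap N_{\reg}$, which is nonempty and open. I must also check agreement on $U_q\cap U_{q'}$. I would shrink each $U_q$ so that every connected component of $U_q\cap U_{q'}$ meets $N_{\reg}$. Then the identity principle for meromorphic functions gives agreement on that component. This is possible because $M\cap\Sing(\F)$ has real codimension at least one in $M$ (indeed $\Sing(\F)$ has complex codimension $\ge2$), so $M_{\reg}$ is dense in $M$. Consequently every small neighborhood of a point of $M$ meets $N_{\reg}$ in a dense-near-$M$ way.

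I expect this gluing to be the main obstacle. To handle it, I would choose $U_q$ as small balls and note two facts. First, $U_q\cap U_{q'}$ is convex, hence connected. Second, if it meets $M$, it contains points of $M_{\reg}$, hence of $N_{\reg}$. If some $U_q\cap U_{q'}$ misses $M$, I would discard those intersections by shrinking to a neighborhood $N$ of $M$ in which every pair of charts that intersect also meet near $M$. This is a standard tubular-type shrinking argument.

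Finally, $dG\wedge\omega=0$ holds on all of $N$ by the identity principle, and $G$ is nonconstant by Step 1.
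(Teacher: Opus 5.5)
Your proposal is correct and is essentially the paper's own proof. Proposition~\ref{prop:nearMreg} gives $G$ near $M_{\reg}$, Proposition~\ref{prop:dicritical-extension} (resting on Lemma~\ref{lem:monodromy} and hypothesis (iv)) extends it across $D$, Lemma~\ref{lem:nondicritical} handles the remaining non-dicritical points of $M\cap\Sing(\F)$, and uniqueness of meromorphic continuation glues everything; your Step~4 just spells out the gluing (connected overlaps, density of $M\setminus\Sing(\F)$ in $M$, and the standard shrinking of the neighborhood) that the paper compresses into a single sentence.
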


\begin{proof}[Proof of Theorem~\ref{thm:semiglobal}]
Proposition~\ref{prop:nearMreg} gives a meromorphic first integral near
$M_{\reg}$. By hypothesis 	extup{(iv)},
Proposition~\ref{prop:dicritical-extension} extends it through the
whole dicritical component $D$, while Lemma~\ref{lem:nondicritical}
extends it through the remaining points
of $M\cap\Sing(\F)$.  Since
\[
\Sing(M)\subset\Sing(\F),
\]
these extensions cover all points of $M$.  By uniqueness of
meromorphic continuation they glue to a nonconstant meromorphic first
integral
\[
G\in\Mcal(N)
\]
on some open neighborhood $N\supset M$.
\end{proof}

\section{Local pseudoconvexity of the complementary sides}
\label{sec:pseudoconvexity}

This section studies the local geometry of the components of the
complement of a Levi-flat variety $M\subset X$, including at singular
points. We use complex hypersurface germs contained in the Levi-flat;
no description of $M$ as the inverse image of a curve under a local
first integral is needed.

\begin{proposition}
\label{prop:pseudoconvex}
Let $X$ be a connected complex manifold and let $M\subset X$ be a singular Levi-flat variety. Assume that, near every point $q\in M$, the Levi foliation is defined by a local meromorphic first integral. Then every local side of $M$ is pseudoconvex, including at singular points of $M$.
\end{proposition}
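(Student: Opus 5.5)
The plan is to reduce pseudoconvexity to the \emph{Kontinuit\"atssatz}, or equivalently the local Hartogs characterization: an open set $\Omega$ is pseudoconvex near a boundary point $q$ if $-\log d(\cdot,\partial\Omega)$ is plurisubharmonic near $q$. A convenient sufficient criterion is that every boundary point near $q$ lies on a complex hypersurface germ contained in $\partial\Omega$. Fix $q\in M$ and a small Stein neighborhood $U$ of $q$ (a ball in a chart). Let $\Omega$ be a connected component of $U\setminus M$; this is what we mean by a ``local side''. By hypothesis there is a local meromorphic first integral $H=f/g$ on $U$ whose level sets $V_\lambda=\{f-\lambda g=0\}$ contain the Levi leaves. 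Since $M$ is invariant, every point $x\in M\cap U$ off $\{f=g=0\}$ lies on a level hypersurface $V_{H(x)}\cap U$. The connected component of $V_{H(x)}\cap U$ through $x$ lies entirely in $M$: it is a leaf closure away from $\Sing(\F)$, and $M$ is closed. Points of the base locus $\{f=g=0\}\cap M$ lie on every $V_\lambda$. Thus $\partial\Omega\cap U\subset M$ is a union of complex hypersurfaces $V_\lambda\cap U$, and in particular every boundary point lies on a complex hypersurface contained in $U\setminus\Omega$.

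Next, I will use the classical fact that the complement of a complex hypersurface in a Stein domain is Stein: locally $U\setminus V_\lambda=\{z\in U : (f-\lambda g)(z)\ne0\}$, with exhaustion $-\log|f-\lambda g|$ added to an exhaustion of $U$. Then I claim
\[
\Omega=\operatorname{int}\bigcap_{\lambda\in\Lambda}(\text{component of }U\setminus V_\lambda\text{ containing }\Omega),
\]
where $\Lambda$ is the set of parameters of hypersurfaces in $\partial\Omega$. Equivalently, $-\log d(z,\partial\Omega)=\sup_\lambda\bigl(-\log d(z,V_\lambda)\bigr)$ locally. Rather than argue via distance functions, I will apply the disc (Kontinuit\"at) test directly. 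Take a continuous family of closed analytic discs $\Delta_t\subset U$, $t\in[0,1)$, with $\Delta_t\subset\Omega$ for $t<1$ and $\partial\Delta_t$ converging to a compact subset of $\Omega$. If the limit disc $\Delta_1$ touched $\partial\Omega$ at a point $x$, then $x$ lies in some $V_\lambda\subset M$. Pulling back, $h_t=(f-\lambda g)\circ\phi_t$ is nonvanishing on the closed unit disc for $t<1$ and converges uniformly to $h_1$, which vanishes at an interior point but not on the boundary circle, since $\partial\Delta_1\subset\Omega$ avoids $M$. Hurwitz's theorem then forces $h_t$ to vanish for $t$ close to $1$, which is a contradiction. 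Alternatively, $h_1\equiv0$, in which case $\partial\Delta_1\subset V_\lambda\subset M$, again a contradiction. This gives pseudoconvexity of $\Omega$ at $q$, uniformly in whether $q$ is regular or singular in $M$ or lies in the indeterminacy set.

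The main obstacle is the first step at singular points. One must check that the hypersurface through a boundary point really lies inside $M$, and not merely its intersection with $M_{\reg}$. This needs care at the base locus $\{f=g=0\}$ and at points of $\Sing(\F)$ where the level set $V_{H(x)}$ may have several branches. I would handle this by noting that $M\cap U$ is closed and contains $V_\lambda\setminus\Sing(\F)$ for the relevant $\lambda$. That set is dense in $V_\lambda\cap U$ because $\Sing(\F)$ has codimension at least two, so $V_\lambda\cap U\subset M$. For the disc argument only one irreducible hypersurface germ through $x$ contained in $M$ is needed. That germ is supplied by a branch of $V_\lambda$ for any $\lambda$ such that $x$ is in the closure of the corresponding leaf. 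Such a $\lambda$ exists because $x\in\overline{M_{\reg}}$ and the parameters form a closed set.
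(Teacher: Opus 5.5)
\paragraph{Verdict.} There is a genuine gap, in your first step: you do not actually prove that a complex hypersurface germ inside $M$ passes through every point of $M$.

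\paragraph{Where the argument fails.} Your second step is a legitimate, more explicit substitute for the paper's appeal to $1/h_q$ and the domain-of-holomorphy characterization: given a complex hypersurface germ disjoint from $\Omega$ through every boundary point, you conclude via the Kontinuit\"atssatz and Hurwitz. The paper does not derive these germs from the first integral at all; it quotes Lebl's complex-leaf lemma \cite[Lemma~5.4]{LeblSingularSet}.

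Your inclusion $V_\lambda\cap U\subset M$ is not justified. Invariance only puts into $M$ the leaf through a point of $M\setminus\Sing(\F)$, that is, one connected component of $V_\lambda\setminus\Sing(\F)$. That set can be disconnected: $\Sing(\F)$ has codimension two in $X$, but it can have codimension one inside $V_\lambda$. Examples are $\{xy=0\}\setminus\{0\}$ for $d(xy)=0$, or a fibre of a dicritical first integral with two branches through the base point, each branch being a separate leaf. So density of $V_\lambda\setminus\Sing(\F)$ in $V_\lambda\cap U$ proves nothing. Moreover, at points of $M\cap\Sing(\F)$, in particular on $\{f=g=0\}$, there is no leaf through $x$ to start from.

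Your fallback is that some branch of $V_\lambda$ through $x$ lies in $M$ because $x\in\overline{M_{\reg}}$ and "the parameters form a closed set". This restates what has to be proved. Leaves $L_n\subset M$ through points $x_n\to x$ do have a limit set in $M\cap V_\lambda$ containing $x$. Nothing you say shows that this limit set contains a hypersurface germ at $x$.

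\paragraph{How to repair it.} You can cite Lebl's lemma, or prove the claim from the first integral as follows.
\begin{enumerate}
\item Put $\lambda_n=H(x_n)\to\lambda$. Choose coordinates at $x$ in which $f-\lambda g$ is $z_n$-regular. Then on a fixed polydisc $\Delta=\Delta'\times\{|z_n|<r\}$, every $V_\mu\cap\Delta$ with $\mu$ near $\lambda$ is a finite branched cover of $\Delta'$.
\item The irreducible component of $V_{\lambda_n}\cap\Delta$ carrying the Levi leaf at $x_n$ lies in $M$. This is because the real-analytic equations of $M$ vanish on an open piece of its connected regular locus. This component also surjects onto $\Delta'$.
\item Hence the limit set of these components lies in $M\cap V_\lambda$ and projects onto $\Delta'$. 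By a dimension count, some irreducible component of $V_\lambda\cap\Delta$ is contained in $M$. It passes through $x$, because $x$ is the only point of $V_\lambda\cap\Delta$ over $0\in\Delta'$.
\end{enumerate}

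Since you now have only a germ $W\ni x$ with $W\subset M$ and a local equation $h$, the disc argument must also be localized.
\begin{itemize}
\item Hurwitz applied on a small circle around $s_0\in\phi_1^{-1}(x)$ rules out an isolated zero of $h\circ\phi_1$ at $s_0$.
\item If $h\circ\phi_1\equiv0$ near $s_0$, you can no longer conclude $\partial\Delta_1\subset V_\lambda$.
\item Instead, this dichotomy shows that $\phi_1^{-1}(\partial\Omega)$ is open in the unit disc. It is also closed in the closed disc and misses the unit circle, so it is empty.
\end{itemize}
With these two repairs your argument is complete. It then also makes explicit the local-to-global step that the paper's use of a single function $1/h_q$ leaves implicit.
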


\begin{proof}[Proof of Proposition~\ref{prop:pseudoconvex}]
Since $M$ has pure real codimension one, its smooth hypersurface locus
is dense in $M$. By Lebl's complex-leaf lemma
\cite[Lemma~5.4]{LeblSingularSet}, through every $q\in M$ there is a
germ of a complex hypersurface contained in $M$. Let $h_q$ be a
holomorphic equation for this hypersurface on a small coordinate
neighborhood. The germ $1/h_q$ is holomorphic on each local component
of the complement of $M$ and cannot extend holomorphically across
$q$. The local characterization of pseudoconvexity by domains of
holomorphy gives the conclusion, including at singular points.
\end{proof}

\begin{lemma}[Hartogs extension from a boundary neighborhood]
\label{lem:boundary-hartogs}
Let $Y$ be a compact complex manifold, let $M\subset Y$ be compact, and
let $\Omega$ be a connected component of $Y\setminus M$. Assume that
$\Omega$ is Stein and $\dim_\CC\Omega\ge2$. Let $N$ be an open
neighborhood of $M$ in $Y$ and let
\[
g\in\Mcal(N\cap\Omega).
\]
Then there are a smaller neighborhood $N'\subset N$ of $M$ and a
meromorphic function
\[
\widetilde g\in\Mcal(\Omega)
\]
such that
\[
\widetilde g=g\qquad\text{on }N'\cap\Omega.
\]
\end{lemma}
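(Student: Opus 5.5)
The plan is a Hartogs-type extension on the Stein domain $\Omega$. We write $g=a/b$ and use the Hartogs phenomenon for holomorphic functions on a Stein manifold of dimension $\ge2$ with a compact set removed. Since $Y$ is compact and $N$ is an open neighborhood of $M$, the set $K:=\Omega\setminus N$ is a closed subset of the compact space $Y$. It is contained in $\Omega$, so $K$ is a compact subset of $\Omega$. Moreover $\Omega\setminus K=N\cap\Omega$. Replacing $K$ by its $\mathcal O(\Omega)$-convex hull $\widehat K$, which is still compact because $\Omega$ is Stein, we may also shrink $N$ to a neighborhood $N'$ with $N'\cap\Omega\subset\Omega\setminus\widehat K$. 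Here $N'=Y\setminus \overline{W}$, where $W$ is a relatively compact neighborhood of $\widehat K$ in $\Omega$; it contains $M$ because $\overline W\subset\Omega$ is disjoint from $M$. Some care is needed with connectivity: Hartogs extension is usually stated for a connected complement, so we either choose $W$ so that $\Omega\setminus\overline W$ is connected near the "end" of $\Omega$, or argue componentwise.

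\emph{Step 1 (holomorphic case).} For $h\in\mathcal O(\Omega\setminus K)$, the Ehrenpreis/Hörmander Hartogs theorem gives extension to $\mathcal O(\Omega)$. This theorem applies to a Stein manifold of dimension $\ge2$ and a compact $K$ with $\Omega\setminus K$ connected. The proof: choose a cutoff $\chi$ equal to $1$ near $K$ and set $v=\bar\partial((1-\chi)h)$. Then $v$ is $\bar\partial$-closed with compact support, so $\bar\partial u=v$ has a solution $u$ with compact support, since $H^1_c(\Omega,\mathcal O)=0$ for Stein $\Omega$ with $\dim\ge2$ by Serre duality. Then $(1-\chi)h-u$ is the extension.

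\emph{Step 2 (meromorphic case).} This is the main obstacle: meromorphic functions are not globally quotients on $N\cap\Omega$ a priori. I would first use that on the Stein domain $\Omega$, the polar divisor of $g$ on $\Omega\setminus K$ is a hypersurface. By Levi-type extension of hypersurfaces across compact sets (Remmert–Stein/Thullen in dimension $\ge2$ with compact obstacle), its closure extends to an analytic hypersurface $P$ in $\Omega$. Since $H^2(\Omega,\mathbb Z)$ may be nonzero, $P$ need not be principal. Instead, we choose a nonzero holomorphic section description: by Cartan's Theorem A and B on $\Omega$, the line bundle $\mathcal O(P)$ has a section $s$ vanishing exactly on $P$, with the appropriate multiplicities. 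Then $sg$ is a holomorphic section of $\mathcal O(P)$ over $\Omega\setminus K$. The $\bar\partial$ argument of Step 1, applied to sections of a holomorphic line bundle (again $H^1_c(\Omega,\mathcal O(P))=0$ by Serre duality and Theorem B), extends $sg$ to a section $\sigma$ over $\Omega$. We put $\widetilde g=\sigma/s$.

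Alternatively, and more simply, one can invoke directly the known Levi-type theorem that meromorphic functions extend across compact holes in Stein manifolds of dimension $\ge2$ (Siu / Ivashkovich), which is what I would cite if allowed.

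Finally, $\widetilde g=g$ on $N'\cap\Omega$ by construction, and by the identity theorem on each component.
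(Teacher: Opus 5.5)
\textbf{The gap is in Step 2.} You claim that the polar hypersurface of $g$ on $\Omega\setminus K$ extends to a hypersurface $P$ of $\Omega$ ``by Levi-type extension across compact sets (Remmert--Stein/Thullen)''. The Remmert--Stein and Thullen theorems concern extension across thin obstacles, not across compact holes. The general statement you need is false in complex dimension two, which is exactly the case in which the paper applies the lemma (the plane section $P\simeq\PP^2$). Take $\Omega=\CC^2$, $K=\{|z_1|\le1,\ |z_2|\le1\}$, $0<\delta<\log2$, and
\[
A=\bigl\{(z_1,z_2):\ |z_2|>1,\ z_1=\tfrac12e^{\delta/z_2}\bigr\}.
\]
Since $|\tfrac12e^{\delta/z_2}|\le\tfrac12e^{\delta}<1$ for $|z_2|\ge1$, $A$ is a smooth curve that is closed in the connected set $\CC^2\setminus K$; its limit points on $|z_2|=1$ lie in $K$. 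Now let $\widetilde A\supset A$ be any analytic hypersurface of $\CC^2$. By the identity theorem, $\widetilde A$ contains the whole graph of $\tfrac12e^{\delta/z_2}$ over $\CC^*$. This graph meets $\CC^2\setminus K$ outside $A$, for instance at $z_2=t>0$ small, where $|z_1|>1$. So hypersurfaces do not extend across compact holes in general.

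For the polar set of a meromorphic function the extension is true, but only as a consequence of the meromorphic Hartogs theorem you are trying to prove. As written, Step 2 is therefore circular. Once $P$ is available, the rest of Step 2 is fine: you extend $s_Pg$ using $H^1_c(\Omega,\mathcal O(P))=0$, and the canonical section $s_P$ needs no appeal to Theorems A or B.

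Your fallback, quoting the meromorphic Hartogs theorem on a Stein manifold directly, is exactly the paper's route; it cites Merker--Porten. What you leave open is connectivity. The paper settles it by noting that a connected Stein manifold of dimension $\ge2$ has only one end, so $\Omega\setminus N$ can be enlarged to a compact $K$ with $\Omega\setminus K$ connected. With your hull $\widehat K$ this is automatic:
\begin{itemize}
\item by the maximum principle, $\Omega\setminus\widehat K$ has no relatively compact components;
\item the one-end property follows from your own Step 1: if there were two unbounded components, the function equal to $1$ on one and $0$ on the others would extend holomorphically to $\Omega$ and agree with these values outside a compact set, contradicting the identity theorem.
\end{itemize}
``Arguing componentwise'' is not a valid alternative. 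A minor slip: take $N'=N\setminus\overline W$ rather than $Y\setminus\overline W$, since the latter contains the other components of $Y\setminus M$, which need not lie in $N$.
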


\begin{proof}
Since $Y$ is compact and $N$ contains $\partial\Omega\subset M$, the set
$\Omega\setminus N$ is relatively compact in $\Omega$. A connected
Stein manifold of complex dimension at least two has only one end; see,
for instance, Serre \cite{SerreDuality}. Hence there exists a compact set
$K\Subset\Omega$ containing $\Omega\setminus N$ such that
\[
\Omega\setminus K
\]
is connected. In particular,
\[
\Omega\setminus K\subset N\cap\Omega.
\]
The meromorphic Hartogs theorem
\cite[Theorem~2.2]{MerkerPorten} extends
$g|_{\Omega\setminus K}$ meromorphically to all of $\Omega$; denote the
extension by $\widetilde g$.

Since $K$ is compact and disjoint from $M$, we may shrink $N$ to a
neighborhood $N'\subset N$ of $M$ such that $N'\cap K=\varnothing$.
Then
\[
N'\cap\Omega\subset\Omega\setminus K,
\]
so $\widetilde g=g$ on $N'\cap\Omega$, as claimed.
\end{proof}

\section{Projective globalization}
\label{sec:globalization}

We now prove the higher-dimensional projective criterion by reducing
to a generic linear two-plane. The plane can be chosen to avoid the
analytic exceptional locus $E_D=D\setminus D^\circ$, while meeting
$D^\circ$ transversely. On that section the normalized transverse
parameters are automatically compatible. Local pseudoconvexity,
Takeuchi's theorem \cite[Theorem~IV, p.~181]{Takeuchi}, and
Lemma~\ref{lem:boundary-hartogs} then globalize the resulting first
integral on the plane.

\begin{proof}[Proof of Theorem~\ref{thm:projective-main}]
Let $p\in D^\circ$ be the pencil singularity in hypothesis
\textup{(iii)}. By Proposition~\ref{prop:nearMreg}, the corresponding
minimal local parameter extends by holonomy to a single-valued
meromorphic first integral
\[
G\in\Mcal(N_{\reg})
\]
on an open neighborhood $N_{\reg}$ of $M_{\reg}$. By
Lemma~\ref{lem:monodromy}, the projective transverse monodromy along
$D^\circ$ is trivial; hence the local minimal first integrals along
$D^\circ$ may be normalized as $\widehat H_q$, $q\in D^\circ$, so as
to agree with $G$ wherever both are defined.

By the analytic exceptional-locus condition in Definition~\ref{def:generic-ltp},
we have
\[
E_D=D\setminus D^\circ,
\qquad \dim_\CC E_D\le n-3.
\]
Choose a generic linear plane
\[
P\simeq\PP^2\subset\PP^n
\]
such that
\begin{enumerate}[label=\textup{(\alph*)}]
\item $P\cap E_D=\varnothing$ and $P$ is transverse to $D^\circ$;
\item $P$ is transverse to $M_{\reg}$ at every point of
$P\cap M_{\reg}$;
\item $P$ is in general position with $\F$: it is not invariant and
its tangency locus with $\F$ has codimension at least two in $P$;
\item the restriction $\F_P:=\F|_P$ is a codimension-one foliation on
$P$, and at every point of $P\cap(M\cap\Sing(\F))$ outside $D$ the
restriction to $P$ of a local holomorphic first integral is nonconstant.
\end{enumerate}
Here is a direct justification of condition \textup{(b)}. Let
$\mathcal G=\operatorname{Gr}(3,n+1)$ be the Grassmannian of projective
two-planes, of real dimension $6n-12$. At a fixed point
$x\in M_{\reg}$, nontransversality is equivalent to
\[
T_xP\subset T_x^{\CC}M,
\]
since $T_xP$ is complex and $T_x^{\CC}M$ is the maximal complex
subspace of the real hyperplane $T_xM$. The tangent directions of
such planes through $x$ form $\operatorname{Gr}(2,n-1)$, of real
dimension $4(n-3)$. The incidence space of nontransverse pairs
$(x,P)$ therefore has real dimension
\[
(2n-1)+4(n-3)=6n-13<\dim_{\RR}\mathcal G.
\]
By Sard's theorem its image has measure zero. Thus a plane can be
chosen transverse to $M_{\reg}$ at every point of intersection.
Condition \textup{(c)} holds on a Zariski open dense set of planes
\cite[Theorem~3.25]{CerveauLinsNetoStructural}; the remaining
conditions are obtained by the usual generic linear-section choices.
In particular, these requirements can be imposed simultaneously. Then
\[
P\cap D=\{q_1,\ldots,q_s\}\subset D^\circ.
\]
Put
\[
M_P=M\cap P,
\qquad G_P=G|_{P\cap N_{\reg}}.
\]
The function $G_P$ is a meromorphic first integral for $\F_P$ near
$P\cap M_{\reg}$.

At each $q_j$, transversality of $P$ to $D$ identifies the germ of
$\F_P$ with the fixed two-dimensional transverse germ of the
local-product structure. Consequently
\[
H_j:=\widehat H_{q_j}|_P
\]
is a minimal local meromorphic first integral of $\F_P$ at $q_j$.
By the normalization along $D^\circ$, $H_j$ agrees with $G_P$ on a
nonempty open subset of their common domain. Hence $H_j$ extends
$G_P$ meromorphically through $q_j$. In particular, the several
dicritical points of the plane section are automatically compatible;
with the inherited normalization all transition maps are the identity.

Let now $a\in M_P\cap\Sing(\F_P)$ with $a\notin P\cap D$.
Condition \textup{(b)} excludes tangencies of $P$ with the regular ambient
foliation at points of $M_{\reg}$, because
$T_a\F=T_a^{\CC}M$ there. Hence such a point $a$ lies in
$M\cap\Sing(\F)$ outside $D$, and is nondicritical by hypothesis
\textup{(ii)} of Theorem~\ref{thm:projective-main}. By the genericity
condition \textup{(d)}, a local holomorphic first integral restricts
nontrivially to $P$. The two-dimensional form of
Lemma~\ref{lem:nondicritical} therefore extends $G_P$ through $a$.
Thus $G_P$ extends to a nonconstant meromorphic first integral on a
neighborhood $N_P$ of $M_P$ in $P$.

Let $\Omega$ be a connected component of $P\setminus M_P$.
Proposition~\ref{prop:pseudoconvex} shows that $\Omega$ is locally
pseudoconvex in $P$, and Takeuchi's theorem
\cite[Theorem~IV, p.~181]{Takeuchi} implies that $\Omega$ is Stein.
Lemma~\ref{lem:boundary-hartogs}, applied to
$G_P|_{N_P\cap\Omega}$, extends $G_P$ meromorphically through $\Omega$
after shrinking $N_P$ near $M_P$. Repeating this for the finitely many
components of $P\setminus M_P$ and shrinking once more if necessary,
the extensions glue with $G_P$ on a neighborhood of $M_P$. We obtain a
global meromorphic function on $P\simeq\PP^2$, hence a nonconstant
rational first integral of $\F_P$.

Finally, the extension statement in
\cite[Theorem~3.25]{CerveauLinsNetoStructural} applies to the plane
$P$, which is in general position with $\F$ by condition
\textup{(c)}: the rational first integral of $\F_P$ extends to a
rational first integral of $\F$ on $\PP^n$. This completes the proof.
\end{proof}

\begin{proof}[Proof of Corollary~\ref{cor:local-separation}]
Let $\gamma_p\subset\PP^1$ be the set of parameters of the local Levi
leaves represented in $M_{\reg}\cap U_p$. Since $h$ is constant on local Levi leaves
and the generic fibers of the minimal parameter $F_p$ are connected,
it induces a map $\eta:\gamma_p\longrightarrow S$ with
\[
h=\eta\circ F_p
\]
on generic local leaves. The assumption that $h$ separates the nearby
Levi leaves means that $\eta$ is injective on the entire relevant
transverse parameter set, including when it is a circle.

Let a generic global Levi leaf $L$ have two sufficiently near-$p$
components contained respectively in $V_{\lambda_1}$ and
$V_{\lambda_2}$. By the global-leaf compatibility assumed for $h$,
\[
\eta(\lambda_1)=h|_{L\cap U_p}=\eta(\lambda_2).
\]
Hence $\lambda_1=\lambda_2$. By
Lemma~\ref{lem:product-propagation}, the assumption that each Levi leaf
accumulates at some point of the local-product locus gives
\[
p\in\overline L
\]
for every Levi leaf $L\subset M_{\reg}$. Together with the uniqueness of
the local parameter proved above, this shows that $p$ is a pencil
singularity for $M$. Theorem~\ref{thm:projective-main} now gives the
conclusion.
\end{proof}

\section{Real-analytic transverse foliations}
\label{sec:real-transverse}

We consider the interesting case where  the
holomorphic foliation is contained in a real-analytic foliation of real
codimension one. We discover that the induced real foliation on a complex transversal
restricts the possible projective monodromy.

\subsection{Singular real Levi foliations}
\label{subsec:singular-real-levi}

Let $X$ be a connected complex manifold, let $\F$ be a codimension-one
holomorphic foliation on $X$, and let $\Hcal$ be a possibly singular
real-analytic foliation of real codimension one. We write
\[
\F\subset\Hcal
\]
if, wherever both foliations are regular, every leaf of $\F$ is
contained in a leaf of $\Hcal$. On the common regular locus this is
equivalent to
\[
T_{\RR}\F\subset T\Hcal.
\]

\begin{lemma}
\label{lem:real-transverse}
Assume that $\F\subset\Hcal$. Then  the holonomy pseudogroup of $\mathcal F$ acting on the transverse parameter defined by $\F$
 preserve the real one-dimensional foliation induced by
$\Hcal$ on a complex transversal. In particular, whenever minimal
local meromorphic first integrals identify the transverse leaf space
with $\PP^1$, the corresponding projective monodromy is contained in
the subgroup of $\operatorname{PGL}(2,\CC)$ preserving that real
foliation.
\end{lemma}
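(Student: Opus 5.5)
The plan is to reduce everything to the common regular locus, where both foliations are given by flow boxes, and to read off the induced real foliation on a complex transversal. Fix a point $x$ where $\F$ and $\Hcal$ are both regular. Let $\Sigma_x$ be a small complex disc transverse to $\F$ at $x$. Since $T_{\RR}\F$ has real codimension two and $T\Hcal$ has real codimension one, the inclusion $T_{\RR}\F\subset T\Hcal$ shows that $\Hcal$ is transverse to $\Sigma_x$. Hence $\Hcal$ cuts on $\Sigma_x$ a real-analytic foliation $\Hcal_{\Sigma_x}$ of real dimension one. Because every plaque of $\F$ near $x$ lies in a single plaque of $\Hcal$, the projection of a holomorphic flow box of $\F$ onto $\Sigma_x$ sends $\Hcal$-plaques to leaves of $\Hcal_{\Sigma_x}$. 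Thus $\Hcal_{\Sigma_x}$ is exactly the image of $\Hcal$ in the local leaf space of $\F$, and it does not depend on which transversal is used: two transversals $\Sigma,\Sigma'$ through the same plaques are identified by the $\F$-projection, and this identification carries $\Hcal_\Sigma$ to $\Hcal_{\Sigma'}$.

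Next I would treat holonomy. Let $\gamma$ be a path in a leaf $L$ of $\F$ contained in the common regular locus, and let $h_\gamma:\Sigma_x\to\Sigma_y$ be its holonomy. Cover $\gamma$ by finitely many flow boxes that are simultaneously flow boxes for $\F$ and, after refining, small enough that each $\F$-plaque lies in one $\Hcal$-plaque. Then $h_\gamma$ is a composition of the plaque-to-plaque identifications from the first paragraph, so $h_\gamma$ maps leaves of $\Hcal_{\Sigma_x}$ into leaves of $\Hcal_{\Sigma_y}$. More directly, points $z,z'\in\Sigma_x$ on the same $\Hcal$-leaf germ are joined by a path inside that $\Hcal$-leaf, and the saturation of this path by $\F$-plaques along $\gamma$ stays in the same $\Hcal$-leaf because $\F\subset\Hcal$. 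Consequently the holonomy pseudogroup preserves the real one-dimensional foliation on transversals. I expect the main obstacle here to be the singular set of $\Hcal$. I would handle it by arguing on the open dense common regular locus, using that the complement is a proper real-analytic set, so that generic transversals and generic paths avoid it. The real-analytic foliations $h_\gamma^*\Hcal_{\Sigma_y}$ and $\Hcal_{\Sigma_x}$ then agree on an open dense set, hence everywhere by analyticity.

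Finally, suppose minimal local meromorphic first integrals $H$ identify the local leaf space with $\PP^1$. Then $H|_{\Sigma}$ is a local coordinate (minimality gives degree one on a generic transversal), and it pushes $\Hcal_\Sigma$ forward to a real-analytic one-dimensional foliation $\mathcal R$ on an open set of $\PP^1$. By the previous paragraph, continuation of $H$ along a loop changes it to $\rho(\gamma)\circ H$, where $\rho(\gamma)\in\operatorname{PGL}(2,\CC)$ is the transition used in Lemma~\ref{lem:monodromy}. The two parameters describe the same $\Hcal$-leaves, so $\rho(\gamma)$ maps $\mathcal R$ to $\mathcal R$ on an open set. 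Since a Möbius map is real-analytic, the identity principle extends this to the full analytic continuation of $\mathcal R$. Hence $\rho(\gamma)$ lies in the subgroup of $\operatorname{PGL}(2,\CC)$ preserving that real foliation, which is the statement.
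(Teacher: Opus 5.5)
Your proposal is correct and follows essentially the same route as the paper: on $\F$-flow boxes $\Hcal$ descends to a real one-dimensional foliation on the transverse parameter, and because $\Hcal$ is globally defined the changes of transverse coordinate $f_j=\varphi_{ij}(f_i)$ (in your formulation, the holonomy maps $h_\gamma$ built from plaque projections) carry these induced foliations to one another, which places the M\"obius transitions in the stabilizer of the induced real foliation. Your codimension count showing that $\Hcal$ is transverse to $\Sigma$, and your density and identity-principle treatment of $\Sing(\Hcal)$, are extra care the paper omits; strictly, paths need not avoid a real-codimension-one part of $\Sing(\Hcal)$, so there it is the analyticity step, not genericity of paths, that does the work.
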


\begin{proof}
At a regular point of $\F$, choose a local holomorphic first integral
$f_i$ on a flow box. Since $\F\subset\Hcal$, the foliation $\Hcal$
descends to a real one-dimensional foliation on the complex
transversal parametrized by $f_i$. On an overlap,
\[
f_j=\varphi_{ij}(f_i)
\]
for a holomorphic change of transverse coordinate. Since $\Hcal$ is
globally defined, $\varphi_{ij}$ sends leaves of the induced real
foliation in the $f_i$-plane to leaves of the induced real foliation
in the $f_j$-plane. This proves the first assertion.

When minimal meromorphic first integrals identify the local transverse
leaf spaces with $\PP^1$, the corresponding projective transition
maps therefore belong to the subgroup of
$\operatorname{PGL}(2,\CC)$ preserving the induced real foliation.
\end{proof}

\section{Infinitely many invariant Levi-flats and holonomy rigidity}
\label{sec:many-leviflats-holonomy}

We point-out a local consequence of the simultaneous presence of many
invariant real-analytic Levi-flat varieties.  The point is purely
transverse: infinitely many distinct real-analytic curves in a complex
transversal force the common holonomy group to be abelian.  No
compactness assumption on the ambient manifold and no nonsingularity
assumption on the foliation are involved.

We begin with the one-dimensional statement.

\begin{lemma}[Transverse rigidity]
\label{lem:transverse-rigidity}
Let
\[
G\subset\operatorname{Diff}(\CC,0)
\]
be a group of germs of holomorphic diffeomorphisms. Assume that there
exists an infinite family
\[
\{\gamma_\alpha\}_{\alpha\in A}
\]
of pairwise distinct smooth real-analytic curve germs through the
origin such that
\[
g(\gamma_\alpha)=\gamma_\alpha
\]
for every $g\in G$ and every $\alpha\in A$. Then $G$ is abelian.
\end{lemma}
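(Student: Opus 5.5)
The plan is to convert invariance of each curve $\gamma_\alpha$ into commutation with an antiholomorphic reflection. Then $G$ lies in the centralizer of a group of holomorphic germs generated by products of two reflections, and we show this centralizer is abelian.

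\emph{Step 1 (Schwarz reflections).} A smooth real-analytic curve germ $\gamma_\alpha$ through $0$ is the image of $(\RR,0)$ under a germ of biholomorphism $\phi_\alpha$. Define the Schwarz reflection $\sigma_\alpha=\phi_\alpha\circ c\circ\phi_\alpha^{-1}$, where $c(z)=\bar z$. This is an antiholomorphic involution with fixed set exactly $\gamma_\alpha$, so distinct curves give distinct reflections. If $g\in G$ preserves $\gamma_\alpha$, then $\phi_\alpha^{-1}g\phi_\alpha$ preserves $\RR$, hence has real Taylor coefficients and commutes with $c$. Thus $g\sigma_\alpha=\sigma_\alpha g$ for every $\alpha$. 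Consequently every $g\in G$ commutes with all holomorphic germs
\[
\tau_{\alpha\beta}=\sigma_\alpha\circ\sigma_\beta\in\operatorname{Diff}(\CC,0),
\]
and $\tau_{\alpha\beta}\ne\operatorname{id}$ for $\alpha\neq\beta$. Let $T$ be the group generated by the $\tau_{\alpha\beta}$. Then $G\subset Z(T)$, the centralizer of $T$.

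\emph{Step 2 (case split on $T$).} First suppose $T$ contains a nonidentity germ $\tau$ tangent to the identity. I would then invoke the classical fact that the centralizer of such a germ in $\operatorname{Diff}(\CC,0)$ is abelian: it embeds in the one-parameter flow of the formal infinitesimal generator, up to a finite cyclic factor compatible with the flow. In that case $G$ is abelian. One source of such a $\tau$ is two curves with the same tangent line: then $\tau_{\alpha\beta}'(0)=1$ and $\tau_{\alpha\beta}\ne\operatorname{id}$. More generally, any element of $T$ whose linear part is not a root of unity, or any nonperiodic element with root-of-unity linear part $\lambda$, has a suitable power of this kind. (For a nonperiodic element whose linear part is not a root of unity, I would instead use that the centralizer of a germ with infinite-order linear part is abelian, being contained in the linearizing-coordinate centralizer or in a Cremer-type abelian group.)

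\emph{Step 3 (the torsion case, the expected obstacle).} Otherwise the derivative map $T\to\CC^*$ is injective, so $T$ is abelian. By pigeonhole, infinitely many curves with only finitely many tangent lines would give two curves sharing a tangent, which Step 2 already handles; so we may assume there are infinitely many distinct tangent lines. The linear part of $\tau_{\alpha\beta}$ is the rotation by twice the angle between the tangent lines of $\gamma_\alpha$ and $\gamma_\beta$. Hence $T'(0)$ is an infinite subgroup of $S^1$, and by the reduction just made it consists of roots of unity of unbounded order. The delicate point is to linearize $T$ simultaneously. I would linearize one element $\tau_0$ of order $k\ge2$ and observe that every element of $T$ commutes with it, so each element has the form $z\,h(z^k)$ in those coordinates. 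Then I would use injectivity of the derivative map and Cartan-type averaging over each finite cyclic subgroup, together with a direct-limit argument over the increasing finite subgroups, to conjugate $T$ into the rotations. Finally, a germ $g=\sum a_nz^n$ commuting with $z\mapsto\zeta z$ for $\zeta$ of unbounded order satisfies $a_n(\zeta^{n}-\zeta)=0$ for all such $\zeta$, which forces $a_n=0$ for $n\ge2$. So $G$ is linear in these coordinates, hence abelian. I expect the simultaneous linearization in this step to be the main technical hurdle.
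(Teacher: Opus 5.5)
Your Steps 1 and 2 are sound. One wording fix in Step 2: an element whose linear part is not a root of unity has no power tangent to the identity, so that sentence is wrong as written. The clean reason that its centralizer is abelian is that such a germ is formally linearizable, and the formal centralizer of $z\mapsto\lambda z$, with $\lambda$ not a root of unity, consists of linear maps. Hence $Z(\tau)$ injects into $\CC^*$ via the derivative, and no linearizable/Cremer dichotomy is needed.

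The genuine gap is in Step 3, the step you flagged. You propose to conjugate the infinite torsion group $T$ analytically into rotations by a ``direct limit'' of Cartan averages over increasing finite cyclic subgroups $T_n$, and this is not justified. The average $\frac1m\sum_j\zeta^{-j}\tau^j$ is defined only where all the iterates $\tau^j$ are defined. Nothing prevents these domains, and hence the radii of convergence of the successive linearizing coordinates, from shrinking to zero as the order grows. So you do not obtain an analytic coordinate in which all of $T$ is linear.

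You do not need one; either of the following repairs works.
\begin{enumerate}
\item Run the limit formally. Normalized linearizations of $T_n\subset T_{n+1}$ differ by a germ $z+O(z^{m_n+1})$, so they converge in $\CC[[z]]$. Your coefficient argument $a_n(\zeta^n-\zeta)=0$ is a formal identity, so it still applies.
\item More simply, argue one element at a time. Take $g_1,g_2\in G$ and $\tau\in T$ of order $m$. Since the derivative is injective on $T$, the multiplier of $\tau$ is a primitive $m$-th root of unity. Linearize $\tau$ alone. Because $g_1,g_2$ commute with $\tau$, in that coordinate both have the form $z\,h(z^m)$, so the commutator $[g_1,g_2]$ is $z+O(z^{m+1})$. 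Order of contact with the identity is a conjugacy invariant, and the orders $m$ are unbounded in $T$. Hence $[g_1,g_2]=\operatorname{id}$.
\end{enumerate}
With either repair your argument is complete.

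For comparison, the paper never meets the torsion case. When there are infinitely many tangent lines, it works directly on $G$: every $g\in G$ preserves a tangent line, so $g'(0)\in\RR^*$. If some $|f'(0)|\neq1$, Koenigs linearization makes every $\gamma_\alpha$ a straight line, and then $G$ consists of real homotheties. Otherwise all multipliers are $\pm1$. A nontrivial parabolic germ has invariant curves in only finitely many tangent directions, so parabolic elements of $G$ are trivial and $|G|\le2$. The shared-tangent case is treated as in your Step 2, via the centralizer of $\sigma_2\circ\sigma_1$.
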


\begin{proof}
For a smooth real-analytic curve germ $\gamma\subset(\CC,0)$, denote by
$\sigma_\gamma$ the Schwarz reflection across $\gamma$.  Thus
$\sigma_\gamma$ is the antiholomorphic involution fixing $\gamma$
pointwise.  If $\phi$ is a local holomorphic coordinate straightening
$\gamma$ to $\RR$, then
\[
\sigma_\gamma=\phi^{-1}\circ c\circ\phi,
\qquad c(z)=\overline z.
\]
If $g\in\operatorname{Diff}(\CC,0)$ preserves $\gamma$, then
$g\sigma_\gamma g^{-1}$ is again the antiholomorphic involution fixing
$\gamma$ pointwise.  By uniqueness of Schwarz reflection,
\[
g\circ\sigma_\gamma=\sigma_\gamma\circ g.
\tag{\ref{lem:transverse-rigidity}.1}
\]
Hence every element of $G$ commutes with every reflection associated
with the family.

We distinguish two cases.

Suppose first that the set of tangent lines
\[
\{T_0\gamma_\alpha:\alpha\in A\}
\]
is infinite. Since any $g\in G$ preserves one such real tangent line,
its multiplier is real:
\[
g'(0)\in\RR^*.
\]
If some $f\in G$ has $|f'(0)|\ne1$, then $f$ is analytically
linearizable. In a linearizing coordinate we may write
\[
f(z)=\lambda z,
\qquad \lambda\in\RR^*,\quad |\lambda|\ne1.
\]
Every smooth real-analytic $f$-invariant curve through the origin is a
straight real line. Indeed, after a rotation, and after replacing $f$
by a suitable iterate or inverse, write the curve as
\[
y=\varphi(x),\qquad 0<\lambda<1,
\]
with $\varphi(0)=\varphi'(0)=0$. Invariance gives
\[
\varphi(\lambda x)=\lambda\varphi(x).
\]
Writing $\varphi(x)=\sum_{j\ge2}a_jx^j$ yields
$a_j(\lambda^j-\lambda)=0$, and therefore $\varphi\equiv0$.
Consequently our family contains infinitely many distinct real lines
\[
L_\theta=e^{i\theta}\RR.
\]
If
\[
g(z)=\sum_{j\ge1}b_jz^j\in G,
\]
then $g(L_\theta)=L_\theta$ implies
\[
b_je^{i(j-1)\theta}\in\RR
\]
for every $j$ and infinitely many values of $\theta$ modulo $\pi$.
For $j\ge2$ this forces $b_j=0$. Hence every element of $G$ is a real
homothety, and $G$ is abelian.

It remains, in the present case, to assume that $|g'(0)|=1$ for every
$g\in G$. Since all multipliers are real,
\[
g'(0)\in\{1,-1\}.
\]
If $g'(0)=1$ and $g\ne\operatorname{id}$, write
\[
g(z)=z+az^{k+1}+O(z^{k+2}),
\qquad a\ne0.
\]
Let $\gamma$ be a smooth real-analytic invariant curve with tangent
vector $v\ne0$, and write
\[
\gamma(t)=vt+O(t^2),
\qquad
g(\gamma(t))=\gamma(\rho(t)).
\]
Comparison at the first nonlinear order gives
\[
av^k\in\RR.
\]
Thus a nontrivial germ tangent to the identity admits invariant smooth
real-analytic curves in only finitely many tangent directions, contrary
to the present hypothesis. Hence $g'(0)=1$ implies
$g=\operatorname{id}$. If $g'(0)=-1$, then $(g^2)'(0)=1$, so
$g^2=\operatorname{id}$. Moreover, if $g$ and $h$ both have multiplier
$-1$, then $(gh)'(0)=1$, whence $gh=\operatorname{id}$ and $g=h$.
Thus $G$ has at most two elements and is again abelian.

Suppose now that only finitely many tangent directions occur. Since the
family is infinite, there are two distinct curves $\gamma_1$ and
$\gamma_2$ with
\[
T_0\gamma_1=T_0\gamma_2.
\]
Let $\sigma_1$ and $\sigma_2$ be their Schwarz reflections and set
\[
F=\sigma_2\circ\sigma_1.
\]
Then $F$ is holomorphic and $F'(0)=1$. Moreover $F$ is not the
identity, since otherwise $\sigma_1=\sigma_2$ and therefore
$\gamma_1=\gamma_2$. Hence
\[
F(z)=z+az^{k+1}+\cdots,
\qquad a\ne0.
\]
By \textup{(\ref{lem:transverse-rigidity}.1)}, every element of $G$
commutes with both $\sigma_1$ and $\sigma_2$, and therefore
\[
G\subset Z(F),
\]
where $Z(F)$ denotes the centralizer of $F$ in
$\operatorname{Diff}(\CC,0)$.

We recall the standard formal argument showing that $Z(F)$ is abelian.
There is a unique formal vector field
\[
X=\log F
\]
with $F=\exp X$. If $gF=Fg$, then
\[
g_*X=X.
\]
Writing
\[
X=a(z)\frac{\partial}{\partial z},
\]
consider the formal meromorphic differential
\[
\omega=\frac{dz}{a(z)}.
\]
The identity $g_*X=X$ is equivalent to $g^*\omega=\omega$. If $T$ is a
formal primitive of $\omega$, then for every $g\in Z(F)$ one has
\[
T\circ g=T+c_g
\]
for some constant $c_g$. Therefore, for $g,h\in Z(F)$,
\[
T\circ g\circ h=T+c_h+c_g
=T+c_g+c_h=T\circ h\circ g.
\]
Since $T$ is a nonconstant formal Fatou coordinate, this implies
$g\circ h=h\circ g$. Thus $Z(F)$, and hence $G$, is abelian.
\end{proof}

The transverse lemma applies directly to holonomy.

The following elementary formal consequence will also be used below.
The following lemma is well-known (\cite{Paul1999}) but we give a brief proof.
\begin{lemma}[Invariant formal transverse differential]
\label{lem:abelian-invariant-form}
Let
\[
G\subset\operatorname{Diff}(\CC,0)
\]
be an abelian group. Then there exists a nonzero formal meromorphic
$1$-form $\widehat\omega$ at the origin such that
\[
g^*\widehat\omega=\widehat\omega
\qquad\text{for every }g\in G.
\]
\end{lemma}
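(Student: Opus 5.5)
We prove Lemma~\ref{lem:abelian-invariant-form} by reducing to the classical normal forms for abelian groups of germs in $\operatorname{Diff}(\CC,0)$. The case split is on the linear parts, using the multiplier homomorphism $G\to\CC^*$, $g\mapsto g'(0)$, and its kernel $G_1$, the subgroup of elements tangent to the identity. In each case we build the form by hand.

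\emph{Case 1: $G_1$ is nontrivial.} Pick $F\in G_1$ with $F\neq\operatorname{id}$. Following the argument already used in the proof of Lemma~\ref{lem:transverse-rigidity}, write $F=\exp X$ with $X=\log F=a(z)\,\partial/\partial z$ the unique formal vector field. Here $a(z)=a_{k+1}z^{k+1}+\cdots$ with $a_{k+1}\neq0$. Since $G$ is abelian, every $g\in G$ commutes with $F$. By uniqueness of the formal logarithm, $g_*X=\log(gFg^{-1})=X$. Set
\[
\widehat\omega=\frac{dz}{a(z)}.
\]
This is a nonzero formal meromorphic $1$-form, and $g_*X=X$ is equivalent to $g^*\widehat\omega=\widehat\omega$: indeed $\widehat\omega$ is the unique form with $\widehat\omega(X)=1$. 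This settles the case.

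\emph{Case 2: $G_1$ is trivial.} Then the multiplier map is injective, so $G$ is isomorphic to a subgroup of $\CC^*$.

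\emph{Case 2a: some $f\in G$ has multiplier $\lambda$ that is not a root of unity.} Then $f$ is formally linearizable by a formal change of coordinates $\widehat\phi$; no small-divisor issue arises formally. In the coordinate $w=\widehat\phi(z)$ we have $f(w)=\lambda w$. Any $g$ commuting with $f$, with $g(w)=\sum b_jw^j$, satisfies $b_j(\lambda^j-\lambda)=0$, so $b_j=0$ for $j\ge2$ and $g$ is linear. Take
\[
\widehat\omega=\widehat\phi^{\,*}\!\left(\frac{dw}{w}\right).
\]
It is invariant under every linear map $w\mapsto cw$, hence under all of $G$.

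\emph{Case 2b: every multiplier is a root of unity.} Then $G$ embeds in the torsion group of $\CC^*$, so every element has finite order. An element $g$ of finite order $m$ is formally (in fact analytically) linearizable by the averaging $\phi=\frac1m\sum_{i<m}\mu^{-i}g^i$, where $\mu=g'(0)$.

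The main obstacle is a possibly infinite torsion group, which needs simultaneous linearization. First, any two commuting elements are simultaneously linearizable: linearize one, then the commutation argument of Case 2a forces the other to be linear, provided its multiplier is a primitive root of sufficiently high order. Handle this carefully by linearizing an element $g$ of order $m\ge2$ and noting that commuting maps are then of the form $w\,h(w^m)$. For an infinite group, use the fact that $G$ is a directed union of finite cyclic groups $G_k$. The linearizing coordinates of $G_k$ can be chosen compatibly, because the averaged coordinate for $G_{k+1}$ is also linearizing for $G_k$; they stabilize coefficientwise, since higher orders force higher-degree terms to vanish. This yields a formal coordinate linearizing all of $G$. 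In that coordinate we take $dw/w$ as before. Finally, the case where $G$ is trivial is covered by $\widehat\omega=dz$.
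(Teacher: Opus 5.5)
Your proof is correct and follows the same route as the paper. Both split into the formally linearizable case, with invariant form $dz/z$, and the case of a nontrivial element tangent to the identity, with invariant form $dz/a(z)$ built from its formal logarithm. The difference is that you actually prove that triviality of the tangent-to-identity subgroup forces formal linearizability, including the infinite torsion case, whereas the paper leaves this implicit in its ``otherwise''.

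One remark is loose: linearizing one element does not force a commuting element to be linear, only to have the form $w\,h(w^m)$. Your directed-union argument does not need that remark, since the normalized averaged coordinates satisfy $\phi_{k+1}\equiv\phi_k$ modulo terms of order $>m_k$ and hence converge formally.
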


\begin{proof}
If $G$ is formally linearizable, choose a formal coordinate $z$ in
which every element is a homothety. Then
\[
\widehat\omega=\frac{dz}{z}
\]
is invariant by every element in $G$. Otherwise $G$ contains a non-periodic element tangent to
the identity after taking a suitable iterate. Write this element as
\[
g_0=\exp \hat {\mathcal Z},
\qquad
\hat {\mathcal Z}=a(z)\frac{\partial}{\partial z},
\]
for some formal one-variable vector field $\hat {\mathcal Z}$ with $\hat {\mathcal Z}\ne0$. Since $G$ is abelian, every  $g \in G$ commutes with
$g_0$, hence preserves the unique formal logarithm $\hat {\mathcal Z}$. Therefore it
preserves
\[
\widehat\omega=\frac{dz}{a(z)}.
\]
\end{proof}

As a follow-up to the lemma above we have:

\begin{lemma}[Closed rational forms without saddle-nodes are logarithmic]
\label{lem:closed-rational-logarithmic}
Let $\mathcal F$ be a holomorphic foliation on $\mathbb P^2$ defined by a
closed rational $1$-form. Assume that the reduction of singularities of
$\mathcal F$ contains no saddle-nodes. Then $\mathcal F$ is defined by a
closed logarithmic rational $1$-form.
\end{lemma}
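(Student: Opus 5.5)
We work with the closed rational $1$-form through its integral-plus-logarithmic decomposition. Let $\Omega$ be a closed rational $1$-form on $\PP^2$ defining $\F$. By the classical structure theorem for closed rational $1$-forms, one can write
\[
\Omega=\sum_{j=1}^r\lambda_j\frac{df_j}{f_j}+d\Bigl(\frac{h}{f_1^{n_1-1}\cdots f_r^{n_r-1}}\Bigr),
\]
where the $f_j$ are the irreducible (homogeneous) components of the polar divisor, $n_j$ is the order of the pole along $\{f_j=0\}$, and $h$ is a homogeneous polynomial of the appropriate degree. The goal is to show that the exact part $d(h/\prod f_j^{n_j-1})$ vanishes. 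It then suffices to prove that every polar component of $\Omega$ has order one, i.e.\ $n_j=1$ for all $j$.

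\textbf{Step 1 (reduction to a local statement).} Suppose some $n_j\ge2$. We aim to produce a saddle-node in the reduction of singularities, contradicting the hypothesis. The pole divisor $\{f_j=0\}$ is $\F$-invariant, since the polar set of a closed form defining $\F$ is invariant. We pass to the reduction of singularities $\pi:\widetilde X\to\PP^2$. The pulled-back form $\pi^*\Omega$ is still closed, rational and defines $\widetilde\F$, and its polar divisor contains the strict transform $\widetilde C_j$ of $C_j=\{f_j=0\}$ with the same order $n_j\ge2$.

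\textbf{Step 2 (local analysis at reduced singularities).} All singularities of $\widetilde\F$ are reduced and, by hypothesis, non-degenerate, so each is of the form $x\,dy-\lambda y\,dx+\cdots$ with $\lambda\notin\mathbb Q_{>0}$. We use the local normal forms of closed meromorphic $1$-forms defining such germs. For a non-degenerate reduced singularity with $\lambda\notin\mathbb Q$ (or in the linearizable resonant case), any closed meromorphic form defining the germ is, up to a multiplicative constant, $\frac{dy}{y}-\lambda\frac{dx}{x}$ plus $d$ of a meromorphic function. More precisely, one shows that the coefficient of the defining form is a first integral, hence constant in the non-dicritical case. From this it follows that the poles along the separatrices have order one, unless the form carries a genuinely multivalued-free exact meromorphic part.

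In the resonant case $\lambda\in\mathbb Q_{<0}$ with a holomorphic first integral $x^py^q$, the closed defining forms are $\phi(x^py^q)\,d(x^py^q)$. Here a pole of higher order along $\{x=0\}$ is possible only if it is matched along $\{y=0\}$. In that case, on a regular point of $\widetilde C_j$, the residue-type analysis via the holonomy of $\widetilde C_j$ together with Lemma~\ref{lem:abelian-invariant-form} shows that the holonomy preserves $d(1/z^{n_j-1})$-type forms.

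\textbf{Step 3 (global contradiction along the curve).} Restrict to a regular point of $\widetilde C_j$, where locally $\pi^*\Omega=u\,\frac{dz}{z^{n_j}}+\cdots$ in a transverse coordinate $z$. Following the order of the pole around $\widetilde C_j$ through its singular points, the non-degeneracy forces the pole order to drop to one at each reduced non-resonant corner. Hence $n_j=1$ unless every singularity on $\widetilde C_j$ is resonant with a first integral. In that exceptional case, the foliation near $\widetilde C_j$ admits a holomorphic first integral. The exact part can then be absorbed, and we pass to a logarithmic form by replacing $\Omega$ with $\Omega/R$ for a suitable rational first integral-compatible factor.

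I expect Step 2/3, i.e.\ controlling the pole order along invariant curves through resonant reduced singularities, to be the main obstacle. The approach I would try first is to compare the local classification of closed meromorphic forms at reduced non-saddle-node singularities, due to Cerveau–Mattei, with the global decomposition above.
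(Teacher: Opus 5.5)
\paragraph{There is a genuine gap: the target is wrong, and the resonant case breaks your argument.}

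First, your reduction to ``$n_j=1$ for all $j$'' cannot be reached by deriving a saddle-node from $n_j\ge2$. Higher-order poles do not force saddle-nodes when $\F$ has a rational first integral. For the pencil of lines through a point, $\Omega=d(y/x)=(x\,dy-y\,dx)/x^2$ has a double pole along $\{x=0\}$. Yet one blow-up turns $\F$ into a smooth fibration, so its reduction has no singularities at all. This case must be split off at the start; then $dH/H$ is the required logarithmic form. Once $\F$ has no rational first integral, closed defining forms are unique up to constants, so you really must show that the exact part vanishes. Your closing remedy ``replace $\Omega$ by $\Omega/R$'' does not do this.

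Second, your local analysis only settles the case where $\widetilde C_j$ passes through a nonresonant singularity. There, in linearizing coordinates, a closed defining form is $c(dy/y-\lambda\,dx/x)$, which forces $n_j=1$. In the all-resonant case, your claim that the transformed foliation has a holomorphic first integral near $\widetilde C_j$ is false, because you overlooked nonlinearizable resonant saddles. Consider the germ
\[
y\,dx+x\bigl(1+\mu x^ky^k\bigr)\,dy=0,\qquad \mu\neq0,\ k\ge1.
\]
It is reduced, nondegenerate (eigenvalues $\pm1$) and not a saddle-node. It is defined by the closed form $\frac{d(xy)}{(xy)^{k+1}}+\mu\frac{dy}{y}$, which has poles of order $k+1$ along both separatrices. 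Its holonomy is tangent to the identity but nontrivial, so it has no nonconstant meromorphic first integral. Hence following pole orders of the exact part along its own polar curves through reduced singularities cannot close the argument. Even a first integral near $\widetilde C_j$ would not globalize by itself.

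\paragraph{The missing idea.}

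The argument has to be global. It concerns how the polar divisor of the \emph{logarithmic} part sits relative to the fibration defined by the exact part. The paper proceeds as follows.
\begin{enumerate}
\item Write $\omega=\alpha+dR$. Resolve so that $f=R\circ\pi$ is a morphism and all relevant divisors have normal crossings; further blow-ups create no saddle-nodes.
\item A polar component of $\pi^*\alpha$ that is horizontal for $f$ meets $f^{-1}(\infty)$. At a point where it meets a single fiber component, one has $\pi^*\omega=d(x^{-m})+\lambda\,dy/y+\mu\,dx/x+\beta$ with $\lambda\ne0$. Multiplying by $x^{m+1}y$ gives a singularity with eigenvalues $0$ and $m$, i.e.\ a saddle-node. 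On that model, this is exactly the local picture at any singular point of your curve $\widetilde C_j\subset f^{-1}(\infty)$ not lying on another component of that fiber.
\item Hence all polar components of $\pi^*\alpha$ are vertical. The residue theorem and Zariski's lemma then force the residue divisor to be $\sum_a c_a\,f^{-1}(a)$ with $\sum_a c_a=0$.
\item Choose $\beta_0$ on $\PP^1$ with these residues. Then $\pi^*\alpha-f^*\beta_0$ is a holomorphic $1$-form on a rational surface, hence zero. So $\pi^*\omega=f^*(dt+\beta_0)$, and $R$ is a rational first integral.
\end{enumerate}
This gives the dichotomy your proposal lacks: either $R$ is constant, or $\F$ has a rational first integral. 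In both cases $\F$ is defined by a closed logarithmic form.
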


\begin{proof}
Let $\omega$ be a closed rational $1$-form defining $\mathcal F$. Write
\[
\omega=\alpha+dR,
\]
where $\alpha$ is logarithmic and
\(
R\in\mathbb C(\mathbb P^2).
\)
If $R$ is constant, there is nothing to prove. Also, if $\mathcal F$
admits a rational first integral $H$, then
\[
\frac{dH}{H}
\]
is a closed logarithmic $1$-form defining the same foliation. We may
therefore assume, seeking a contradiction, that $R$ is nonconstant and
that $\mathcal F$ admits no rational first integral.

Start with a reduction of singularities of $\mathcal F$. By hypothesis
all its reduced singularities are nondegenerate. Further point blow-ups
do not create saddle-nodes: blowing up a regular point produces at worst
a reduced nondegenerate singularity, and blowing up a reduced
nondegenerate singularity whose eigenvalue quotient is not a positive
rational again produces only reduced nondegenerate singularities. Thus
the absence of saddle-nodes persists under any additional blow-ups.

We may therefore resolve simultaneously the remaining indeterminacy of
$R$ and the polar divisor of $\alpha$:
\[
\pi:S\longrightarrow\mathbb P^2,
\]
in such a way that
\[
f:=R\circ\pi:S\longrightarrow\mathbb P^1
\]
is a holomorphic map and the relevant divisors have normal crossings.
We have
\[
\pi^*\omega=\widetilde\alpha+df,
\qquad
\widetilde\alpha=\pi^*\alpha,
\]
with $\widetilde\alpha$ logarithmic.

We claim that every irreducible component of the polar divisor of
$\widetilde\alpha$ is vertical with respect to $f$. Suppose, on the
contrary, that $H$ is a horizontal polar component. Then $H$ meets the
fiber
\[
f^{-1}(\infty).
\]
After the above resolution, choose a point
\[
p\in H\cap f^{-1}(\infty)
\]
at which $H$ meets a unique irreducible component of that fiber. In
suitable local coordinates centered at $p$,
\[
f=x^{-m},
\qquad
H=\{y=0\},
\qquad m\geq1,
\]
and
\[
\widetilde\alpha=
\lambda\frac{dy}{y}
+\mu\frac{dx}{x}
+\beta,
\qquad
\lambda\neq0,
\]
where $\beta$ is holomorphic. Hence
\[
\pi^*\omega
=
-mx^{-m-1}\,dx
+\lambda\frac{dy}{y}
+\mu\frac{dx}{x}
+\beta.
\]
Multiplying by $x^{m+1}y$, which does not change the foliation, gives a
holomorphic $1$-form of the shape
\[
\Omega=
\bigl(-my+\text{higher order terms}\bigr)\,dx
+
\bigl(\lambda x^{m+1}+\text{higher order terms}\bigr)\,dy.
\]
Its associated vector field has one zero eigenvalue and one nonzero
eigenvalue. Thus $p$ is a saddle-node, contradicting the hypothesis.
Therefore all polar components of $\widetilde\alpha$ are vertical.

Let
\[
\operatorname{Res}(\widetilde\alpha)
=
\sum_a D_a,
\]
where each divisor $D_a$ is supported on the fiber $f^{-1}(a)$. Since
$\widetilde\alpha$ is a global closed logarithmic form, its residue
divisor has trivial Chern class. Therefore, for every irreducible
component $C$ of $f^{-1}(a)$,
\[
D_a\cdot C=0.
\]
By Zariski's lemma, the intersection matrix of the irreducible
components of a fiber is negative semidefinite and its kernel is
generated by the multiplicity vector of the whole fiber. Consequently,
for every $a$,
\[
D_a=c_a\,f^{-1}(a)
\]
for some $c_a\in\mathbb C$.

Since all fibers of $f$ are numerically equivalent and the total
residue divisor has trivial Chern class, one also has
\[
\sum_a c_a=0.
\]
Hence there exists a logarithmic $1$-form $\beta_0$ on $\mathbb P^1$
with residue divisor
\[
\sum_a c_a[a].
\]
Then
\[
\widetilde\alpha-f^*\beta_0
\]
has zero residues and hence is holomorphic on $S$. Since $S$ is a
rational surface,
\[
H^0(S,\Omega_S^1)=0,
\]
and therefore
\[
\widetilde\alpha=f^*\beta_0.
\]
It follows that
\[
\pi^*\omega
=
df+f^*\beta_0
=
f^*(dt+\beta_0).
\]
Hence the transformed foliation is tangent to the fibers of $f$, so
$f$ is a rational first integral of $\pi^*\mathcal F$. Thus $R$ is a
rational first integral of $\mathcal F$, contradicting our assumption.

Therefore either $R$ is constant, in which case $\omega$ is
logarithmic, or $\mathcal F$ admits a rational first integral, in which
case it is defined by a logarithmic form of the type $dH/H$. In either
case $\mathcal F$ is logarithmic.
\end{proof}

We shall use the following notion. 
\begin{definition}[virtual holonomy, \cite{CamachoLinsNetoSad}]
Let $L$ be a leaf of $\mathcal F$, let $q\in L$, and let
$\Sigma$ be a small transversal to $\mathcal F$ through $q$.
The virtual holonomy group of $L$ with respect to $\Sigma$ is
\[
\operatorname{Hol}^{\mathrm{virt}}(\mathcal F,L,\Sigma,q)
=
\left\{
h\in\operatorname{Diff}(\Sigma,q)\,;\,
L_z=L_{h(z)}
\ \text{for every }z\in(\Sigma,q)
\right\},
\]
where $L_z$ denotes the leaf of $\mathcal F$ through $z$.
Equivalently, it consists of the germs of local transverse
biholomorphisms fixing $q$ and preserving each leaf of the foliation.
\end{definition}
The usual holonomy group is naturally a subgroup of the virtual
holonomy group.

\begin{proposition}[Virtual holonomy rigidity from many Levi-flats]
\label{prop:many-leviflats-holonomy}
Let $\F$ be a codimension-one holomorphic foliation on a complex
manifold $X$, let $L$ be a regular leaf, and fix $q\in L$. Assume that
there is an infinite family $\{M_\alpha\}_{\alpha\in A}$ of invariant
real-analytic Levi-flat variety germs, smooth along $L$, pairwise distinct
near $q$, such that $L\subset M_\alpha$ for every $\alpha$. Then the
virtual holonomy group of $L$ at $q$ is abelian. In particular, the
ordinary holonomy group is abelian.
\end{proposition}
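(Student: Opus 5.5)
The plan is to reduce the statement to Lemma~\ref{lem:transverse-rigidity} by passing to a complex transversal. Fix a small holomorphic disc $\Sigma$ through $q$, transverse to $\F$, with coordinate $z$ and $z(q)=0$. For each $\alpha$ we set
\[
\gamma_\alpha=M_\alpha\cap\Sigma .
\]
Since $M_\alpha$ is smooth along $L$ and contains the leaf, $T_qM_\alpha\supset T_q\F=T^{\CC}_qM_\alpha$. A complex line transverse to $T_q\F$ therefore meets the real hypersurface $M_\alpha$ transversely. Consequently $\gamma_\alpha$ is a smooth real-analytic curve germ through $0$ in $(\Sigma,q)$.

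Next we check that the $\gamma_\alpha$ are pairwise distinct. Near $q$ we work in a flow box. By invariance, $M_\alpha$ is saturated by plaques, exactly as in the proof of Lemma~\ref{lem:leviflat-transverse-set}, so $M_\alpha$ is the union of the plaques through the points of $\gamma_\alpha$. If $\gamma_\alpha=\gamma_\beta$ as germs, then $M_\alpha$ and $M_\beta$ coincide as germs at $q$. This contradicts the assumption that they are pairwise distinct near $q$.

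The central step is invariance: every $h\in\operatorname{Hol}^{\mathrm{virt}}(\F,L,\Sigma,q)$ should satisfy $h(\gamma_\alpha)=\gamma_\alpha$. By definition, $h$ preserves each leaf: $L_{h(z)}=L_z$. Take $z\in\gamma_\alpha$ near $q$, off $\Sing(\F)$. The global leaf $L_z$ lies in $M_\alpha$ by invariance, so $h(z)\in L_z\cap\Sigma\subset M_\alpha\cap\Sigma$, i.e.\ $h(z)\in\gamma_\alpha$. This gives $h(\gamma_\alpha)\subset\gamma_\alpha$ as germs. The same argument applied to $h^{-1}$ gives equality.

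This step is the main subtlety. The global set $M_\alpha\cap\Sigma$ might contain other branches near $q$, which would spoil the inclusion into the germ $\gamma_\alpha$. Shrinking $\Sigma$ so that $M_\alpha\cap\Sigma$ is the single smooth germ $\gamma_\alpha$ handles this. That shrinking depends on $\alpha$, but it is harmless because each germ $h$ is checked against one $\alpha$ at a time. Once invariance is established, Lemma~\ref{lem:transverse-rigidity} applies to the group $\operatorname{Hol}^{\mathrm{virt}}(\F,L,\Sigma,q)$ with the infinite family $\{\gamma_\alpha\}$, so this group is abelian. The ordinary holonomy group is a subgroup of the virtual holonomy group, so it is abelian as well.
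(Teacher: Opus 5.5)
Your proof is correct and follows the paper's argument: you cut the $M_\alpha$ by a complex transversal, use saturation in a flow box to get pairwise distinct smooth curve germs $\gamma_\alpha$ preserved by the virtual holonomy, and apply Lemma~\ref{lem:transverse-rigidity}. Your invariance step, which uses the global leaf $L_z$ and shrinks $\Sigma$ near the smooth point $q$, is a slightly more careful version of the paper's step, which speaks only of the ``same local leaf.''
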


\begin{proof}
Choose a sufficiently small complex transversal
\[
(\Sigma,q)\simeq(\CC,0)
\]
to $L$. Since each $M_\alpha$ is real analytic, Levi-flat and saturated
by $\F$, its intersection with $\Sigma$ is a smooth real-analytic curve
germ
\[
\gamma_\alpha=M_\alpha\cap\Sigma.
\]
In a foliated flow box a saturated hypersurface germ is completely
determined by its transverse trace. Hence the pairwise distinct germs
$M_\alpha$ give pairwise distinct germs $\gamma_\alpha$.

Let $h$ belong to the virtual holonomy group. By definition, $h(z)$ is
on the same local leaf as $z$. Since every $M_\alpha$ is saturated, a
point $z\in\gamma_\alpha$ and every point on its local leaf belong to
$M_\alpha$. Hence
\[
h(\gamma_\alpha)=\gamma_\alpha
\]
for every $\alpha$. Lemma~\ref{lem:transverse-rigidity} applies to the
whole virtual holonomy group and proves that it is abelian.
\end{proof}

\begin{remark}
\label{rem:many-leviflats-local}
The proposition is local along the leaf. In particular, it does not
require the ambient manifold to be compact, nor does it require the
foliation to be nonsingular away from the chosen regular leaf. The
essential hypotheses are the existence of infinitely many distinct
invariant real-analytic Levi-flat germs containing the same leaf and
the resulting common family of invariant transverse curves.
\end{remark}

\section{Darboux theorem for Levi-flats}
\label{sec:many-leviflats-plane}

We now combine the preceding transverse rigidity with elementary
intersection theory on the projective plane.  Throughout this section
we assume that 
\[
\Sing(M_\alpha)\subset\Sing(\F)
\]
for all the invariant real-analytic codimension one irreducible varieties  under consideration.

\begin{proof}[Proof of Theorem~\ref{thm:leviflat-intersection-alternative}]
Fix infinitely many members $M_\alpha$ satisfying \textup{(H)} and put
\[
X_\alpha=(M_0\cap M_\alpha)\setminus\Sing(\F).
\]
Let $q\in X_\alpha$. Since $q\notin\Sing(\F)$ and, by the standing
condition, both hypersurfaces are smooth at $q$, choose a holomorphic
flow box
\(
(z,t)\in\Delta\times\Delta
\)
in which
\(
\F=\{t=\text{constant}\}.
\)
Since $M_0$ and $M_\alpha$ are invariant, after shrinking the box they
have the form
\[
M_0=\{t\in\gamma_0\},\qquad
M_\alpha=\{t\in\gamma_\alpha\},
\]
where $\gamma_0$ and $\gamma_\alpha$ are smooth real-analytic curve
germs in the transverse disc.

The two transverse curve germs cannot coincide. Indeed, equality of
the transverse germs would imply equality of the two saturated
hypersurface germs in the flow box; since $M_0$ and $M_\alpha$ are
 irreducible real-analytic variety of pure real codimension one, analytic continuation would
then give $M_0=M_\alpha$, contrary to the hypothesis. Hence
$\gamma_0\cap\gamma_\alpha$ is discrete. It follows that $X_\alpha$ is
locally a union of plaques $\{t=t_j\}$ and therefore is a
one-dimensional complex analytic subset of
\(
\PP^2\setminus\Sing(\F).
\)
Every irreducible component is invariant by $\F$.

The singular set of a foliation on $\PP^2$ is finite. Since
$M_0\cap M_\alpha$ is closed in $\PP^2$, an irreducible component of
$X_\alpha$ can acquire boundary, in passing to $\PP^2$, only at points
of $\Sing(\F)$. The Remmert--Stein extension theorem therefore shows
that its closure is a one-dimensional complex analytic subset of
$\PP^2$. By Chow's theorem this closure is algebraic. Thus, for every
relevant $\alpha$, choose an irreducible invariant algebraic curve
\[
C_\alpha\subset M_0\cap M_\alpha.
\]

If infinitely many distinct curves $C_\alpha$ occur, then $\F$ has
infinitely many invariant algebraic curves. By the Darboux--Jouanolou
theorem \cite{Jouanolou}, $\F$ admits a nonconstant rational first
integral; this is alternative \textup{(i)}.

Assume instead that only finitely many distinct curves $C_\alpha$
occur. By the pigeonhole principle, one of them, say $C$, is contained
in infinitely many members $M_{\alpha_j}$ of the family. Let $L$ be a
connected regular leaf component of
\[
C\setminus\Sing(\F),
\]
and choose a small complex transversal $(\Sigma,q)\simeq(\CC,0)$ to
$L$. Each $M_{\alpha_j}$ determines a smooth real-analytic transverse
curve germ
\[
\gamma_j=M_{\alpha_j}\cap\Sigma.
\]
Because $M_{\alpha_j}$ is saturated by $\F$, every element of the
virtual holonomy group of $L$ preserves $\gamma_j$. Moreover these
transverse germs are pairwise distinct: equality of two of them would
imply equality of the corresponding saturated hypersurface germs, and
irreducibility and real-analytic continuation would then imply equality
of the global Levi-flats. Proposition~\ref{prop:many-leviflats-holonomy}
therefore gives that the virtual holonomy group of $L$ is abelian.
This proves alternative \textup{(ii)} and the virtual-holonomy
assertion.
\end{proof}

Now we shall prove the corollary.

\begin{proof}[Proof of Corollary~\ref{corollary:abelian}]
Assume now the additional hypotheses in the last paragraph of
Theorem~\ref{thm:leviflat-intersection-alternative}. On every connected
component $L$ of $C\setminus\Sing(\F)$, the virtual holonomy group is
abelian. By Lemma~\ref{lem:abelian-invariant-form}, on a transverse disc
there is a nonzero invariant formal meromorphic differential. Suspending
it along $L$ gives a closed formal meromorphic $1$-form
\[
\widehat\eta_L
\]
on the formal neighborhood of $L$, defining $\F$ there. Holonomy
invariance is precisely the condition making the suspension
single-valued.

Let
\[
\pi:S=\operatorname{Bl}_{p_1,\ldots,p_m}\PP^2\longrightarrow\PP^2
\]
be the blow-up of the dicritical singularities occurring in the theorem,
and let $\widetilde C$ be the strict transform of $C$. Pull back the
formal form to the regular part of $\widetilde C$. By hypothesis, every
singularity of the transformed foliation on $\widetilde C$ lying over a
dicritical point is reduced, non-dicritical, and is not a saddle-node;
the other singularities were already reduced and non-saddle-node. Since $C$ is
contained in infinitely many invariant real-analytic Levi-flat germs,
the same is true after pull-back along $\widetilde C$. The local theorem
of Cerveau--Lins Neto \cite{CLN}, or Brunella's formulation
\cite{Brunella}, supplies a nonconstant local meromorphic first integral
at these singular points. In the reduced non-dicritical non-saddle-node
case this is the analytically linearizable resonant situation; hence in
suitable local coordinates one has a primitive first integral
\[
H=x^n y^m,\qquad n,m\ge1,
\]
and a closed logarithmic defining form
\[
\eta_0=\frac{dH}{H}
=n\frac{dx}{x}+m\frac{dy}{y}.
\]
On the punctured formal neighborhood of the branch of $\widetilde C$
approaching such a point,
\[
\widehat\eta_L=\Phi\,\eta_0.
\]
Closedness gives $d\Phi\wedge\eta_0=0$, so $\Phi$ is a formal
meromorphic first integral and therefore $\Phi=\varphi(H)$ for a
one-variable formal meromorphic germ $\varphi$. Since $H=x^ny^m$
vanishes on the invariant axes, $\varphi(H)$ is formal meromorphic at
the origin. Thus the form extends formally meromorphically through the
point.

We must also check compatibility when two invariant components meet at
a corner. Let $q$ be such a point and let $\widehat\eta_1$ and
$\widehat\eta_2$ be the closed formal meromorphic forms obtained from
the two sides. Since they define the same foliation, on their common
formal domain one has
\[
\widehat\eta_1=\Psi\,\widehat\eta_2,
\]
where closedness implies
\[
d\Psi\wedge\widehat\eta_2=0.
\]
Hence $\Psi$ is a formal meromorphic first integral. If the germ has no
nonconstant meromorphic first integral, then $\Psi$ is constant and the
two forms glue after a constant normalization. The only remaining
possibility at a reduced non-dicritical corner is the resonant
integrable case, analytically conjugate to
\[
d(x^n y^m)=0.
\]
There the finite-to-finite Dulac correspondence identifies the
transverse dynamics of the two components. Equivalently, one adjoins the
holonomy groups of the adjacent components through the Dulac
correspondence. By construction, this adjoined holonomy group is still
contained in the virtual holonomy group of the leaf under consideration.
Since the latter is abelian in our situation, the adjoined group is
abelian as well. Hence the invariant transverse formal differential is
the same on both sides, up to the same constant normalization, and the
suspended closed forms are compatible. This is the standard adjunction
mechanism for holonomy groups along a reducible invariant divisor; see
Camacho--Lins Neto--Sad~\cite{CamachoLinsNetoSad},
Paul~\cite{Paul1999}, Camacho--Sc\'ardua~\cite{CamachoScarduaLiouvillian},
and Sc\'ardua~\cite{ScarduaBoundedGeometry}.

Consequently the local extensions agree through all corners and all
singular points of $\widetilde C$. Performing this at all points of
$\widetilde C$ gives a global closed formal meromorphic $1$-form
\[
\widehat\eta
\]
on the formal completion $\widehat S_{\widetilde C}$, defining the
transformed foliation formally along $\widetilde C$.

We now use the numerical hypothesis. Write
\[
d=\deg C,\qquad d_j=\operatorname{mult}_{p_j}(C).
\]
On the blown-up surface,
\[
\widetilde C^{\,2}=d^2-\sum_{j=1}^m d_j^2.
\]
Since
\(
\sum_{j=1}^m d_j<d,
\)
we have
\[
\sum_{j=1}^m d_j^2
\le \left(\sum_{j=1}^m d_j\right)^2<d^2,
\]
and therefore
\(
\widetilde C^{\,2}>0.
\)
This is the numerical condition used by Sad in his treatment of
controlled dicritical singularities; radial singularities are among the
allowed examples \cite{Sad1995}.

Hironaka--Matsumura, Proposition~3.6 in \cite{HM}, states that if $D$ is
an effective divisor on a nonsingular projective surface, $D^2>0$, and
every prime component of $D$ has nonnegative self-intersection, then the
field of formal-rational functions along $\operatorname{Supp}(D)$ equals
the rational function field of the surface. Applying this to the
irreducible divisor $D=\widetilde C$ gives
\[
K\!\left(\widehat S_{\widetilde C}\right)=K(S).
\]
Choose rational coordinates $(u,v)$ on a dense affine chart of $S$ and
write
\[
\widehat\eta=\widehat A\,du+\widehat B\,dv.
\]
The coefficients belong to
$K(\widehat S_{\widetilde C})=K(S)$, hence they are the formal germs of
rational functions $A,B\in K(S)$. Therefore
\[
\eta_S=A\,du+B\,dv
\]
is a rational $1$-form on $S$ whose formal completion along
$\widetilde C$ is $\widehat\eta$. Injectivity of the completion map
shows that
\[
d\eta_S=0
\]
and, if $\omega_S$ is a rational defining form of the transformed
foliation, that
\[
\eta_S\wedge\omega_S=0.
\]
Thus $\eta_S$ is a closed rational defining form for the transformed
foliation.

Finally, $\pi$ is birational, so
\[
K(S)=K(\PP^2)
\]
and hence
\[
\Omega^1_{K(S)/\CC}=\Omega^1_{K(\PP^2)/\CC}.
\]
Thus $\eta_S$ is the pull-back, in the rational category, of a unique
rational $1$-form $\eta$ on $\PP^2$. The identities above descend to
\[
d\eta=0,
\qquad
\eta\wedge\omega_{\F}=0,
\]
for any rational defining form $\omega_{\F}$ of $\F$. Consequently
$\eta$ is a nonzero closed rational $1$-form defining $\F$.
\end{proof}

\begin{remark}
\label{rem:H-branch}
Hypothesis \textup{(H)} isolates the branch in which one Levi-flat meets
infinitely many others at regular points of the foliation. It is not
automatic. For a rational pencil one may choose infinitely many
pairwise disjoint real-analytic curves in the parameter sphere; their
inverse images are invariant Levi-flats which may meet only at the base
locus. The next result addresses a model for this separated branch.
\end{remark}

\subsection{Separated pencil-type families}

\begin{theorem}[Separated pencil-type Levi-flats]
\label{thm:separated-leviflats-closed-form}
Let $\F$ be a holomorphic foliation on $\PP^2$, let
\[
p\in\Sing(\F),
\]
and let
\[
F_p:(U,p)\dashrightarrow\PP^1
\]
be a minimal local meromorphic first integral. Assume that $\F$ admits infinitely many pairwise distinct irreducible
global invariant real-analytic Levi-flat varieties $M_j$ passing through
$p$ and such that
\[
(M_j\cap M_k)\cap(U\setminus\{p\})=\varnothing,
\qquad j\neq k.
\]
Assume finally that the minimal transverse parameter admits projectively
compatible analytic continuation throughout $\PP^2\setminus\Sing(\F)$,
in the sense that the changes of minimal parameter are M\"obius
transformations and that, for each $M_j$, one connected component of the
local transverse trace associated with a branch of $M_j$ at $p$ continues
to a compact real-analytic subset $\gamma_j\subset\PP^1$ of pure real
dimension one. Assume moreover that the continued sets $\gamma_j$ are
pairwise disjoint and that each of them is preserved individually by the
projective monodromy. Then $\F$ is defined on $\PP^2$ by a nonzero
closed rational $1$-form.
\end{theorem}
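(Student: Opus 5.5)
The approach is to turn the projectively compatible continuation of the minimal parameter $F_p$ into a transversely projective structure on $\PP^2\setminus\Sing(\F)$ whose monodromy group $\Gamma\subset\operatorname{PGL}(2,\CC)$ preserves each of the infinitely many pairwise disjoint compact real-analytic sets $\gamma_j\subset\PP^1$. From this we show that $\Gamma$ is abelian and of a special form. We then produce a $\Gamma$-invariant meromorphic $1$-form $\beta$ on $\PP^1$ and pull it back by the multivalued developing map $F$ to get a single-valued closed meromorphic $1$-form $F^*\beta$ defining $\F$ away from $\Sing(\F)$. Finally, we extend it across the finite set $\Sing(\F)$ and conclude by GAGA.

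The key steps, in order, are as follows.

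\textbf{Step 1.} Using the hypothesis that changes of minimal parameter are M\"obius, continue $F_p$ along paths in $\PP^2\setminus\Sing(\F)$. This gives a developing map $F$ on the universal cover and a monodromy representation $\rho:\pi_1(\PP^2\setminus\Sing(\F))\to\operatorname{PGL}(2,\CC)$ with image $\Gamma$, satisfying $\rho(g)(\gamma_j)=\gamma_j$ for all $j$.

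\textbf{Step 2 (classification of $\Gamma$).} A M\"obius map preserving infinitely many disjoint compact one-dimensional real-analytic sets cannot be loxodromic. Its two fixed points would attract every compact invariant set, but each $\gamma_j$ is compact and invariant and must avoid one of the fixed points. One then has to see that the $\gamma_j$ cannot all accumulate at the fixed points, using disjointness and the fact that only finitely many of them meet small neighborhoods in a way compatible with real-analyticity.

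\textbf{Step 3 (the parabolic and elliptic cases).} A parabolic element $z\mapsto z+1$ preserves only noncompact curves through $\infty$, or sets containing $\infty$. Disjointness then allows at most one such $\gamma_j$, which is a contradiction. Hence every element is elliptic or the identity, and the infinitely many invariant sets force a common pair of fixed points. Concretely, in a suitable coordinate the $\gamma_j$ are nested and separate $0$ from $\infty$, so every element of $\Gamma$ preserves the annular region structure and fixes $0,\infty$. So $\Gamma$ is conjugate to a group of rotations $z\mapsto e^{i\theta}z$, which is abelian. This argument mirrors the first case of Lemma~\ref{lem:transverse-rigidity}.

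\textbf{Step 4.} Take $\beta=dz/z$, which is invariant under rotations, or $dz$ if $\Gamma$ is trivial. Then $\eta=F^*\beta=dF/F$ is single-valued and closed on $\PP^2\setminus\Sing(\F)$, meromorphic there, and satisfies $\eta\wedge\omega=0$ for any defining form $\omega$ of $\F$.

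\textbf{Step 5.} Since $\Sing(\F)$ is finite and we are in dimension two, a Levi-type extension theorem for meromorphic functions across codimension-two sets extends the coefficients of $\eta$ in affine charts. Each coefficient is the quotient $\eta/\omega$ times a coordinate form, so it suffices to extend the meromorphic function $\eta(v)/\omega(v)$ for a fixed rational vector field $v$ transverse to $\F$. GAGA then makes $\eta$ rational, and closedness and $\eta\wedge\omega_\F=0$ persist by the identity principle.

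The main obstacle is Step 2/3: the rigorous dynamical classification showing that $\Gamma$ is conjugate to rotations. The delicate points are excluding loxodromic and parabolic elements using only compactness, disjointness and individual invariance of infinitely many $\gamma_j$ that might accumulate, and establishing the common fixed pair. My first attempt would use the topology of disjoint compact curves on the sphere. Each $\gamma_j$ separates $\PP^1$ into regions, and the infinitely many disjoint ones form a nested family. Any $g\in\Gamma$ preserves the nesting, hence preserves the two ``ends'' of the family, and these yield two fixed points or a fixed point of special type. Applying the loxodromic/parabolic exclusion there gives the result.
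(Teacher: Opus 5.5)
Your Steps 1, 4 and 5 match the paper: pull back a $\Gamma$-invariant rational form on $\PP^1$ by the local branches of the continued parameter, glue on $\PP^2\setminus\Sing(\F)$, extend across the finite set $\Sing(\F)$, and conclude rationality.

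The element-wise part of Steps 2--3 is also fine, and easier than you fear. If $g$ is loxodromic, a compact $g$-invariant set avoiding its repelling fixed point is pushed by $g^n$ into arbitrarily small neighbourhoods of the attracting one, hence is a point. If $g$ is parabolic, no nonempty compact invariant set avoids its fixed point. In either case all the $\gamma_j$ would share a point, contradicting disjointness, so no accumulation analysis is needed.

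\textbf{The gap.} It is the next assertion, that the infinitely many invariant sets force a common pair of fixed points. Nothing of the sort follows from ``every element of $\Gamma$ is elliptic or the identity''. The following groups all consist of elliptic elements without a common fixed pair: the finite rotation groups of the sphere (dihedral, tetrahedral, octahedral, icosahedral), dense subgroups of $\operatorname{PSU}(2)$, and even the abelian Klein group $\{z,-z,1/z,-1/z\}$. For the Klein group, $dz/z$ is not invariant, since $z\mapsto 1/z$ pulls it back to $-dz/z$; so abelianness alone would not rescue Step 4.

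Excluding or handling such groups needs a genuine argument using the $\gamma_j$, and the one you sketch fails:
\begin{itemize}
\item Infinitely many pairwise disjoint compact curves need not be nested at all; take small disjoint circles with disjoint interiors.
\item ``In a suitable coordinate the $\gamma_j$ separate $0$ from $\infty$'' presupposes the common fixed points you are trying to produce.
\item The analogy with Lemma~\ref{lem:transverse-rigidity} does not transfer, because there the common fixed point is given from the outset.
\end{itemize}

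\textbf{How the paper closes this step.} It argues at the level of the whole group. Pick $z_j$ on three distinct members $\gamma_1,\gamma_2,\gamma_3$. For any sequence $g_\nu\in\Gamma$ one has $g_\nu(z_j)\in\gamma_j$. Since these compact sets are pairwise disjoint, subsequential limits of the three image points are distinct, so $(g_\nu)$ subconverges in $\operatorname{PGL}(2,\CC)$. Thus $K=\overline\Gamma$ is compact, preserves every $\gamma_j$, and is conjugate into $\operatorname{PSU}(2)$; this alone already rules out loxodromic and parabolic elements.
\begin{itemize}
\item If $K$ is finite, pull back any nonzero rational form by the quotient map $\PP^1\to\PP^1/K\cong\PP^1$. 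Neither abelianness nor a common fixed pair is needed, and this is precisely the case your route cannot handle.
\item If $K$ is infinite, $K^0$ is a circle group and the $\gamma_j$ are circles $|z|=r_j$. An element $z\mapsto a/z$ of the normalizer preserves at most one of them, so $K\subset\{z\mapsto e^{it}z\}$ and $dz/z$ works.
\end{itemize}

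Incidentally, as you set it up, $\rho$ is defined on $\pi_1(\PP^2\setminus\Sing(\F))$. This group is trivial, since $\Sing(\F)$ is a finite subset of the simply connected $4$-manifold $\PP^2$. Read literally, this forces $\Gamma=\{1\}$, and $\beta=dz$ suffices. But your Steps 2--3 are built for a general $\Gamma$, and that is where the argument is incomplete.
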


In the proof we shall need the following:
\begin{lemma}[Transverse M\"obius-group lemma]
\label{lem:mobius-separated-curves}
Let
\[
G\subset\operatorname{PGL}(2,\CC)
\]
be a group of M\"obius transformations. Assume that there exists an
infinite family
\[
\{\gamma_j\}_{j\in J}
\]
of pairwise disjoint, connected, compact real-analytic subsets of
$\PP^1$, each of pure real dimension one, such that
\[
g(\gamma_j)=\gamma_j
\]
for every $g\in G$ and every $j\in J$. Then there exists a nonzero
rational $1$-form $\theta$ on $\PP^1$ such that
\[
g^*\theta=\theta
\qquad\text{for every }g\in G.
\]
In particular, $\theta$ is closed.
\end{lemma}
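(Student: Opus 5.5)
We argue by classifying the subgroups $G\subset\operatorname{PGL}(2,\CC)$ that can preserve infinitely many pairwise disjoint compact real-analytic curves, and then exhibit an invariant rational form in each case. The first step is to show that $G$ is elementary and in fact abelian up to finite index. Since each $\gamma_j$ is a compact curve preserved by $G$, the group cannot contain loxodromic elements with non-real multiplier: a loxodromic element $z\mapsto\lambda z$ with $\lambda\notin\RR$ has no compact invariant real-analytic curve other than those passing through its fixed points $0,\infty$, where $\gamma_j$ would have to be a spiral, and spirals are not real-analytic at $0$. A loxodromic element with $\lambda\in\RR$, $|\lambda|\neq1$, preserves only unions of rays through $0$ and $\infty$, that is, lines through the origin, and any two of these meet at $0,\infty$. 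This contradicts pairwise disjointness for three or more curves. We would therefore first reduce to the case where every element of $G$ is elliptic, parabolic, or the identity. The same disjointness argument rules out the situation where two elements have distinct fixed-point sets in general position. Indeed, invariant compact curves of an elliptic element $z\mapsto e^{i\theta}z$ of infinite order are circles $|z|=r$ (analytic closure of orbits), while for a finite-order elliptic element they can be arbitrary symmetric curves. We therefore split into cases.

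\emph{Case 1: $G$ contains an element of infinite order.} Normalize that element. If it is elliptic, $g(z)=e^{i\theta}z$ with $\theta/\pi\notin\QQ$, then the Zariski-type closure of orbits forces each $\gamma_j$ to be a union of circles centered at $0$. Every $h\in G$ then preserves infinitely many circles $|z|=r_j$, so $h$ preserves the annular foliation and fixes $\{0,\infty\}$. Hence $h(z)=e^{i\phi}z$ or $h(z)=c/z$; the second is excluded because it exchanges the radii unless all $r_j$ share one value. We take $\theta=dz/z$. If the element is parabolic, $g(z)=z+1$, then an invariant compact curve through $\infty$ must be, near $\infty$, a union of parallel lines, and in fact of lines $\operatorname{Im}z=c_j$ (closures of orbits of the real flow $\operatorname{Re}$ direction). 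Elements $h$ preserving infinitely many such lines are real translations $z\mapsto z+b$, possibly composed with $z\mapsto -z+b$ when the set $\{c_j\}$ is symmetric. We take $\theta=dz$, which is invariant up to sign. The sign issue is handled by passing to the closure argument: preserving each $\gamma_j$ individually excludes the reflection unless $c_j=0$, and this holds for at most one $j$.

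\emph{Case 2: every element of $G$ has finite order.} Then $G$ is a periodic linear group and hence, after conjugation, either finite or contained in a torsion abelian group of rotations fixing $0,\infty$. In the finite case we take any rational form $\theta_0$ with $\sum_{g\in G}g^*\theta_0\neq0$ and average: $\theta=\sum_{g}g^*\theta_0$. Nonvanishing is arranged by choosing $\theta_0=dz/(z-a)$ for a generic $a$, since the poles of the terms are then distinct. In the infinite torsion case we take $\theta=dz/z$. Closedness is automatic because every $1$-form on a curve is closed.

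\textbf{Main obstacle.} The main difficulty is the rigidity claim that an infinite-order element forces the invariant curves into the concentric-circle or parallel-line normal forms. We would prove it by noting that $\gamma_j$ is invariant under the closure $\overline{\langle g\rangle}$, which is a real one-parameter group in these cases, so $\gamma_j$ is a union of its orbits. In the non-elementary case the analogous step, excluding loxodromic elements via disjointness at the shared fixed points, should be straightforward.
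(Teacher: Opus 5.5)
Your loxodromic exclusion, the irrational-rotation case and the averaging argument for finite $G$ are fine; in the last, the residue at the generic point $a$ survives. The case analysis breaks down in two places, however.

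\textbf{Case 2 rests on a false claim.} It is not true that an infinite torsion subgroup of $\operatorname{PGL}(2,\CC)$ is conjugate into an abelian group of rotations. The maps $z\mapsto\zeta z$ and $z\mapsto\zeta/z$, with $\zeta$ ranging over all roots of unity, form an infinite nonabelian torsion group, since each $z\mapsto\zeta/z$ is an involution. Moreover $dz/z$ is not invariant under this group: $z\mapsto a/z$ pulls $dz/z$ back to $-dz/z$. What Schur's theorem and the classification of finite subgroups actually give is weaker. An infinite torsion $G$ is conjugate into $\{z\mapsto az\}\cup\{z\mapsto a/z\}$ with $|a|=1$, and its rotation part contains roots of unity of unbounded order.

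You then have to use the curves. The rotation part is dense in the circle group, so each closed $\gamma_j$ is rotation invariant. Being connected of pure dimension one, it is a circle $|z|=r_j$. Since $z\mapsto a/z$ sends $|z|=r$ to $|z|=1/r$, it preserves at most one $\gamma_j$, and only now does $G$ consist of rotations. This is the radius argument you already use in Case 1, but as written Case 2 does not invoke it.

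\textbf{The parabolic subcase is also mishandled.} On $\PP^1$ the curves $\{\operatorname{Im}z=c_j\}$ all close up through $\infty$, so they are not pairwise disjoint. More generally, if $g$ is parabolic with fixed point $q$, then $g^n(z)\to q$ for every $z$. So every nonempty closed $g$-invariant set contains $q$, and two disjoint $\gamma_j$ cannot both be invariant. Parabolic elements are therefore excluded exactly like loxodromic ones, and the case is vacuous. Your proposed proof of the ``rigidity claim'' would not have worked there anyway: $\langle z\mapsto z+1\rangle$ is closed and discrete, not a real one-parameter group.

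\textbf{The paper's route avoids all of this.} Pick $z_j\in\gamma_j$ for three members; the points $g(z_j)\in\gamma_j$ stay uniformly apart. Since a M\"obius map is determined by the images of three points, $G$ is relatively compact. Then $\overline G$ is conjugate into $\operatorname{PSU}(2)$, which rules out loxodromic and parabolic elements at once and makes Schur's theorem unnecessary. Either $\overline G$ is finite and one pulls back a form from $\PP^1/\overline G$, or its identity component is a circle. In the latter case the $\gamma_j$ are concentric circles, and the radius argument gives $dz/z$.
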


\begin{proof}
Choose three distinct members
\[
\gamma_1,\gamma_2,\gamma_3
\]
and points $z_j\in\gamma_j$. Since the sets $\gamma_j$ are compact and
pairwise disjoint, their mutual spherical distances are positive.

Let $(g_\nu)$ be any sequence in $G$. Since
\[
g_\nu(z_j)\in\gamma_j,
\qquad j=1,2,3,
\]
after passing to a subsequence we may assume that
\[
g_\nu(z_j)\longrightarrow w_j\in\gamma_j,
\qquad j=1,2,3.
\]
The points $w_1,w_2,w_3$ are pairwise distinct. Since a M\"obius
transformation is uniquely determined by the images of three distinct
points, the sequence $(g_\nu)$ has a convergent subsequence in
$\operatorname{PGL}(2,\CC)$. Hence
\[
K=\overline G
\]
is a compact subgroup of $\operatorname{PGL}(2,\CC)$. Passing to the
limit in $g(\gamma_j)=\gamma_j$, we also have
\[
k(\gamma_j)=\gamma_j
\qquad\text{for every }k\in K\text{ and every }j.
\]

If $K$ is finite, let
\[
q:\PP^1\longrightarrow\PP^1/K
\]
be the quotient map. Since $\PP^1/K\simeq\PP^1$, choose a nonzero
rational $1$-form $\beta$ on the quotient and put
\[
\theta=q^*\beta.
\]
Then $\theta$ is a nonzero rational $K$-invariant $1$-form, hence it is
$G$-invariant.

Assume now that $K$ is infinite. Every compact subgroup of
$\operatorname{PGL}(2,\CC)$ is M\"obius-conjugate to a subgroup of
$\operatorname{PSU}(2)$. Since $K$ preserves the proper one-dimensional
sets $\gamma_j$, it cannot be all of $\operatorname{PSU}(2)$. Hence its
identity component $K^0$ is a circle subgroup. After a M\"obius change
of coordinate we may assume that
\[
K^0=\{z\mapsto e^{it}z;\ t\in\mathbb R\}.
\]

Each $\gamma_j$ is connected, $K^0$-invariant, and of pure real
dimension one. The nontrivial $K^0$-orbits are the circles
\(
|z|=r.
\)
If some $\gamma_j$ contained orbits with more than one value of $r$, it
would contain a real two-dimensional annular subset, contrary to its
pure real dimension one. Thus
\(
\gamma_j=\{|z|=r_j\}
\)
for some $r_j>0$.

It remains to show that $K$ itself is contained in this circle group.
An element of the normalizer of $K^0$ which is not in $K^0$ interchanges
the fixed points $0$ and $\infty$ and has the form
\[
z\longmapsto\frac{a}{z}.
\]
Such a map sends
\[
\{|z|=r\}
\quad\text{to}\quad
\{|z|=|a|/r\}.
\]
It can therefore preserve individually at most one circle of the
family, namely the circle satisfying $r^2=|a|$. Since the family
$\{\gamma_j\}$ is infinite and every member is preserved individually
by every element of $K$, no such element can belong to $K$. Therefore
\[
K\subset\{z\mapsto e^{it}z\}.
\]
Consequently
\[
\theta=\frac{dz}{z}
\]
is invariant under $K$, and hence under $G$. Pulling $\theta$ back by
the initial M\"obius conjugacy gives the required nonzero rational
$G$-invariant $1$-form on $\PP^1$. Since every meromorphic $1$-form on
a Riemann surface is closed, the proof is complete.
\end{proof}

\begin{proof}[Proof of Theorem~\ref{thm:separated-leviflats-closed-form}]
Let
\[
G\subset\operatorname{PGL}(2,\CC)
\]
be the projective monodromy group of the analytically continued minimal
transverse parameter. For each $j$, choose the connected component of the local transverse
trace specified in the continuation hypothesis and denote its continued
image by
\[
\gamma_j\subset\PP^1.
\]
By assumption, the sets $\gamma_j$ are connected, compact, pairwise
disjoint real-analytic subsets of pure real dimension one, and each of them
is preserved by every element of $G$.

By Lemma~\ref{lem:mobius-separated-curves}, there exists a nonzero
rational $1$-form $\theta$ on $\PP^1$ such that
\[
A^*\theta=\theta
\qquad\text{for every }A\in G.
\]
Choose local branches $F_\alpha$ of the analytically continued minimal
parameter and define
\(
\eta_\alpha=F_\alpha^*\theta.
\)
If
\(
F_\beta=A_{\beta\alpha}\circ F_\alpha,
\qquad A_{\beta\alpha}\in G,
\)
then
\(
\eta_\beta
 =F_\alpha^*(A_{\beta\alpha}^*\theta)
 =F_\alpha^*\theta
 =\eta_\alpha.
\)
Thus the local forms glue on $\PP^2\setminus\Sing(\F)$ to a
single meromorphic $1$-form $\eta$ defining $\F$ there. Since
$\Sing(\F)$ is finite, $\eta$ extends meromorphically across it.
On the projective surface $\PP^2$, every meromorphic $1$-form is
rational. Moreover $d\theta=0$ on the parameter sphere, hence
$d\eta=0$. Therefore $\F$ is defined by a nonzero closed rational
$1$-form.
\end{proof}

\begin{remark}[One relevant dicritical point]
\label{cor:separated-one-dicritical}
In Theorem~\ref{thm:separated-leviflats-closed-form}, suppose that $p$
is the only dicritical singularity involved in the separated family.
Then no compatibility condition between different dicritical points is
needed: it is enough that one minimal local parameter at $p$ continue
projectively throughout $\PP^2\setminus\Sing(\F)$ and that the associated
connected transverse parameter sets satisfy the separation and individual
invariance hypotheses of Theorem~\ref{thm:separated-leviflats-closed-form}.
The conclusion is again that $\F$ is defined by a nonzero closed rational
$1$-form.
\end{remark}

\section{Singular Levi-flats on Stein manifolds}
\label{sec:stein}

The semi-global argument is not specific to projective space. In this
section we formulate its Stein-manifold version and indicate the
additional Hartogs hypothesis under which the first integral becomes
global.

\begin{theorem}[Stein-manifold version]
\label{thm:stein}
Let $X$ be a Stein manifold of complex dimension $n\ge3$, let $\F$ be
a codimension-one holomorphic foliation on $X$, and let $M\subset X$
be a connected irreducible real-analytic variety of pure real codimension one 
invariant by $\F$. Assume:

\begin{enumerate}[label=\textup{(\roman*)}]
\item
\[
\Sing(M)\subset\Sing(\F);
\]

\item all dicritical singularities of $\F$ contained in $M$ belong to
an irreducible codimension-two analytic subset
\[
D\subset M\cap\Sing(\F),
\]
and $D$ is generically of local product type;

\item there exists a pencil singularity for $M$ in the local-product
locus of $D$;

\item the generic transverse parameter along $D^\circ$ has no
transverse degree jump along $D$ in the sense of
Definition~\ref{def:no-degree-jump}.
\end{enumerate}

Then there exist an open neighborhood $N\supset M$ and a nonconstant
meromorphic first integral
\[
G\in\Mcal(N)
\]
of $\F|_N$.

If, in addition, there exists a compact set $K\subset X$ such that
\[
X\setminus K\subset N
\qquad\text{and}\qquad
X\setminus K\ \text{is connected},
\]
then $G$ extends meromorphically to $X$, and $\F$ admits a nonconstant
global meromorphic first integral on $X$.
\end{theorem}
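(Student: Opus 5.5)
The first assertion is the Stein analogue of Theorem~\ref{thm:semiglobal}, and I would prove it by rerunning the same three-step continuation argument on $X$. None of those steps used compactness or projectivity, only local facts on an arbitrary connected complex manifold.

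First, Proposition~\ref{prop:nearMreg}, applied at the pencil singularity of hypothesis (iii), gives a single-valued meromorphic first integral $G$ on a neighborhood $N_{\reg}$ of $M_{\reg}$. Second, Lemma~\ref{lem:monodromy} shows that the projective monodromy of the transverse parameter along $D^\circ$ is trivial. Proposition~\ref{prop:dicritical-extension} then carries $G$ through $D^\circ$, and, using the no-degree-jump hypothesis (iv), through all of $D$. Third, every point of $M\cap\Sing(\F)$ outside $D$ is non-dicritical by (ii), so Lemma~\ref{lem:nondicritical} extends $G$ across it. Since $\Sing(M)\subset\Sing(\F)$, these local extensions cover $M$. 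Uniqueness of meromorphic continuation glues them into $G\in\Mcal(N)$ on an open neighborhood $N\supset M$, and $G$ is nonconstant because it restricts to the nonconstant parameter $F_p$ near $p$.

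One point needs care: the glued object must be single-valued on $N$. Overlaps of the local extension domains with $N_{\reg}$ are nonempty, and each local extension agrees with $G$ there. I would therefore shrink the local domains so that each of them meets $N_{\reg}$ in a connected set; then uniqueness of meromorphic continuation gives consistency on all pairwise overlaps. This is exactly the gluing step of Theorem~\ref{thm:semiglobal}, so I would simply cite that proof verbatim.

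For the global statement I would apply the meromorphic Hartogs extension theorem \cite[Theorem~2.2]{MerkerPorten} directly on the Stein manifold $X$, which has $\dim X\ge 3\ge 2$. The set $X\setminus K$ is connected and contained in $N$, so $G|_{X\setminus K}$ is meromorphic there and extends to some $\widetilde G\in\Mcal(X)$. The identity $d\widetilde G\wedge\omega=0$ holds on the open set $X\setminus K$. By the identity principle on the connected manifold $X$ it therefore holds everywhere, so $\widetilde G$ is a nonconstant global meromorphic first integral.

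I expect the main obstacle to be checking that the hypotheses of the Hartogs theorem are exactly those stated. The theorem needs $K$ compact with $X\setminus K$ connected, and this is assumed, so there is no need for the one-end argument of Lemma~\ref{lem:boundary-hartogs}.
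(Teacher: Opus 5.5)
Your proposal is correct and follows essentially the same route as the paper. You obtain the first assertion by rerunning the semi-global argument of Theorem~\ref{thm:semiglobal}, which never used compactness, and the global statement by applying the meromorphic Hartogs theorem directly to $G|_{X\setminus K}$ on the Stein manifold $X$ and then propagating $d\widetilde G\wedge\omega=0$ by analytic continuation, without the one-end argument of Lemma~\ref{lem:boundary-hartogs}. Your extra gluing remark is a refinement the paper leaves implicit, but connectedness of each $U_q\cap N_{\reg}$ does not by itself give consistency on every component of a pairwise overlap $U_q\cap U_{q'}$ (a component missing $N_{\reg}$ is uncontrolled); a final shrinking of $N$ around $M$ fixes this.
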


\begin{proof}[Proof of Theorem~\ref{thm:stein}]
The semi-global argument used in the proof of Theorem~\ref{thm:semiglobal} gives a
nonconstant meromorphic first integral
\[
G\in\Mcal(N)
\]
on a neighborhood $N\supset M$.

Assume now that there is a compact set $K\subset X$ such that
\[
X\setminus K\subset N
\qquad\text{and}\qquad
X\setminus K\ \text{is connected}.
\]
The restriction of $G$ to $X\setminus K$ extends meromorphically
through $K$ by the meromorphic Hartogs extension theorem on the Stein
manifold $X$. We obtain
\[
\widetilde G\in\Mcal(X).
\]
By uniqueness of meromorphic continuation, $\widetilde G$ agrees with
$G$ wherever both are defined. Since
\[
dG\wedge\omega=0
\]
near $M$, analytic continuation gives
\[
d\widetilde G\wedge\omega=0
\]
wherever $\omega$ defines $\F$. Thus $\widetilde G$ is a nonconstant
global meromorphic first integral of $\F$.
\end{proof}

\section{The case of several dicritical singularities}
\label{sec:compatible-dicritical}

We conclude with variants in which several dicritical singularities carry
compatible local meromorphic parameters. The same compatibility mechanism
produces a semi-global first integral and yields the corresponding
projective consequences, including the proof of the plane characterization.

\subsection{The higher-dimensional case}
\label{subsec:higher-dim-dicritical}

\begin{corollary}[All singularities on $M$ dicritical]
\label{cor:higher-dimensional-dicritical}
Let $\F$ be a codimension-one holomorphic foliation on $\PP^n$,
$n\ge3$, and let
\[
M\subset\PP^n
\]
be an irreducible real-analytic variety of pure real codimension one  invariant by
$\F$. Assume that
\[
\Sing(M)\subset\Sing(\F)
\]
and that every singularity of $\F$ contained in $M$ is dicritical.

Choose a finite collection of neighborhoods
\(
U_1,\ldots,U_r
\)
covering $\Sing(\F)\cap M$, and, after shrinking if necessary, choose
on each $U_j$ a minimal meromorphic first integral
\(
H_j:U_j\dashrightarrow\PP^1
\)
for $\F|_{U_j}$.

Assume that whenever a generic global Levi leaf
$L\subset M_{\reg}$ enters a sufficiently small neighborhood of
$\Sing(\F)\cap M$ contained in $U_j$, all the corresponding local
branches of $L$ determine the same value
\[
\lambda_j(L)\in\PP^1
\]
of $H_j$. Assume moreover that the local parameters are projectively
compatible: there exist transformations
\[
A_{ji}\in\operatorname{PGL}(2,\CC)
\]
depending only on the pair $(i,j)$, and not on the Levi leaf, such that
whenever a generic global Levi leaf $L$ is represented in both
$U_i$ and $U_j$ one has
\[
\lambda_j(L)=A_{ji}\bigl(\lambda_i(L)\bigr),
\]
and the maps $A_{ji}$ satisfy
\[
A_{ii}=\operatorname{id},
\qquad
A_{kj}\circ A_{ji}=A_{ki}
\]
whenever all terms are defined.

Then $\F$ admits a nonconstant rational first integral on $\PP^n$.
\end{corollary}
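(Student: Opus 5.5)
The plan is to follow the proof of Theorem~\ref{thm:projective-main}. Replace the single pencil singularity by the cocycle $\{A_{ji}\}$: first build a meromorphic first integral on a neighborhood of $M$, then cut by a generic plane and globalize there.

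\textbf{Step 1: normalize the local parameters.} Fix an index $i_0$ and set
\[
\widehat H_j=A_{i_0j}\circ H_j,\qquad A_{i_0j}=A_{ji_0}^{-1}.
\]
The cocycle relations $A_{ii}=\operatorname{id}$ and $A_{kj}\circ A_{ji}=A_{ki}$ then give
\[
\widehat H_j(L)=\widehat H_i(L)
\]
for every generic global Levi leaf $L$ represented in both $U_i$ and $U_j$.

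\textbf{Step 2: continue along $M_{\reg}$.} Repeat the proof of Proposition~\ref{prop:nearMreg}, continuing from whichever $U_j$ the Levi leaf $L_x$ enters. Since $\Sing(M)\subset\Sing(\F)$, every Levi leaf accumulates somewhere on $\Sing(\F)\cap M\subset\bigcup_jU_j$. (If $M_{\reg}\cap\Sing(\F)\ne\varnothing$, that portion is also covered by the $U_j$.)

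Two continuations along paths ending in $U_i$ and in $U_j$ give values $\widehat H_i(L)$ and $\widehat H_j(L)$ on the same generic leaf $L$. These values agree: by the single-value hypothesis when $i=j$, and by Step~1 when $i\ne j$. So the two germs coincide on the real curve $\Sigma_x\cap M_{\reg}$, hence on $\Sigma_x$ by the identity principle.

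The outcome is a meromorphic first integral $G$ on a neighborhood $N_{\reg}$ of $M_{\reg}$. On each $U_j\cap N_{\reg}$ it agrees with $\widehat H_j$, because both are first integrals taking the same values on generic leaves. Since every singular point of $\F$ on $M$ lies in some $U_j$, where $\widehat H_j$ is meromorphic, $G$ extends to a neighborhood $N$ of $M$. In particular, no local-product or degree-jump hypothesis is needed, because the $H_j$ are already given on full neighborhoods.

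\textbf{Step 3: cut by a generic plane.} Choose a generic plane $P\simeq\PP^2$ satisfying conditions (b) and (c) from the proof of Theorem~\ref{thm:projective-main}. Then $G_P=G|_{P\cap N}$ is a nonconstant meromorphic first integral of $\F_P$ near $M_P=M\cap P$. It is nonconstant because $P$ is transverse to $M_{\reg}$, so the Levi leaves cut $P$ in curves carrying distinct values.

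For each component $\Omega$ of $P\setminus M_P$:
\begin{itemize}
\item Proposition~\ref{prop:pseudoconvex} applies, since the $\widehat H_j$ and $G$ give local meromorphic first integrals near every point of $M$;
\item Takeuchi's theorem then makes $\Omega$ Stein;
\item Lemma~\ref{lem:boundary-hartogs} extends $G_P$ across $\Omega$.
\end{itemize}
Gluing these extensions gives a rational first integral of $\F_P$. The extension statement of \cite[Theorem~3.25]{CerveauLinsNetoStructural} then lifts it to $\PP^n$. Alternatively, one can avoid the plane and argue directly on $\PP^n$: take components $\Omega$ of $\PP^n\setminus M$, which are Stein by Takeuchi, and apply Lemma~\ref{lem:boundary-hartogs} there.

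\textbf{Main obstacle.} I expect the hardest step to be the well-definedness in Step~2 when a leaf visits several $U_j$, or the same $U_j$ through several branches. The cocycle and single-value hypotheses are tailored to this, but care is needed for two reasons:
\begin{itemize}
\item The ``generic'' leaves must form a subset of $\Sigma_x\cap M_{\reg}$ with an accumulation point. This should hold because the non-generic leaves form a thin set.
\item Continuations along different paths produce compositions of the $A_{ji}$ around cycles of the nerve of the cover. The relation $A_{kj}\circ A_{ji}=A_{ki}$ reduces these to the identity.
\end{itemize}
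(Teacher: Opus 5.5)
There is a genuine gap at the start of your Step~2. The claim ``since $\Sing(M)\subset\Sing(\F)$, every Levi leaf accumulates somewhere on $\Sing(\F)\cap M$'' does not follow. The inclusion only tells you that $M\setminus\Sing(\F)\subset M_{\reg}$; it says nothing about whether a given Levi leaf $L$ approaches a singular point. A priori $\overline L$ could be a compact $\F$-invariant subset of $M_{\reg}\setminus\Sing(\F)$, so $L$ would never enter $U_1\cup\cdots\cup U_r$.

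The constancy hypothesis of the statement is conditional on $L$ entering some $U_j$. For such a leaf you would therefore have no parameter to transport, and the holonomy construction of Proposition~\ref{prop:nearMreg} would not define $G$ near it. The set of points of $M_{\reg}$ whose leaf enters $\bigcup_j U_j$ is open and saturated, but nothing in your argument shows it is all of $M_{\reg}$. Its complement is exactly a compact invariant set avoiding $\Sing(\F)$. Your ``main obstacle'' paragraph locates the difficulty in well-definedness, which the cocycle hypotheses do handle; the real difficulty is this accumulation step.

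The missing ingredient, and the one the paper invokes, is Lins Neto's theorem \cite{LinsNeto1999}: a codimension-one holomorphic foliation on $\PP^n$, $n\ge3$, has no nontrivial minimal set. Since $\overline L\subset M$ is compact and invariant, it contains a minimal set, and that minimal set must meet $\Sing(\F)$. Hence $\overline L\cap\Sing(\F)\cap M\neq\varnothing$, and $L$ enters the small neighborhoods inside some $U_j$.

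This is precisely where $n\ge3$ is used. Compare Corollary~\ref{cor:finite-dicritical-plane} and Theorem~\ref{thm:plane-characterization}, where in $\PP^2$ the accumulation of every Levi leaf at a dicritical point must be assumed explicitly. A warning sign in your proposal is that your direct version on $\PP^n$ (the second option in Step~3) never uses $n\ge3$ at all.

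Once accumulation is established, the rest of your plan goes through at the paper's level of detail. Your direct globalization over the components of $\PP^n\setminus M$, via Proposition~\ref{prop:pseudoconvex}, Takeuchi \cite{Takeuchi} and Lemma~\ref{lem:boundary-hartogs}, is exactly the paper's route. The plane-section variant also works, but it adds genericity checks and requires the extension theorem of \cite{CerveauLinsNetoStructural}, with no gain.
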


\begin{proof}[Proof of Corollary~\ref{cor:higher-dimensional-dicritical}]
Let $L\subset M_{\reg}$ be a Levi leaf. Its closure
\(
\overline L\subset M\subset\PP^n
\)
is a nonempty compact $\F$-invariant set. By a theorem of
Lins Neto on nontrivial minimal sets in projective space
\cite{LinsNeto1999}, a codimension-one holomorphic foliation on
$\PP^n$, $n\ge3$, has no nontrivial minimal set. Consequently
$\overline L$ meets $\Sing(\F)$. Since every singularity of $\F$
contained in $M$ is dicritical, every Levi leaf accumulates on the
dicritical locus in $M$.

The finite family $U_1,\ldots,U_r$ therefore meets the accumulation
set of every Levi leaf. The constancy assumption allows the local
parameters $H_j$ to be transported along the Levi foliation, and the
M\"obius compatibility and cocycle relations identify the resulting
parameters on overlaps. Thus the same holonomy argument as in
Proposition~\ref{prop:nearMreg} produces a single-valued meromorphic
first integral on a neighborhood of $M_{\reg}$.

Each $H_j$ supplies the extension through the singular set covered by
$U_1,\ldots,U_r$. Hence the first integral is meromorphic on a
neighborhood $N$ of all of $M$. For each connected component $\Omega$
of $\PP^n\setminus M$, Proposition~\ref{prop:pseudoconvex} and
Takeuchi's theorem imply that $\Omega$ is Stein. Applying
Lemma~\ref{lem:boundary-hartogs} to $G|_{N\cap\Omega}$ and shrinking $N$
near $M$ if necessary extends the first integral through every
complementary component. The resulting global meromorphic function on
$\PP^n$ is necessarily rational.
\end{proof}

\subsection{A finite dicritical set in the projective plane}
\label{subsec:finite-dicritical-plane}

For the plane we use the same local information. Namely, each dicritical
singularity $p_j$ of $\F$ contained in an invariant real-analytic
hypersurface admits, by Cerveau--Lins Neto and Brunella, a nonconstant
meromorphic first integral for the germ $(\F,p_j)$. For each
$j=1,\ldots,r$, choose a local meromorphic first integral
\(
H_j:(U_j,p_j)\dashrightarrow\PP^1
\)
of the germ $(\F,p_j)$.

\begin{corollary}[Finite dicritical set in $\PP^2$]
\label{cor:finite-dicritical-plane}
Let $\F$ be a codimension-one holomorphic foliation on $\PP^2$, and
let
\(
M\subset\PP^2
\)
be an irreducible real-analytic variety of pure real codimension one  invariant by $\F$. Assume
that
\(
\Sing(M)\subset\Sing(\F).
\)
Let $p_1,\ldots,p_r\in M\cap\Sing(\F)$ be all the dicritical singularities
and, for each $j$, let
\[
H_j:(U_j,p_j)\dashrightarrow\PP^1
\]
be a local meromorphic first integral of the germ
$(\F,p_j)$. Assume that every Levi leaf $L\subset M_{\reg}$
accumulates at least one of the points $p_j$.

Assume that, for every $j$ and every generic global Levi leaf $L$
approaching $p_j$, the restriction $H_j|_{L\cap U_j}$ is constant;
denote its value by
\[
\lambda_j(L)\in\PP^1.
\]
Assume moreover that there exist transformations
\[
A_{ji}\in\operatorname{PGL}(2,\CC)
\]
depending only on the pair $(i,j)$, and not on the Levi leaf, such that
whenever a generic global Levi leaf $L$ accumulates at both $p_i$
and $p_j$ one has
\[
\lambda_j(L)=A_{ji}\bigl(\lambda_i(L)\bigr),
\]
and that the transformations satisfy
\[
A_{ii}=\operatorname{id},
\qquad
A_{kj}\circ A_{ji}=A_{ki}
\]
whenever all terms are defined.

Then $\F$ admits a nonconstant rational first integral on $\PP^2$.
\end{corollary}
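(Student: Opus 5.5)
The argument follows the proof of Corollary~\ref{cor:higher-dimensional-dicritical}, with the plane playing the role of $\PP^n$. The accumulation of Levi leaves on the dicritical points is now a hypothesis, so Lins Neto's minimal-set theorem is not needed. The plan has three steps.

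\emph{First step: a first integral near $M_{\reg}$.} Fix $x\in M_{\reg}$ with Levi leaf $L_x$. By hypothesis $L_x$ accumulates at some $p_i$, so it meets $U_i$. Transport $H_i$ by holonomy along a path in $L_x$ from $x$ to a regular point of $L_x\cap U_i$, exactly as in Proposition~\ref{prop:nearMreg}. This gives a local first integral $G_{\gamma,i}$ on a flow box at $x$. Independence of choices has to be checked in two cases.
\begin{itemize}
\item \emph{Two paths ending in the same $U_i$.} Hypothesis (a) makes the values agree on the generic points of $\Sigma_x\cap M_{\reg}$.
\item \emph{Paths ending in $U_i$ and in $U_j$.} Normalize the target charts once and for all by replacing $H_j$ with $A_{1j}\circ H_j$. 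This is legitimate for the indices reachable from a base index, using the cocycle identity. Then (c) gives $\lambda_j(L)=A_{ji}\lambda_i(L)$, so the normalized values coincide on generic Levi leaves.
\end{itemize}
In both cases the two continuations are one-variable meromorphic functions on $\Sigma_x$ that agree on a real-analytic arc. They therefore coincide, and we obtain $G\in\Mcal(N_{\reg})$.

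\emph{Second step: crossing the singular points.} At each $p_j$ the normalized $H_j$ agrees with $G$ on an open set, so it extends $G$ through $p_j$. Every other point of $M\cap\Sing(\F)$ is non-dicritical, and Lemma~\ref{lem:nondicritical} extends $G$ through it; its proof is dimension-free. Since $\Sing(M)\subset\Sing(\F)$, this covers all of $M$, and $G$ is meromorphic on a neighborhood $N$ of $M$.

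\emph{Third step: globalization.} Take a component $\Omega$ of $\PP^2\setminus M$. Proposition~\ref{prop:pseudoconvex} applies, because local first integrals exist at every point of $M$. So $\Omega$ is locally pseudoconvex, and Takeuchi's theorem makes $\Omega$ Stein. Lemma~\ref{lem:boundary-hartogs} then extends $G$ across $\Omega$. Gluing over the finitely many components gives a global meromorphic function on $\PP^2$, which is rational by Chow/GAGA. It is nonconstant and satisfies $dG\wedge\omega=0$ by analytic continuation.

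\emph{Main obstacle.} The delicate point is the first step when the index set is not connected under the relation "some generic leaf accumulates at both $p_i$ and $p_j$". In that case the $A_{ji}$ are only partially defined. I would handle it by normalizing separately within each class. Leaves accumulating on points of different classes do not exist by definition, so each leaf only ever sees one class. The families of leaves attached to different classes cover $M_{\reg}$, and irreducibility of $M$ should force a single class, because the continuation from one class is real-analytic on a saturated open-closed subset of $M_{\reg}$. If necessary I would argue that $G$ is already well-defined on each saturated piece, and that pieces meet only along their closures, where agreement on generic leaves again forces identity. A secondary subtlety is that the $H_j$ need not be minimal. The identity principle on $\Sigma_x$ only needs the values to agree, so minimality is not required.
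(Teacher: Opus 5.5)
Your proposal is correct at the paper's level of detail and follows its proof step by step. Holonomy continuation of the $H_j$, normalized through the cocycle $A_{ji}$, gives $G$ near $M_{\reg}$; the normalized $H_j$ carry $G$ through the $p_j$, and Lemma~\ref{lem:nondicritical} carries it through the remaining nondicritical points; Proposition~\ref{prop:pseudoconvex}, Takeuchi's theorem and Lemma~\ref{lem:boundary-hartogs} then globalize it to a rational function.

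The paper never addresses your ``main obstacle'' (it simply asserts that the $A_{ji}$ identify the continuations on overlaps), and your sketched fix does not settle it as written. Irreducibility of $M$ does not by itself make $M_{\reg}$ connected. The saturated set swept by one class need not be closed. Along a frontier between two classes there is, by definition, no generic leaf seeing both, so there is no agreement on generic leaves to invoke.
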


\begin{proof}[Proof of Corollary~\ref{cor:finite-dicritical-plane}]
By hypothesis every Levi leaf accumulates at least one of the
dicritical points $p_1,\ldots,p_r$. Starting from a local first
integral $H_j$ at any such point reached by a given leaf, the
constancy condition makes the holonomy continuation single-valued
near that leaf. The projective compatibility maps $A_{ji}$ and their
cocycle relations identify these local continuations whenever their
domains overlap. They therefore glue to a meromorphic first integral
on a neighborhood of $M_{\reg}$.

At each $p_j$ the corresponding normalized local first integral
extends the glued function meromorphically through $p_j$. Any
remaining singular point of $\F$ on $M$ is non-dicritical and is
treated by Lemma~\ref{lem:nondicritical}. Thus the first integral is
meromorphic on a neighborhood of all of $M$.

The local pseudoconvexity argument of Section~\ref{sec:pseudoconvexity} and the meromorphic
extension argument on the complementary Stein domains globalize this
function to $\PP^2$. The resulting global meromorphic function is
rational.
\end{proof}

\subsection{The case of a single dicritical singularity}
\label{subsec:single-dicritical-plane}

\begin{proof}[Proof of Corollary~\ref{cor:single-dicritical-plane}]
This is the special case of Corollary~\ref{cor:finite-dicritical-plane} with $r=1$.
Equivalently, Proposition~\ref{prop:nearMreg} gives the first integral
near $M_{\reg}$, the local first integral at $p$ supplies the
extension at the only dicritical singularity, and
Lemma~\ref{lem:nondicritical} treats the remaining singular points.
The projective extension argument then gives a rational first
integral.
\end{proof}

\begin{example}
The elementary rational pencil
\(
R=\frac{z_0}{z_1}
\)
on $\PP^2$ preserves
\(
M=\overline{R^{-1}(\mathbb RP^1)}.
\)
The unique dicritical point is
\(
p=[0:0:1],
\)
and the Levi leaves are
\[
z_0-\lambda z_1=0,\qquad \lambda\in\mathbb RP^1.
\]
Thus Corollary~\ref{cor:single-dicritical-plane} applies.
\end{example}

\subsection{Characterization of primitive rational pencils on \texorpdfstring{$\PP^2$}{P2}}
\label{subsec:plane-characterization}

\begin{proof}[Proof of Theorem~\ref{thm:plane-characterization}]
Assume first that $\F$ admits a primitive rational first integral
\(
R:\PP^2\dashrightarrow\PP^1.
\)
Choose a smooth real-analytic Jordan curve
\(
\gamma\subset\PP^1
\)
of regular values of the resolved rational map and set
\(
M=\overline{R^{-1}(\gamma)}.
\)
The generic fiber is connected and, over regular values of the resolved
map, smooth; hence the corresponding Levi leaves are the connected
fibers over $\gamma$. The hypersurface $M$ is invariant by $\F$ and is
smooth away from the base locus of the pencil. Let
$p_1,\ldots,p_r$ be all dicritical base points of $R$ belonging to $M$.
For each $j$, take
\(
H_j=R|_{U_j}:(U_j,p_j)\dashrightarrow\PP^1.
\)
These local meromorphic first integrals need not be minimal; this is why
minimality is not imposed in condition \textup{(ii)}. On every generic
global Levi leaf $L$, each $H_j$ is constant wherever $L$ approaches
$p_j$, with value equal to the global value $R(L)$. Thus we may take
\(
A_{ji}=\operatorname{id}
\)
(after using the same global coordinate on the target $\PP^1$ for all
$j$), and all compatibility conditions in \textup{(ii)} follow.

Conversely, assume \textup{(ii)}. The local constancy and projective
compatibility hypotheses are precisely the hypotheses required by
Corollary~\ref{cor:finite-dicritical-plane}; its proof uses only the
chosen local meromorphic first integrals and does not require their
minimality. Hence $\F$ admits a nonconstant rational first integral
on $\PP^2$. Replacing it by the primitive generator furnished by the
Stein factorization of the associated rational map gives a primitive
rational first integral.
\end{proof}

\section*{Declarations}

\noindent\textbf{Funding.} The author has been partially supported by Funda\c{c}\~ao Get\'ulio Vargas through its School of Applied Mathematics (EMAp).

\noindent\textbf{Competing interests.} The author declares no competing interests.

\noindent\textbf{Declaration of AI use.} The author used ChatGPT (OpenAI) to assist with tasks as: language editing, organization, and checking the clarity of the manuscript.  The author reviewed and verified the final text, including all mathematical statements, proofs, and references, and takes full responsibility for its content.


\begin{thebibliography}{99}

\bibitem{BarrettForness}
D.~E.~Barrett and J.~E.~Forn{\ae}ss,
\emph{On the smoothness of Levi-foliations},
Publ. Mat. \textbf{32} (1988), no.~2, 171--177.


\bibitem{Brunella}
M.~Brunella,
\emph{Some remarks on meromorphic first integrals},
Enseign. Math. \textbf{58} (2012), no.~3--4, 315--324.




\bibitem{CAS}
O.~Calvo-Andrade and M.~G.~Soares,
\emph{Chern numbers of a Kupka component},
Ann. Inst. Fourier (Grenoble) \textbf{44} (1994), no.~4, 1219--1236.


\bibitem{CamachoLinsNeto}
C.~Camacho and A.~Lins Neto,
\emph{Geometric Theory of Foliations},
Birkh\"auser Boston, Boston, MA, 1985.



\bibitem{CM}
D.~Cerveau and J.-F.~Mattei,
\emph{Formes int\'egrables holomorphes singuli\`eres},
Ast\'erisque \textbf{97} (1982).

\bibitem{CamachoLinsNetoSad}
C.~Camacho, A.~Lins Neto and P.~Sad,
\emph{Foliations with algebraic limit sets},
Ann. of Math. (2) \textbf{136} (1992), no.~2, 429--446.

\bibitem{CLN}
D.~Cerveau and A.~Lins Neto,
\emph{Local Levi-flat hypersurfaces invariants by a codimension one
holomorphic foliation},
Amer. J. Math. \textbf{133} (2011), no.~3, 677--716.

\bibitem{CLNKupka}
D.~Cerveau and A.~Lins Neto,
\emph{Codimension one foliations in $\mathbb{CP}^n$, $n\ge3$, with
Kupka components},
Ast\'erisque \textbf{222} (1994), 93--133.

\bibitem{DummitFoote}
D.~S.~Dummit and R.~M.~Foote,
\emph{Abstract Algebra}, 3rd ed.,
John Wiley \& Sons, 2004.

\bibitem{LinsNeto1999}
A.~Lins Neto,
\emph{A note on projective Levi flats and minimal sets of algebraic
foliations},
Ann. Inst. Fourier (Grenoble) \textbf{49} (1999), no.~4, 1369--1385.

\bibitem{LNproduct}
A.~Lins Neto,
\emph{Local transversely product singularities},
Ann. Henri Lebesgue \textbf{4} (2021), 485--503.

\bibitem{FMRdyn}
A.~Fern\'andez-P\'erez, R.~Mol and R.~Rosas,
\emph{On the dynamics of foliations in $\mathbb P^n$ tangent to
Levi-flat varieties},
preprint, arXiv:1403.4802.

\bibitem{FMRsing}
A.~Fern\'andez-P\'erez, R.~Mol and R.~Rosas,
\emph{On singular real analytic Levi-flat foliations},
Asian J. Math. \textbf{24} (2020), no.~6, 1007--1028.

\bibitem{FMRchow}
A.~Fern\'andez-P\'erez, R.~Mol and R.~Rosas,
\emph{Chow's theorem for real analytic Levi-flat hypersurfaces},
Bull. Sci. Math. \textbf{179} (2022), 103169.

\bibitem{GalindoMonserrat}
C.~Galindo and F.~Monserrat,
\emph{The Poincar\'e problem, algebraic integrability and dicritical divisors},
J. Differential Equations \textbf{256} (2014), no.~11, 3614--3633.

\bibitem{Jouanolou}
J.-P.~Jouanolou,
\emph{Equations de Pfaff alg\'ebriques},
Lecture Notes in Mathematics, vol.~708,
Springer-Verlag, Berlin, 1979.

\bibitem{HM}
H.~Hironaka and H.~Matsumura,
\emph{Formal functions and formal embeddings},
J. Math. Soc. Japan \textbf{20} (1968), no.~1--2, 52--82.

\bibitem{MR}
J.~Martinet and J.-P.~Ramis,
\emph{Classification analytique des equations differentielles non
lineaires resonnantes du premier ordre},
Ann. Sci. Ecole Norm. Sup. (4) \textbf{16} (1983), no.~4, 571--621.

\bibitem{MerkerPorten}
J.~Merker and E.~Porten,
\emph{The Hartogs extension theorem on $(n-1)$-complete complex spaces},
J. Reine Angew. Math. \textbf{637} (2009), 23--39.

\bibitem{Paul1999}
E.~Paul,
\emph{Feuilletages holomorphes singuliers \`a holonomie r\'esoluble},
J. Reine Angew. Math. \textbf{514} (1999), 9--70.

\bibitem{Sad1995}
P.~Sad,
\emph{Dicritical singularities in foliations with algebraic limit sets},
Bol. Soc. Brasil. Mat. (N.S.) \textbf{26} (1995), no.~1, 47--56.

\bibitem{GR}
H.~Grauert and R.~Remmert,
\emph{Coherent Analytic Sheaves},
Grundlehren der mathematischen Wissenschaften, vol.~265,
Springer-Verlag, Berlin, 1984.



\bibitem{SerreDuality}
J.-P.~Serre,
\emph{Un th\'eor\`eme de dualit\'e},
Comment. Math. Helv. \textbf{29} (1955), 9--26.

\bibitem{Takeuchi}
A.~Takeuchi,
\emph{Domaines pseudoconvexes infinis et la m\'etrique riemannienne
dans un espace projectif},
J. Math. Soc. Japan \textbf{16} (1964), 159--181.


\bibitem{CamachoScarduaLiouvillian}
C.~Camacho and B.~Azevedo Sc\'ardua,
\emph{Holomorphic foliations with Liouvillian first integrals},
Ergodic Theory Dynam. Systems \textbf{21} (2001), no.~3, 717--756.

\bibitem{ScarduaBoundedGeometry}
B.~C.~A.~Sc\'ardua,
\emph{Complex vector fields having orbits with bounded geometry},
Tohoku Math. J. (2) \textbf{54} (2002), no.~3, 367--392.

\bibitem{CerveauLinsNetoStructural}
D.~Cerveau and A.~Lins Neto,
\emph{A structural theorem for codimension-one foliations on
$\PP^n$, $n\ge3$, with an application to degree-three foliations},
Ann. Sc. Norm. Super. Pisa Cl. Sci. (5) \textbf{12} (2013), no.~1, 1--41.

\bibitem{LeblSingularSet}
J.~Lebl,
\emph{Singular set of a Levi-flat hypersurface is Levi-flat},
Math. Ann. \textbf{355} (2013), no.~3, 1177--1199.

\end{thebibliography}
\end{document}